\newcommand{\supp}{\mathrm{supp}}
\newcommand{\Csp}{\mathrm{Csp}}
\newcommand{\SP}{\mathrm{SP}}
\newcommand{\singsupp}{\mathrm{singsupp}}
\newcommand{\Css}{\mathrm{Css}}
\newcommand{\WF}{\mathrm{WF}}
\newcommand{\xx}{{\mathbf x}}
\newcommand{\dd}{{\mathrm d}}
\newcommand{\kk}{{\mathbf k}}
\newcommand{\BB}{{\mathbb B}}
\newcommand{\CC}{{\mathbb C}}
\newcommand{\NN}{{\mathbb N}}
\newcommand{\RR}{{\mathbb R}}
\newcommand{\SSS}{{\mathbb S}}
\newcommand{\SG}{{\mathbf{SG}}}
\newcommand{\SGmm}{{\mathbf{SG}^{m,\mu}}}
\newcommand{\SGim}{{\mathbf{SG}^{-\infty,\mu}}}
\newcommand{\SGmi}{{\mathbf{SG}^{m,-\infty}}}
\newcommand{\SGii}{{\mathbf{SG}^{-\infty,-\infty}}}
\newcommand{\SGjm}{{\mathbf{SG}^{\infty,\mu}}}
\newcommand{\SGmj}{{\mathbf{SG}^{m,\infty}}}
\newcommand{\SGjj}{{\mathbf{SG}^{\infty,\infty}}}
\newcommand{\ZZ}{{\mathbb Z}}
\newcommand{\Sm}{\mathcal{C}^\infty}
\newcommand{\Sw}{{\mathcal {S}}}
\newcommand{\Swd}{\Sw^\prime}
\newcommand{\Dist}{{\mathcal {D}^\prime}}
\DeclareMathOperator{\sspan}{\mathrm{span}}
\newcommand{\cF}{{\mathcal{F}}}
\def\norm#1{{\langle} #1 {\rangle}}
\newtheorem{theorem}{Theorem}
\newtheorem{lemma}[theorem]{Lemma}
\newtheorem{definition}[theorem]{Definition}
\newtheorem{example}[theorem]{Example}
\newtheorem{proposition}[theorem]{Proposition}
\newtheorem{corollary}[theorem]{Corollary}
\numberwithin{equation}{section}         
\numberwithin{theorem}{section}
\theoremstyle{remark}
\newtheorem{remark}[theorem]{Remark}
\author{S. Coriasco}
\address{Università degli Studi di Torino}
\email{sandro.coriasco@unito.it}
\author{R. Schulz}
\address{Georg-August-Universität Göttingen}
\email{rschulz@uni-math.gwdg.de}
\title{The global wave front set\\ of tempered oscillatory integrals\\ with inhomogeneous phase functions}
\keywords{Wave front set, Oscillatory integral, Two-point function, Fourier integral operator}
\subjclass[2000]{35A18,35S30,35H10}
\begin{document}

\pagestyle{plain}

%\onehalfspacing

\begin{abstract}
	We study certain families of oscillatory integrals $I_\varphi(a)$, parametrised by phase functions $\varphi$
	and amplitude functions $a$ globally defined on $\RR^d$, which give rise to tempered distributions,
	avoiding the standard homogeneity requirement on the phase function. The singularities of $I_\varphi(a)$
	are described both from the point of view of the lack of smoothness as well as with respect to the
	decay at infinity. In particular, the latter will depend on a version of the set of stationary points of $\varphi$,
	including elements lying at the boundary of the radial compactification of $\RR^d$. As applications,
	we consider some properties of the two-point function of a free, massive, scalar relativistic field and
	of classes of global Fourier integral operators on $\RR^d$, with the latter defined in terms
	of kernels of the form $I_\varphi(a)$. 
\end{abstract}

\maketitle

%\tableofcontents

\section*{Contents}

\contentsline {section}{\tocsection {}{0}{Introduction}}{2}{section.0}
\contentsline {section}{\tocsection {}{1}{Preliminary definitions and results}}{4}{section.1}
\contentsline {section}{\tocsection {}{2}{Tempered distributions associated with oscillatory integrals}}{10}{section.2}
\contentsline {section}{\tocsection {}{3}{Singularities of tempered oscillatory integrals}}{14}{section.3}
\contentsline {section}{\tocsection {}{4}{Applications}}{18}{section.4}
\contentsline {section}{\tocsection {}{}{References}}{27}{section*.3}

\setlength{\headheight}{28pt}

\pagestyle{fancy}
\fancyhf{}
\fancyhead[RO,LE]{\thepage}
%\fancyhead[RO,LE]{}
\fancyfoot[LO]{}
\fancyfoot[RE]{}
%\fancyhead[RE]{\slshape\nouppercase{\rightmark}}
%\fancyhead[LO]{\slshape\nouppercase{\leftmark}}
\fancyhead[RE]{S. Coriasco and R. Schulz}
\fancyhead[LO]{Global wave front set of oscillatory integrals}
\renewcommand{\headrulewidth}{0.5pt}

\normalem

% Intro
%
\setcounter{section}{-1}
\section{Introduction}
\label{sec:intro}
In the theory of partial differential equations, an important aspect is the study of the regularity properties of the solutions $u$ of 
\begin{equation}\label{problemform}
Au = f,
\end{equation}
where $A$ is a linear operator and $f$ is a given distribution. When the functional setting is the space of tempered distributions, that is, one assumes $u,f\in\Sw^\prime(\RR^d)$, and $A$ is an elliptic operator with coefficients independent of the base variable $x\in\RR^d$, Fourier's transform methods can easily be applied. This gives, for example, $f\in \Sw(\RR^d)\Rightarrow u\in\Sw(\RR^d)$. The ellipticity assumption can actually be weakened, and a similar conclusion can be obtained when the symbol of $A$ satisfies a suitable hypoellipticity condition.

\par

Of course, the situation gets more complicated when the symbol of $A$ explicitly depends on $x$, as well as when $A$ is not
hypoelliptic, so that, in such cases, \eqref{problemform} and $f\in\Sw(\RR^n)$ in general do not imply
$u\in\Sw(\RR^n)$. It is then interesting to know ``where and how'' $u$ fails to belong to $\Sw(\RR^n)$, or,
for instance, to the weighted Sobolev space
\[
	H^{s,\sigma}(\RR^d)=\{u\in\Sw^\prime(\RR^d)\colon \norm{x}^s\mathcal{F}^{-1}(\norm{.}^\sigma\hat{u})\in L^2(\RR^d)\},
\]
with $\mathcal{F}$ denoting the Fourier's transform.
A convenient way to consider such questions is to use the global wave front sets introduced by R. Melrose 
\cite{mel}, with a different approach given in S. Coriasco and L. Maniccia \cite{coma}, see also, e.g.,
the series of papers by S. Coriasco, K. Johansson and J. Toft \cite{CJT2, CJT1, CJT3} for the
corresponding analysis in the context of modulation spaces.

\par

In the mentioned papers, such wave front sets are used for performing the above regularity investigations 
for pseudodifferential operators as well as for Fourier integral operators defined through symbols belonging
to the so-called $\SG$-classes, see, e.g., \cite{cordes, cor, CaRo, ES, mel, Parenti, Sc} for related
results and investigations, both on $\RR^d$ as well as on (non-compact) manifolds.
These topics have undergone an intense development
in the recent years, involving, among the rest, the study of PDEs, non-commutative traces, spectral asymptotics for
self-adjoint operators (see, e.g., \cite{BaCo1, CoMa12, rodino2} and the references quoted therein).
The results in this paper expand this theory with the spectral analysis of singularities
of certain Lagrangian distributions $I_\varphi(a)$, in global terms. In short, with a phase function
$\varphi$ satisfying suitable ellipticity conditions, and $a\in \SG^{m,\mu}(\RR^d\times \RR^s)$, see Sections \ref{sec:prel}
and \ref{sec:oscidef} below, we define, for any $u\in\Sw(\RR^d)$,
\begin{equation}
	\label{eq:iphi}
	\langle I_\varphi(a), u\rangle = \iint e^{i\varphi(x,\xi)}a(x,\xi)u(x)\,d\xi dx,
\end{equation}
and take care of the local properties of the distributions $I_\varphi(a)$, as well as, at the same time, of their
behaviour at infinity, in the spirit of \cite{mel, CJT1, coma}.

\par

Let us recall some known facts in the context of the global wave front sets that we will consider,
following the approach given\footnote{Since we will not address the
manifold case here, we focus on a standard formulation in terms of the $\SG$ calculus on $\RR^n$. See, e.g., \cite{mel}
for details on the scattering (or $\SG$-)calculus on manifolds.} in \cite{CJT1, coma}. Let $\mathcal B$ be an appropriate
Banach space, or, more generally, an appropriate Fr\'echet space of functions or distributions such that
$\Sw(\RR^d)\subseteq \mathcal B \subseteq \Sw^\prime(\RR^d)$, and let $f$ be a tempered distribution on
$\RR^d$. The global wave front set $\WF _{\mathcal B}(f)$ of $f$ with respect to $\mathcal B$, can be defined as
the union of three components
\begin{equation}\label{globalWFcomponents}
\WF^\psi _{\mathcal B}(f),\quad \WF^e _{\mathcal B} (f)\quad
\text{and}\quad \WF^{\psi e} _{\mathcal B}(f),
\end{equation}
where $\WF^\psi _{\mathcal B}(f)$ agrees with the ``local'' wave front
set which is explained in \cite{CJT1,PTT1}. In the case
$\mathcal B=\Sw(\RR^d)$, then $\WF^\psi_{\mathcal B}(f)$ is the same
as the ``classical'' H\"ormander's wave front set (cf. e.{\,}g. \cite[Sections
8.1--8.3]{hoerm1}). We refer to the wave front sets in
\eqref{globalWFcomponents} as the wave front sets (with respect to
$\mathcal B$) for $f$ of $\psi$-type, $e$-type and $\psi e$-type,
respectively.

\par

Roughly speaking, $\WF^\psi _{\mathcal B} (f)$ contains information
about local singularities with respect to $\mathcal B$ and the
directions of their propagation. The set $\WF^e_{\mathcal B}
(f)$ is essentially the same as $\WF^\psi_{\mathcal B_0}(\widehat f)$
and informs about the directions were the size of $f$ fails to belong to
$\mathcal B$ near infinity. Here $\widehat f$ is the Fourier transform for $f$,
and $\mathcal B_0$ is a Banach or Frech\'et space related to $\mathcal
B$. Finally $\WF^{\psi e}_{\mathcal B}(f)$ informs about those
directions were $f$ oscillates heavily at infinity compared to its size.

\par

Therefore, it might not be surprising that, if $\mathcal B$ is
appropriate, then the union $\WF^e_{\mathcal
B}(f)\bigcup \WF^{\psi e}_{\mathcal B} (f)$, the so-called ``exit
component'' (cf. \cite{coma}), explains where $f$, far away from origin,
fails to belong to $\mathcal B$. Taking into account
that $\WF^\psi _{\mathcal B}(f)=\emptyset$, if and only if $f$ locally
belongs to $\mathcal B$, it follows that the global wave front set
$\WF_{\mathcal B}(f)$ fulfills
\begin{equation}\label{WFequiv}
\WF_{\mathcal B}(f) =\emptyset \quad \Longleftrightarrow \quad f\in {\mathcal B}
\end{equation}
for such $\mathcal B$. In the remainder of the paper, we fix $\mathcal B = \Sw(\RR^d)$,
and we will then omit it completely from the notation. Results similar to those recalled above 
can be achieved also when $\mathcal B$ coincides with the weighted Sobolev space $H^{s,\sigma}(\RR^d)$, 
and, more generally, when $\mathcal B$ is a
(generalised, weighted) modulation space, see \cite{CJT2}.

\par

Here we will follow a slightly different approach, with respect to the one that we just briefly described.
In fact, we will essentially make use of the definition of
wave front space $\widetilde{W}=\partial(\BB^d\times\BB^d)$ given in \cite{cordes}, cfr. also \cite{mel}, as well as of the
concept of elliptic point (at infinity) for a $\SG$-symbol. This is more
convenient in the present context, and allows for more compact formulations
of the assumptions and of the results.
Namely, we will usually not need to distinguish between the three
components of $\WF(f)$ described above, except for those situations where such
a distinction is especially relevant, or anyway worth to be pointed out explicitly.

With this in mind, our main result can be formulated, loosely speaking, as follows:
for ``admissible'' phase function $\varphi$ and amplitude function $a$, one has
\begin{equation}
	\label{eq:main}
	\WF(I_\varphi(a))\subseteq \SP_{\varphi}\, ,
\end{equation}
where $\SP_{\varphi}$ is the (generalized) set of stationary points of $\varphi$ in $\widetilde{W}$
(see Section \ref{sec:wfoscint} below for the precise hypotheses and statement).
In the article by J. Zahn \cite{zahn}, such an analysis of oscillatory integrals is carried out with respect to the 
classical H\"ormander wave front set. Despite the similar inclusion results, the global situation here is more subtle, and
requires additional concepts and investigations to be achieved.

\par

The paper is organized as follows: in Section \ref{sec:prel} we fix the notation, and recall the
definition and basic properties of symbols and pseudodifferential operators
in the $\SG$ classes. Moreover, we describe how it is possible to construct
a tempered distribution with a prescribed global wave front set: indeed, 
in spite of being an ``expected result'' and an essential complement of
the whole picture in the global case, this fact looks to have not been proved
elsewhere, to the best of our knowledge.
In Section \ref{sec:oscidef}, after having described the conditions that the
phase function $\varphi$ and the amplitude function $a$ must satisy to be ``admissible'' in the present
context, we illustrate the definition and the basic properties of $I_\varphi(a)$.
The Section \ref{sec:wfoscint} is devoted to the definition of $\SP_\varphi$
and the proof of the inclusion \eqref{eq:main}. Examples of 
applications of our results are then finally given in Section \ref{sec:app}.

\subsection*{Acknowledgement} 
We are grateful to Profs. D. Bahns, L. Rodino, J. Toft, I. Witt and Dr. J. Zahn, for valuable advice and constructive critisism. This work was supported by the German Research Foundation (Deutsche Forschungsgemeinschaft (DFG)) through the Institutional Strategy of the University of Göttingen, in particular through the research training group GRK 1493 and the Courant Research Center ``Higher Order Structures in Mathematics'', as well as by the German Academic Exchange Service (DAAD) within the framework of a ``DAAD Doktorandenstipendium''.

% Preliminaries
%
\section{Preliminary definitions and results}
\label{sec:prel}
\subsection[Global wave front set of temperate distributions]{Radial compactification of \texorpdfstring{$\RR^d$}{RRd} and global wave front set of temperate distributions}
\label{subsec:prel}
We start by recalling some standard notation and concepts, which we will need in the sequel. In particular, we will make 
use of the procedure called \textit{radial compactification of $\RR^d$}, to be able to properly define ``asymptotics at
infinity''. This will yield the notion of wave front space introduced, e.g., in \cite{cordes,mel}, see also \cite{coma}.

\begin{definition}
\label{not:psir}
Denote by $\langle \cdot\rangle$ the map $\RR^d\rightarrow\RR^d\,\colon x\mapsto\sqrt{1+|x|^2}$.
The \emph{directional compactification} of $\RR^d$ is the topological identification $(\RR^d\ \sqcup\ \SSS^{d-1})\cong\BB^d$,
$\BB^d=\{x\in\RR^d\,\colon |x|\le1\}$, via 
$$\RR^d\rightarrow(\BB^d)^o\,\colon x\mapsto \frac{x}{\langle x\rangle}\mbox{ and }\SSS^{d-1}\cong\partial\BB^d.$$
We call an element of the boundary, $\omega\in\SSS^{d-1}$, an \emph{asymptote} or a(n asymptotic) \emph{direction}. 
\end{definition}

\noindent
Other common notations for the asymptote given by a ray through $x\in\RR^n\setminus\{0\}$ are $\dot{x}=x\infty:=x/|x|\in\SSS^{d-1}$.
The topology of $\BB^d$ can be characterized as follows: let $V\subset\SSS^{d-1}$ open, $R>0$, then $U_{V,R}:=\{x\in\RR^d|\dot{x}\in V,\ |x|>R\}\sqcup V$ is an open set of $\BB^d$. Together with the bounded open sets of $\RR^d$, the sets of type 
$U_{V,R}$ form a basis for the topology of $\BB^d$, and, in particular, we have
$$\partial\left(\BB^d\times\BB^s\right)=\left(\RR^d\times\SSS^{d-1})\cup(\SSS^{d-1}\times\RR^d)\cup(\SSS^{d-1}\times\SSS^{d-1}\right).$$

Choosing a cut-off function $\phi\in\Sm_c(\RR^d)$ with $\phi\equiv 1$ around $0$, we may define the map 
$$\Sm(\SSS^{d-1})\times\RR_+\rightarrow\Sm(\RR^d) \,\colon (\psi,R)\mapsto \psi_R(x)=(1-\phi(x/R))\psi(x/|x|),$$
using the one-to-one correspondence between
the space $\Sm(\SSS^{d-1})$ and the $0$-homogeneous smooth functions on $\RR^d\setminus\{0\}$.
We call the image of $\psi$ under this map an \emph{asymptotic cut-off} $\psi_R$.

As further notation we set, for two functions $f,\ g:\ X\rightarrow [0,\infty)$, $f(x)\gtrsim g(x)$ if there exists a constant $C>0$ such that $f(x)\geq C g(x)$ for any $x\in X$. The Fourier transform of $u\in\Swd(\RR^d)$ will be denoted by $\widehat{u}$ 
or $\cF(u)$.\\

The following definitions, suitable generalizations of the notion of support, singular support and wave front set to tempered distributions, are due to Melrose (\cite{melskript} and \cite{mel}). An equivalent definition, emphasising $\SG$-pseudodifferential calculus, is used in \cite{coma} and \cite{cor}. Another most notable source on global microlocal analysis and tempered distributions is \cite{cordes}.

\begin{definition}
\label{def:css}
The \emph{cone support} of $u\in\Swd(\RR^d)$ is a subset of $\BB^d$, defined as
\begin{align}
\Csp(u)&:=\supp(u)\ \cup \notag \\
 &\left\{\omega\in\SSS^{d-1}|\ \exists\ R>0,\ \psi\in\Sm\left(\SSS^{d-1}\right),\psi(\omega)\neq0\text{ s.t. }\psi_R u\equiv0\right\}^c
\end{align}
The \emph{cone singular support} of $u\in\Swd(\RR^d)$ is a subset of $\BB^d\cong\RR^d\sqcup\SSS^{d-1}$, defined as
\begin{align}\notag
\Css&(u):=\singsupp(u)\ \cup\ \\ &\left\{\omega\in\SSS^{d-1}| \exists\ R>0,\ \psi\in\Sm\left(\SSS^{d-1}\right),\psi(\omega)\neq0\text{ s.t. }\psi_R u\in\Sw(\RR^d)\right\}^c.
\label{eq:css}
\end{align}
The (global) \emph{wave front set} of $u$ is defined as
\begin{multline}
\WF(u)=\WF_{cl}(u)\cup\{(\omega,\xi)\in\SSS^{d-1}\times\BB^d|\\
\exists\psi\in\Sm\left(\SSS^{d-1}\right), \psi(\omega)\neq 0,R>0\text{ s.t. }\xi\notin\Css(\widehat{\psi_R u})\}^c.
\end{multline}
where $\WF_{cl}(u)\subset\RR^d\times\SSS^{d-1}$ denotes the classical (H\"ormander's) wave front set of $u$. We sometimes refer to 
$\WF(u)\cap(\SSS^{d-1}\times\BB^{d})$ as the asymptotic part of the wave front set of $u$.
\end{definition}

\begin{proposition}[Properties of the global wave front set]
\label{pr:wfeign}
Let $u$ in $\Sw^\prime(\RR^d)$. Then,
$$
\WF(u)\subset(\RR^d\times\SSS^{d-1})\cup(\SSS^{d-1}\times\RR^d)\cup(\SSS^{d-1}\times\SSS^{d-1})
$$
is closed. There is a remarkable symmetry under Fourier transformation
(cf. \cite[Lemma 2.4.]{coma}, \cite[Theorem 8.1.8]{hoerm1}, \cite[Corollary 12.17]{melskript}), given by
\begin{equation}
(p,q)\in\WF(u)\Longleftrightarrow(q,-p)\in\WF(\widehat{u}).
\label{eq:wffwf}
\end{equation}
Furthermore
\begin{equation}
\pi_1(\WF_{cl}(u))=\singsupp(u),\quad \pi_1(\WF(u))=\Css(u).
\label{eq:prwf}
\end{equation}

\end{proposition}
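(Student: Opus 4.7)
The proof plan splits into three parts mirroring the statement.

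First, for the containment and closedness: the inclusion $\WF_{cl}(u)\subset\RR^d\times\SSS^{d-1}$ is built into H\"ormander's definition, while the asymptotic part of $\WF(u)$ lies in $\SSS^{d-1}\times\BB^d=\SSS^{d-1}\times(\RR^d\sqcup\SSS^{d-1})$ by construction, so taking the union gives exactly the three pieces listed. For closedness, I would observe that the defining conditions are open in the ambient space: $\WF_{cl}(u)$ is classically closed, and in the asymptotic piece the condition $\psi(\omega)\neq 0$ is open in $\omega$, while $\xi\notin\Css(\widehat{\psi_R u})$ is open in $\xi$ (using that $\Css$ of any tempered distribution is a closed subset of $\BB^d$, which is a direct check from Definition \ref{def:css}).

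Second, for the projection identities, $\pi_1(\WF_{cl}(u))=\singsupp(u)$ is the classical statement from H\"ormander. For $\pi_1(\WF(u))=\Css(u)$, the inclusion ``$\subset$'' is immediate from the definitions. For the reverse, I would argue by contrapositive: assume $\omega\in\SSS^{d-1}$ is not in $\pi_1(\WF(u))$. Since $\BB^d$ is compact, $\pi_1$ is a closed map, so in particular for every $\xi\in\BB^d$ there are $\psi^\xi,R^\xi$ with $\psi^\xi(\omega)\neq 0$ and $\xi\notin\Css(\widehat{\psi^\xi_{R^\xi}u})$. Using the openness of these conditions in $\xi$ together with the compactness of $\BB^d$, one covers $\BB^d$ by finitely many such neighbourhoods and combines the corresponding cutoffs (e.g.\ by taking a product that is still nonzero at $\omega$, after possibly shrinking to a common asymptotic cone) into a single asymptotic cutoff $\psi_R$ with $\Css(\widehat{\psi_R u})=\emptyset$. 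This forces $\widehat{\psi_R u}\in\Sw(\RR^d)$, hence $\psi_R u\in\Sw(\RR^d)$, so $\omega\notin\Css(u)$.

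Third, the Fourier symmetry \eqref{eq:wffwf} is the technical heart of the statement. The three components of $\WF$ get permuted by Fourier transform as follows: the local-singularity component in $\RR^d\times\SSS^{d-1}$ swaps, with a sign change, with the asymptotic-exit component in $\SSS^{d-1}\times\RR^d$, while the corner component in $\SSS^{d-1}\times\SSS^{d-1}$ is preserved (with the swap $(p,q)\mapsto(q,-p)$). Each case rests on multiplication by a Schwartz cutoff and Fourier transform interacting via convolution, together with the translation--modulation rule $\widehat{e^{-ip\cdot(\cdot)}u}=\tau_p\widehat{u}$, which is precisely what produces the $-p$ in the formula. The main obstacle will be setting up the careful duality between localising cutoffs $\phi$ near a point $p\in\RR^d$ and asymptotic cutoffs $\psi_R$ on $\SSS^{d-1}$, since these two families of test objects play dual roles under $\cF$ but are not related by it in any naive pointwise sense. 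The cited references \cite[Lemma 2.4]{coma}, \cite[Theorem 8.1.8]{hoerm1} and \cite[Corollary 12.17]{melskript} already formalise this correspondence, so my plan is to invoke them directly, after verifying that the definition of $\WF(u)$ used here coincides with the ones to which those results apply.
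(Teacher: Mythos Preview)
Your proposal is correct in outline and considerably more detailed than what the paper actually does: the paper's entire justification is the single sentence ``Proposition \ref{pr:wfeign} follows immediately by Definition \ref{def:css}.'' In other words, the authors treat all three assertions as direct consequences of the definitions (together with the cited references for the Fourier symmetry), and do not spell out any of the steps you describe.

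Your breakdown into three parts is the natural one, and each part is handled sensibly. One small point worth tightening in your second step: when you combine the finitely many asymptotic cutoffs $\psi^{\xi_j}$ into a single $\psi$ supported where all of them are nonvanishing, you are implicitly using that multiplication by a symbol in $\SG^{0,0}$ does not enlarge $\Css$ of the Fourier transform (equivalently, the pseudolocality of zero-order $\SG$-pseudodifferential operators with respect to the global wave front set). This is standard in the $\SG$-calculus and is what underlies the ``follows immediately'' claim, but it is the one place where a reader might ask for a reference or a line of explanation. For the Fourier symmetry your plan to invoke \cite[Lemma 2.4]{coma}, \cite[Theorem 8.1.8]{hoerm1} and \cite[Corollary 12.17]{melskript} is exactly what the paper does, so nothing further is needed there.
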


Proposition \ref{pr:wfeign} follows immediately by Definition \ref{def:css}. In the sequel, when no confusion can arise,
we will sometimes omit to write explicitly the ``base spaces'' $\RR^d$, $\BB^d$, $\SSS^{d-1}$, to shorten the notation.

%%%%
%%%%
%%%%
\subsection{Existence of tempered distributions with assigned singularities}
In this subsection we show that it is always possible to find a tempered distribution $T\in\Swd(\RR^d)$ with any given \underline{global} wave front set: to our best knowledge, this (expected) result has not appeared elsewhere before. The construction is similar to the classical one by H\"ormander in \cite[Theorem 8.1.4]{hoerm1}, which, in fact, will be used for the non-asymptotic part of the distribution. Thus, the main focus, in the following argument, is on the asymptotic singularities. A smooth function with one given asymptotic singularity $(\omega,\theta)\in\SSS^{d-1}\times\SSS^{d-1}$ is first defined; then, the general asymptotic case is achieved, and combined with the construction given by H\"ormander.
The basic ingredients of the proof are the identity $\mathcal{F}_{x\to z}\{f(x)\,e^{ikx}\}(z)=\mathcal{F}\{f\}(z-k)$ and the fact that the Gaussian is an eigenfunction of the Fourier transform.

\begin{definition}
Let $\omega,\ \eta\in\SSS^{d-1}$, $k\in\NN$. We define $f_k(.;\omega,\eta)\in\Sw(\RR^d)$ as
\begin{equation}
f_k(x;\omega,\eta):=\exp\left(-\frac{1}{2}|x-k^3\omega|^2+ik^3x\cdot\eta-\frac{i}{2}k^6 \omega\cdot\eta\right).
\label{eq:fk}
\end{equation}
\end{definition}

\begin{lemma}
\label{lem:ffourier}
$\mathcal{F}_{x\rightarrow z}\big\{f_k(x;\omega,\eta)\big\}=(2\pi)^{n/2}f_k(z;\eta,-\omega)$.
\end{lemma}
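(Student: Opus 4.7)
The plan is to carry out a direct computation using exactly the two ingredients the authors highlight: the translation/modulation pair of identities for the Fourier transform, and the fact that the standard Gaussian is an eigenfunction. First, I would abbreviate $c=k^{3}$ and rewrite
\[
f_{k}(x;\omega,\eta)=g(x-c\omega)\,e^{i c\, x\cdot\eta}\,e^{-\frac{i}{2}c^{2}\omega\cdot\eta},
\qquad g(x):=e^{-|x|^{2}/2},
\]
isolating the constant phase factor $e^{-\frac{i}{2}c^{2}\omega\cdot\eta}$, which passes unchanged through $\mathcal{F}$.

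Next I would apply the two standard identities in sequence. The Gaussian eigenfunction property gives $\mathcal{F}g(z)=(2\pi)^{d/2}g(z)$, and the translation identity $\mathcal{F}\{g(\,\cdot\,-c\omega)\}(z)=e^{-i c\,\omega\cdot z}\,\mathcal{F}g(z)$ then delivers
\[
\mathcal{F}\{g(\,\cdot\,-c\omega)\}(z)=(2\pi)^{d/2}\,e^{-i c\,\omega\cdot z}\,e^{-|z|^{2}/2}.
\]
Multiplication by $e^{i c\, x\cdot\eta}$ corresponds (via the modulation identity $\mathcal{F}\{h(x)e^{i c\,x\cdot\eta}\}(z)=\mathcal{F}h(z-c\eta)$) to a shift $z\mapsto z-c\eta$ on the Fourier side, so after also reinstating the constant phase, I obtain
\[
\mathcal{F}\{f_{k}(\,\cdot\,;\omega,\eta)\}(z)=(2\pi)^{d/2}\,e^{-|z-c\eta|^{2}/2}\,e^{-i c\,\omega\cdot(z-c\eta)}\,e^{-\frac{i}{2}c^{2}\omega\cdot\eta}.
\]

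Finally I would simplify the exponent: the term $-ic\omega\cdot(z-c\eta)-\tfrac{i}{2}c^{2}\omega\cdot\eta$ equals $-ic\,\omega\cdot z+\tfrac{i}{2}c^{2}\omega\cdot\eta$, and I would compare this to the definition \eqref{eq:fk} evaluated at $(z;\eta,-\omega)$:
\[
f_{k}(z;\eta,-\omega)=\exp\!\Bigl(-\tfrac12|z-c\eta|^{2}-i c\,z\cdot\omega+\tfrac{i}{2}c^{2}\omega\cdot\eta\Bigr),
\]
giving the claimed equality (with $n=d$). There is no genuine obstacle here — the only thing to watch is consistent bookkeeping of the three phase contributions (the Gaussian translation phase, the modulation shift producing a $c^{2}\omega\cdot\eta$ correction, and the built-in constant $-\tfrac{i}{2}c^{2}\omega\cdot\eta$), which combine precisely so that the $\omega\leftrightarrow\eta$, $\eta\mapsto -\omega$ swap in \eqref{eq:fk} absorbs them.
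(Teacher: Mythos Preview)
Your proof is correct and follows essentially the same route as the paper: write $f_k$ as a shifted Gaussian times a modulation and a constant phase, then apply the translation and modulation identities together with the Gaussian eigenfunction property. The only difference is cosmetic (you abbreviate $c=k^{3}$ and write $g$ for the Gaussian), and you spell out the final phase bookkeeping and comparison with $f_k(z;\eta,-\omega)$ that the paper leaves to the reader.
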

\begin{proof}
With the Gaussian $N(x)=\exp\left(-\frac{1}{2}x^2\right)$ we have: $$f_k(x;\omega,\eta)=N(x-k^3\omega)e^{ik^3x\cdot \eta-\frac{i}{2}k^6 \omega\cdot\eta}.$$ Thus
\begin{align*}
\mathcal{F}_{x\rightarrow z}\{f_k(x;\omega,\eta)\}(z)&=\mathcal{F}_{x\rightarrow z}\Big\{N(x-k^3\omega)e^{ik^3x\cdot \eta}\Big\}(z)\cdot e^{-\frac{i}{2}k^6 \omega\cdot\eta}\\
&=\mathcal{F}_{x\rightarrow z}\Big\{N(x-k^3\omega)\Big\}(z-k^3\eta)\cdot e^{-\frac{i}{2}k^6 \omega\cdot\eta}\\
&=\Big(\mathcal{F}_{x\rightarrow z}\{N(x)\}e^{-ik^3\omega\cdot .}\Big)(z-k^3\eta)\cdot e^{-\frac{i}{2}k^6 \omega\cdot\eta}\\
&=(2\pi)^{n/2}N(z-k^3\eta)e^{-ik^3\omega\cdot (z-k^3\eta)}e^{-\frac{i}{2}k^6 \omega\cdot\eta}.
\end{align*}
\end{proof}

Using the function $f_k(.,\omega,\eta)$ introduced in \eqref{eq:fk}, it is possible to define an element of $\Sm\cap\Sw^\prime$ that is rapidly decreasing everywhere except along the direction $\omega$, and whose Fourier transform is rapidly decreasing everywhere except along the direction $\eta$, as we show in the next two Lemmas.

\begin{lemma}
\label{lem:cssg}
The series $\sum_{k=0}^{\infty}\ f_k(x;\omega,\eta)$ converges absolutely and uniformly on each compact set of $\RR^d$, 
and its limit is a function $g(.;\omega,\eta)\in\Sm(\RR^d)\cap\Sw^\prime(\RR^d)$, bounded together with all its derivatives, and such that
$\Css(g(.;\omega,\eta))=\{\omega\}$.
\end{lemma}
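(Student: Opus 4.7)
The key observation is that $|f_k(x;\omega,\eta)| = \exp(-\frac{1}{2}|x - k^3\omega|^2)$, so each summand is a unit-width Gaussian bump centred at $k^3\omega$ on the ray through $\omega$, dressed by a unimodular phase tuned (by Lemma \ref{lem:ffourier}) so that its Fourier transform has the same form with $(\omega,\eta)$ replaced by $(\eta,-\omega)$. On any compact $K\subset\RR^d$, as soon as $k^3\geq 2\sup_{x\in K}|x|$ one has $|x-k^3\omega|\geq k^3/2$, hence $|f_k(x)|\leq e^{-k^6/8}$, which is summable; this gives absolute and uniform convergence on $K$. Since $\partial_j f_k = [-(x_j - k^3\omega_j) + ik^3\eta_j]\,f_k$, each $\partial^\alpha f_k$ equals a polynomial of degree $|\alpha|$ in $(x-k^3\omega,k^3)$ times $f_k$, and the same tail estimate absorbs its polynomial growth, so termwise differentiation is legitimate and $g(\cdot;\omega,\eta)\in\Sm(\RR^d)$.

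For temperedness, decompose $x = x_\parallel\omega + x_\perp$ with $x_\perp\perp\omega$, so that $|x-k^3\omega|^2 = (k^3-x_\parallel)^2 + |x_\perp|^2$, and thus
\[
|g(x)|\leq e^{-|x_\perp|^2/2}\sum_{k\geq 0}e^{-(k^3-x_\parallel)^2/2}.
\]
The $k$-sum is bounded uniformly in $x_\parallel\in\RR$: at most two consecutive integers can satisfy $|k^3-x_\parallel|\leq 1$, while the remaining terms form a rapidly convergent tail thanks to the growing gaps $(k+1)^3-k^3\sim 3k^2$. Hence $g\in L^\infty(\RR^d)$, and the polynomial-weighted variant gives $|\partial^\alpha g(x)|\leq C_\alpha(1+|x|)^{|\alpha|}$, so $g\in\Sw^\prime(\RR^d)$.

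For $\Css(g(\cdot;\omega,\eta))=\{\omega\}$, since $g\in\Sm$ we have $\singsupp(g)=\emptyset$, so only asymptotic directions matter. Given $\omega'\in\SSS^{d-1}\setminus\{\omega\}$, choose $\psi\in\Sm(\SSS^{d-1})$ supported in a geodesic cap around $\omega'$ disjoint from a neighbourhood of $\omega$, with $\psi(\omega')\neq 0$. On $\supp\psi_R$ the angle $\theta$ between $x$ and $\omega$ stays $\geq\theta_0>0$, and an elementary quadratic minimisation in $t$ yields $|x-t\omega|^2\geq |x|^2\sin^2\theta_0$ for every $t\geq 0$. Combining with the estimate above, $|\psi_R(x)g(x)|\leq Ce^{-c|x|^2}$, with analogous Gaussian bounds for all derivatives; hence $\psi_R g\in\Sw(\RR^d)$ and $\omega'\notin\Css(g)$. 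Conversely, evaluate at $x_k = k^3\omega$: the $k$-th term is $e^{\frac{i}{2}k^6\omega\cdot\eta}$ of unit modulus, while for $j\neq k$ and $k\geq 1$ one has $|k^3-j^3|\geq k^2$, hence $|f_j(k^3\omega)|\leq e^{-k^4/2}$ with negligible total contribution. Thus $|g(k^3\omega)|\geq 1/2$ for large $k$; for any $\psi$ with $\psi(\omega)\neq 0$ and any $R>0$, $\psi_R(k^3\omega)\to\psi(\omega)\neq 0$, so $\psi_R g$ fails to decay at infinity and cannot lie in $\Sw(\RR^d)$, i.e.\ $\omega\in\Css(g)$.

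The main technical obstacle will be the uniform bookkeeping of these $k$-sums across all three assertions: the sparsity of the centres $k^3\omega$ (with spacing of order $k^2$) is what simultaneously yields boundedness of $g$, isolates the peak at $x = k^3\omega$ from all other summands, and makes the Gaussian decay of $g$ off the $\omega$-direction uniform in $k\geq 0$.
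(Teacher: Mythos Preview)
Your argument is correct and follows essentially the same strategy as the paper: exploit that $|f_k(x;\omega,\eta)|$ is a unit-width Gaussian centred at $k^3\omega$, use the sparsity of the centres along $\RR_+\omega$ to get a uniform bound on the sum, and use a cone estimate away from $\omega$ to obtain rapid decay there. The paper phrases the off-ray estimate as the scaling inequality $|x-k^3\omega|\ge c(|x|+k^3)$ for $\dot x\neq\omega$, which is equivalent to your orthogonal decomposition combined with the angle bound $|x_\perp|\ge|x|\sin\theta_0$.

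Your proof is actually more careful than the paper's in two places. First, you explicitly verify $\omega\in\Css(g)$ by evaluating at $x_k=k^3\omega$ and isolating the dominant term; the paper only shows boundedness on the ray and tacitly asserts non-decay. Second, for the derivatives you claim only polynomial growth $|\partial^\alpha g(x)|\le C_\alpha(1+|x|)^{|\alpha|}$, rather than the boundedness asserted in the lemma. This is the right call: the $k$-th summand of $\partial_j g$ at $x=k^3\omega$ is $ik^3\eta_j\,e^{\frac{i}{2}k^6\omega\cdot\eta}$, of modulus $k^3|\eta_j|$, with the remaining terms contributing $O(e^{-k^4/2})$; so the first derivatives are in fact unbounded, and the lemma's ``bounded together with all its derivatives'' is an overstatement. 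Your polynomial-growth bound is what is true and is all that is needed for $g\in\Sw^\prime(\RR^d)$ and for the subsequent corollary.
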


\begin{center}
\begin{figure}[!ht]
	\centering
		\includegraphics[width=12.8cm]{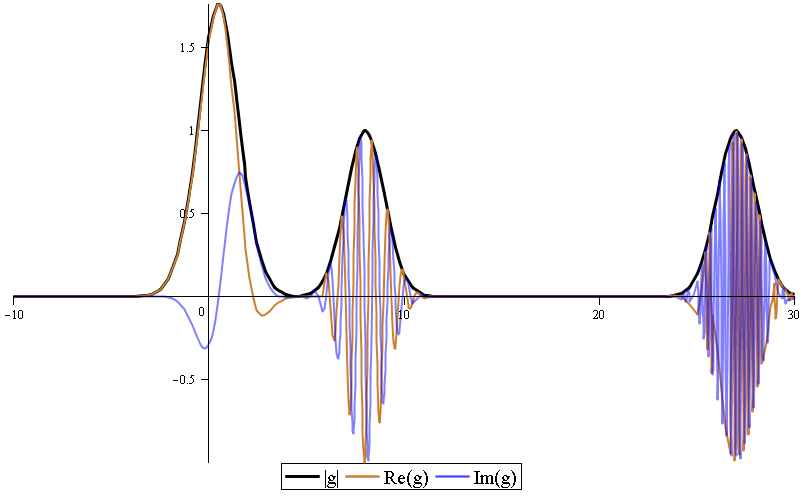}
	\caption{The graph of $g(x;1,1)$, $x\in\RR$ (including 3 peaks)}
	\label{fig:graphg}
\end{figure}
\end{center}

\begin{proof}
Assume $x$ does not lie on the ray given by $\RR^+\omega$, i.e. $\dot{x}\neq\omega$. Then, by a standard scaling estimate, $\exists\ c>0$ s.t. $|x-k^3\omega|\geq c(|x|+k^3)$.
Thus, the sum converges absolutely and uniformly on any compact set to $g(x;\omega,\eta)$ and $|x^\alpha f_k(x;\omega,\eta)|\le C_\alpha$, $\alpha\in\ZZ_+$, since
\begin{align*}
\left|\sum_{k=0}^{N}\ f_k(x;\omega,\eta)\right|&\leq\sum_{k=0}^{N}\ \big|f_k(x;\omega,\eta)\big|\\
&\leq \sum_{k=0}^{N}\ \exp\left(-\frac{c}{2}(|x|+k^3)^2\right).
\end{align*}
If $\dot{x}=\omega$ we have 
$$
\left|\sum_{k=0}^{N}\ f_k(x;\omega,\eta)\right|\leq \sum_{k=0}^{N}\ \exp\left(-\frac{1}{2}(|x|-k^3)^2\right),
$$
which is bounded with respect to $x$.
The derivatives of $g(.;\omega,\eta)$ can be estimated similarly. Thus $g$ is smooth everywhere, bounded with all its derivatives, rapidly decreasing along every direction, apart from $\omega$. This implies $g(.;\omega,\eta)\in\Sm\cap\Sw^\prime$ and $\Css(g(.;\omega,\eta))=\{\omega\}$, as claimed.
\end{proof}

\begin{corollary} Let $g(x;\omega,\eta)$ be the function defined in Lemma \ref{lem:cssg}. Then,
$$\WF(g(.;\omega,\eta))=\{(\omega,\eta)\}.$$
\end{corollary}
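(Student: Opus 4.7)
The plan is to combine the cone singular support computation of Lemma \ref{lem:cssg} with the Fourier symmetry \eqref{eq:wffwf} in order to pin down both coordinates of the wave front set simultaneously. Since $g(\cdot;\omega,\eta)\in\Sm(\RR^d)$, its classical wave front set is empty, so by Proposition \ref{pr:wfeign} every element $(p,q)\in\WF(g(\cdot;\omega,\eta))$ lies in $\SSS^{d-1}\times\BB^d$; the first-projection identity \eqref{eq:prwf} combined with Lemma \ref{lem:cssg} then yields $p\in\Css(g(\cdot;\omega,\eta))=\{\omega\}$, whence $\WF(g(\cdot;\omega,\eta))\subset\{\omega\}\times\BB^d$.

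For the second coordinate I would compute $\widehat{g(\cdot;\omega,\eta)}$ by passing the Fourier transform inside the defining series. The Gaussian estimates established in the proof of Lemma \ref{lem:cssg} show that the partial sums are uniformly bounded in $L^\infty(\RR^d)$ together with all their derivatives, hence form a bounded, pointwise convergent sequence in $\Sw^\prime(\RR^d)$; by continuity of $\cF$ on $\Sw^\prime$ and Lemma \ref{lem:ffourier} this gives
$$
\widehat{g(\cdot;\omega,\eta)}=(2\pi)^{n/2}\,g(\cdot;\eta,-\omega).
$$
Since $\widehat{g(\cdot;\omega,\eta)}$ is again a smooth function of the form covered by Lemma \ref{lem:cssg}, repeating the previous step yields $\WF(\widehat{g(\cdot;\omega,\eta)})\subset\{\eta\}\times\BB^d$.

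Feeding these two inclusions into \eqref{eq:wffwf}: any $(p,q)\in\WF(g(\cdot;\omega,\eta))$ produces $(q,-p)\in\WF(\widehat{g(\cdot;\omega,\eta)})\subset\{\eta\}\times\BB^d$, forcing $q=\eta$, while the first step already gave $p=\omega$; hence $\WF(g(\cdot;\omega,\eta))\subset\{(\omega,\eta)\}$. Non-emptiness is automatic, since $\Css(g(\cdot;\omega,\eta))=\{\omega\}\neq\emptyset$ implies $g(\cdot;\omega,\eta)\notin\Sw(\RR^d)$, so by \eqref{WFequiv} the wave front set is nonempty; the only possibility left is $\{(\omega,\eta)\}$.

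The only delicate point, and really the main obstacle, is the justification that $\cF$ commutes with the infinite sum; this is genuinely a question of $\Sw^\prime$-convergence of the partial sums rather than anything deeper, and it should follow without difficulty from the very same Gaussian bounds used in Lemma \ref{lem:cssg}. Every other step reduces to a direct application of Proposition \ref{pr:wfeign}, Lemma \ref{lem:ffourier}, Lemma \ref{lem:cssg}, or the equivalence \eqref{WFequiv}.
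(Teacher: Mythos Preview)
Your proposal is correct and follows essentially the same route as the paper: both arguments pin down the two projections of $\WF(g)$ via $\pi_1(\WF(g))=\Css(g)=\{\omega\}$ from Lemma~\ref{lem:cssg} and $\pi_2(\WF(g))=\Css(\widehat{g})=\{\eta\}$ via the Fourier symmetry \eqref{eq:wffwf} together with Lemma~\ref{lem:ffourier}. You are simply more explicit than the paper on two points it leaves tacit---the $\Sw^\prime$-convergence needed to pass $\cF$ through the series, and the non-emptiness of $\WF(g)$ via \eqref{WFequiv}---both of which are genuine but routine.
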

\begin{proof}
Since, by Proposition \ref{pr:wfeign},
\begin{align*}
\pi_1(\WF(g(.;\omega,\eta)))&=\Css(g(.;\omega,\eta)),
\\
\pi_2(\WF(g(.;\omega,\eta)))&=\Css(\mathcal{F}(g(.;\omega,\eta))),
\end{align*}
the assertion follows from \eqref{eq:wffwf}, Lemma \ref{lem:ffourier} and Lemma \ref{lem:cssg}.
\end{proof}

\begin{corollary}
For any closed set $\Gamma\subset\SSS^{d-1}\times\SSS^{d-1}$ there exists $T\in\Swd(\RR^d)$ such that $\WF(T)=\Gamma$.
\label{cor:Tdef}
\end{corollary}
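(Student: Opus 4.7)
My plan is to construct $T$ as a weighted series of the single-singularity building blocks $g(\cdot\,;\omega,\eta)$ from Lemma \ref{lem:cssg}, indexed by a countable dense subset of $\Gamma$; this is the $\SG$-analogue of H\"ormander's classical construction for local wave front sets.

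\textbf{Setup and basic regularity.} If $\Gamma = \emptyset$, take $T = 0$. Otherwise, since $\SSS^{d-1} \times \SSS^{d-1}$ is a compact metric space, $\Gamma$ is separable, so I fix a countable dense subset $\{(\omega_j,\eta_j)\}_{j \in \NN} \subseteq \Gamma$ and define
\[
T := \sum_{j=1}^{\infty} 2^{-j}\, g(\cdot\,;\omega_j,\eta_j).
\]
By Lemma \ref{lem:cssg} each summand is smooth and bounded together with all its derivatives, by constants independent of $(\omega_j,\eta_j)$, so the series converges uniformly with all its derivatives and defines $T \in \Sm(\RR^d) \cap \Swd(\RR^d)$. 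Applying Lemma \ref{lem:ffourier} termwise, the same properties hold for $\widehat T = (2\pi)^{d/2}\sum_j 2^{-j} g(\cdot\,;\eta_j,-\omega_j)$. In particular $\singsupp(T) = \singsupp(\widehat T) = \emptyset$, and Proposition \ref{pr:wfeign} together with its Fourier-symmetric form then gives $\WF(T) \subseteq \SSS^{d-1} \times \SSS^{d-1}$, ruling out any ``mixed'' singularities.

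\textbf{Lower bound $\Gamma \subseteq \WF(T)$.} Since $\WF(T)$ is closed and $\{(\omega_j,\eta_j)\}$ dense in $\Gamma$, it suffices to verify $(\omega_{j_0},\eta_{j_0}) \in \WF(T)$ for each $j_0$. For any $\psi \in \Sm(\SSS^{d-1})$ with $\psi(\omega_{j_0}) \neq 0$ and any $R>0$, I test $\widehat{\psi_R T}$ along the sequence $z_k = k^3 \eta_{j_0}$: Lemma \ref{lem:ffourier} shows that the bump $f_k(\cdot\,;\omega_{j_0},\eta_{j_0})$ contributes a term of asymptotic modulus $\sim 2^{-j_0}(2\pi)^{d/2}|\psi(\omega_{j_0})|$, the summands with $\eta_j \neq \eta_{j_0}$ are Gaussian-small at $z_k$, and the summands with $\eta_j = \eta_{j_0}$, $\omega_j \neq \omega_{j_0}$ contribute oscillating terms of the form $2^{-j}\psi(\omega_j)(2\pi)^{d/2}e^{-\frac{i}{2}k^6 \eta_{j_0}\cdot\omega_j}$; the distinct phase frequencies $k^6\omega_j$ prevent systematic cancellation of the dominant $j_0$-term to arbitrary polynomial order in $k$, so $\widehat{\psi_R T}$ is not rapidly decreasing along $\eta_{j_0}$.

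\textbf{Upper bound $\WF(T) \subseteq \Gamma$.} For $(\omega_0,\eta_0) \notin \Gamma$, choose open sets $U \ni \omega_0$, $V \ni \eta_0$ in $\SSS^{d-1}$ with $\overline U \times \overline V$ disjoint from $\Gamma$, a cut-off $\psi \in \Sm(\SSS^{d-1})$ with $\supp\psi \subseteq U$ and $\psi(\omega_0) \neq 0$, and a closed conic neighborhood $V'$ of $\eta_0$ with $\overline{V'} \subset V$. For each $j$, either $\omega_j \notin \overline U$, in which case the bumps of $g(\cdot\,;\omega_j,\eta_j)$ at $k^3\omega_j$ are separated from $\supp\psi_R$ by the scaling estimate $|x - k^3\omega_j| \gtrsim |x| + k^3$ from the proof of Lemma \ref{lem:cssg}, giving $\psi_R\, g(\cdot\,;\omega_j,\eta_j) \in \Sw(\RR^d)$; or $\eta_j \notin \overline V$, in which case the dual Fourier estimate yields rapid decay of $\widehat{\psi_R\, g(\cdot\,;\omega_j,\eta_j)}$ on the cone of $V'$. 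Summing with weights $2^{-j}$ produces rapid decay of $\widehat{\psi_R T}$ on this cone, so $(\omega_0,\eta_0) \notin \WF(T)$. The main obstacle is uniformity in $j$: because $\{(\omega_j,\eta_j)\}$ typically accumulates at $\partial(\overline U \times \overline V)$, the Gaussian decay constants degenerate with $d(\omega_j,\overline U)$ and $d(\eta_j,\overline V)$, and one must verify that the geometric weights $2^{-j}$ still dominate the resulting polynomial blow-up of Schwartz seminorms and yield a convergent series in $\Sw$.
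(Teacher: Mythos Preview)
Your construction $T=\sum_j 2^{-j}g(\cdot;\omega_j,\eta_j)$ is exactly the paper's; its proof there consists of little more than writing this series down, invoking the Weierstrass $M$-test, and appealing to ``the properties of $g$ described above''. Where you go further and try to verify $\WF(T)=\Gamma$ explicitly, both halves of your argument remain incomplete.

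For the lower bound, your non-cancellation claim is heuristic. After your (unjustified) approximation $\psi_R f_k\approx\psi(\omega_j)f_k$, the terms with $\eta_j=\eta_{j_0}$ contribute phases $e^{-\frac{i}{2}k^6(\eta_{j_0}\cdot\omega_j)}$, and the effective \emph{scalar} frequencies $\eta_{j_0}\cdot\omega_j$ need not be distinct for distinct $\omega_j$ once $d\ge2$; even if they were, ``distinct frequencies prevent cancellation to all polynomial orders'' is an assertion, not a proof---you must actually exhibit a subsequence of $k$ along which $|\widehat{\psi_R T}(k^3\eta_{j_0})|$ stays bounded below (compare the careful choice of test points in H\"ormander's proof of \cite[Theorem~8.1.4]{hoerm1}). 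For the upper bound, you correctly identify the uniformity obstacle but then stop, so as written the argument is unfinished. The resolution is not that the weights $2^{-j}$ beat a polynomial blow-up but that there is no blow-up: take \emph{nested} neighbourhoods $U'\Subset U$, $V'\Subset V$ with $\supp\psi\subset U'$ and the test cone contained in $V'$. Then for every $j$ either $\omega_j\notin\overline U$, whence $d(\omega_j,\overline{U'})\ge d(\SSS^{d-1}\setminus U,\overline{U'})>0$, or $\eta_j\notin\overline V$, with the analogous dual bound; in either case the relevant Gaussian decay constants are uniform in $j$, and the $2^{-j}$ merely ensures summability.
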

\begin{proof}
Take a dense sequence without repetitions $\{(\omega_l,\eta_l)\}_{l\in\NN}\subset\Gamma$ and define
\begin{equation}
T(x):=\sum_{l=0}^{\infty} 2^{-l} g(x;\omega_l,\eta_l).
\label{eq:Tdef}
\end{equation}
By the properties of $g(.;\omega_l,\eta_l)$ described above, \eqref{eq:Tdef} yields (by the Weierstrass M-test) a smooth function which fulfills the requirements.
\end{proof} 

\begin{lemma}
Let $\Gamma$ be closed in $\RR^d\times\SSS^{d-1}$. Then, there exists $T\in\Swd(\RR^d)$ such that $\WF(T)=\overline{\Gamma}$ where $\overline{\Gamma}$ is the closure of $\Gamma$ in $\BB^d\times\BB^d$.
\label{lem:hoermexistence}
\end{lemma}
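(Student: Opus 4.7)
The idea is to combine the classical H\"ormander construction for the local part of $\Gamma$ with the construction of Corollary \ref{cor:Tdef} for the asymptotic boundary points that appear when passing to the closure $\overline{\Gamma}\subset\BB^d\times\BB^d$. Since $\Gamma\subseteq\RR^d\times\SSS^{d-1}$ has its second factor already in the compact set $\SSS^{d-1}$, the only limit points of $\Gamma$ not already in $\Gamma$ itself arise from sequences $(x_n,\eta_n)\in\Gamma$ with $|x_n|\to\infty$, and hence land in $\SSS^{d-1}\times\SSS^{d-1}$. Writing $\Gamma_\infty:=\overline{\Gamma}\cap(\SSS^{d-1}\times\SSS^{d-1})$, one has $\overline{\Gamma}=\Gamma\cup\Gamma_\infty$, with $\Gamma_\infty$ closed in $\SSS^{d-1}\times\SSS^{d-1}$.

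Corollary \ref{cor:Tdef} applied to $\Gamma_\infty$ yields a smooth $T_\infty\in\Swd(\RR^d)$ with $\WF(T_\infty)=\Gamma_\infty$. H\"ormander's construction \cite[Thm.~8.1.4]{hoerm1}, applied to the closed conic set $\{(x,t\eta):(x,\eta)\in\Gamma,\,t>0\}$, yields, with $\{(x_j,\eta_j)\}_{j\in\NN}$ dense in $\Gamma$, a fixed bump $\phi\in\Sm_c(\RR^d)$ and $\lambda_j\to\infty$ chosen sufficiently fast, a bounded continuous function
\begin{equation*}
T_0(x):=\sum_{j=1}^\infty j^{-2}\,\phi(x-x_j)\,e^{i\lambda_j\eta_j\cdot x}\in\Swd(\RR^d)
\end{equation*}
with $\WF_{cl}(T_0)=\Gamma$. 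Setting $T:=T_0+T_\infty$, the inclusion $\overline{\Gamma}\subseteq\WF(T)$ is immediate: $T_\infty$ is smooth, so $\WF_{cl}(T)=\WF_{cl}(T_0)=\Gamma$, and $\WF(T)$ is closed in $\BB^d\times\BB^d$ by Proposition \ref{pr:wfeign}.

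The reverse inclusion $\WF(T)\subseteq\overline{\Gamma}$ is where the work sits. By subadditivity of $\WF$ under sums and $\WF(T_\infty)=\Gamma_\infty\subseteq\overline{\Gamma}$, it suffices to show that the asymptotic part of $\WF(T_0)$ is contained in $\Gamma_\infty$. Given $(\omega,\eta)\notin\Gamma_\infty$ with $\omega\in\SSS^{d-1}$, the plan is to choose an asymptotic cut-off $\psi_R$ supported in a small conic neighbourhood of $\omega$ past a large radius $R$; by density of $\{(x_j,\eta_j)\}$ in $\Gamma$ and $(\omega,\eta)\notin\overline{\Gamma}$, shrinking the neighbourhood and enlarging $R$ forces $\eta_j$ to stay bounded away from $\eta$ for every term surviving in the (absolutely convergent) subseries $\psi_R T_0$. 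Using
\begin{equation*}
\cF\bigl(\phi(\cdot-x_j)\,e^{i\lambda_j\eta_j\cdot}\bigr)(\xi)=e^{-ix_j\cdot(\xi-\lambda_j\eta_j)}\,\widehat\phi(\xi-\lambda_j\eta_j),
\end{equation*}
a Schwartz function concentrated near $\lambda_j\eta_j$, together with $\sum j^{-2}<\infty$ and $\lambda_j\to\infty$ (so that no finite $\xi$ is an accumulation point of $\{\lambda_j\eta_j\}$), one derives rapid decay of $\widehat{\psi_R T_0}$ on a conic neighbourhood of $\eta$ and smoothness on all of $\RR^d$, ruling out both $\psi e$-type contributions at $(\omega,\eta)$ and $e$-type contributions $(\omega,\xi)\in\SSS^{d-1}\times\RR^d$. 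The main obstacle is exactly this uniform control of the tail of the series in both spatial and frequency variables, which requires a careful choice of $\lambda_j$ relative to how densely $\{(x_j,\eta_j)\}$ fills $\Gamma$.
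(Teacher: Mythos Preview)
Your decomposition $T=T_0+T_\infty$ differs from the paper's approach, which uses a \emph{single} distribution $T$, namely exactly H\"ormander's function from \cite[Theorem 8.1.5]{hoerm1},
\[
T(x)=\sum_{k\ge1} k^{-2}\,\phi\bigl(k(x-x_k)\bigr)\,e^{ik^3 x\cdot\eta_k},
\]
and shows that its global wave front set already equals $\overline{\Gamma}$. In particular, the $\SSS^{d-1}\times\SSS^{d-1}$ part of $\WF(T)$ arises automatically as the limit set of the local singularities, so your extra summand $T_\infty$ is redundant: adding it does no harm, but it does not relieve you of the real work, which is to show that the asymptotic wave front of $T_0$ is contained in $\overline{\Gamma}$ (and, crucially, that $\WF(T_0)\cap(\SSS^{d-1}\times\RR^d)=\emptyset$, since $\overline{\Gamma}$ has no points there).

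The obstacle you flag at the end is genuine, and the paper resolves it by a device you are missing: one can choose the dense sequence $(x_k,\eta_k)\in\Gamma$ subject to the growth constraint $|x_k|\le\log k$. With this bound in hand and the scaled bump $\phi(k(x-x_k))$, one has
\[
\widehat{T}(\xi)=\sum_{k\ge1} k^{-2-d}\,\widehat{\phi}\bigl((\xi-k^3\eta_k)/k\bigr)\,e^{ix_k\cdot(k^3\eta_k-\xi)},
\]
and differentiating in $\xi$ produces factors $x_k^\gamma$ of size at most $(\log k)^{|\gamma|}$, which are absorbed by a single spare power $k^{-1}$; this gives $\widehat{T}\in\Sm$ directly, hence $\WF(T)\cap(\SSS^{d-1}\times\RR^d)=\emptyset$. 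Without such a coupling between the index and $|x_j|$, your sequence $|x_j|$ may grow arbitrarily fast, and simply pushing $\lambda_j\to\infty$ ``sufficiently fast'' is not by itself a proof: you would still need to exhibit an explicit relation (e.g.\ $\lambda_j\ge e^{|x_j|}$) and verify both that the derivative series for $\widehat{\psi_R T_0}$ converge and that $\WF_{cl}(T_0)=\Gamma$ survives that choice. Note also that the paper uses the scaled bump $\phi(k(\cdot-x_k))$ rather than your fixed $\phi(\cdot-x_j)$; the shrinking support is part of what pins the singular support to $\pi_1(\Gamma)$ exactly.
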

\begin{proof}
Choose, as it is possible, a sequence $(x_k,\eta_k)\in\Gamma$ such that every $(x,\eta)\in\Gamma$ is the limit of a subsequence and such that $|x_k|$ is bounded by $\log k$. Let $\phi\in\Sm_c$ with $\widehat{\phi}(0)=1$ and set
\begin{equation}
\label{eq:defT}
	T(x):=\sum_{k=1}^\infty k^{-2}\phi\left(k(x-x_k)\right) e^{ik^3 x\cdot\eta_k}.
\end{equation}
First of all, we remark that \eqref{eq:defT} is precisely the function defined in the proof of \cite[Theorem 8.1.5]{hoerm1}. $T$ is continuous and bounded, thus it is a tempered distribution: we claim that it fulfills all the required properties.
\begin{enumerate}
	\item $\WF(T)\cap\left(\RR^d\times\SSS^{d-1}\right)=\Gamma.$\\
	This is the statement of \cite[Theorem 8.1.5]{hoerm1} mentioned above.
	\item $\WF(T)\cap\left(\SSS^{d-1}\times\RR^d\right)=\emptyset.$\\
	This is equivalent to the assertion $\widehat{T}\in\Sm$. First note that
	\begin{equation}
		\label{eq:defFT}
		\widehat{T}(\xi)=\sum_{k=1}^\infty 
		k^{-2-d}\,\widehat{\phi}\left(\left(\xi-k^3\eta_k\right)/k\right) e^{i x_k\cdot(k^3\eta_k-\xi)}.
	\end{equation}
	Obviously, the series \eqref{eq:defFT} converges absolutely and uniformly, giving $\widehat{T}\in\mathcal{C}$,
	since $\widehat{\phi}\in\Sw\subset L^\infty$. The same property holds for the series 
	of the derivatives: in fact, for any $\alpha\in\ZZ_+^d$, 
	\begin{align*}
		\partial_{\xi}^\alpha 
		&\left[k^{-2-d}\widehat{\phi}\left(\left(\xi-k^3\eta_k\right)/k\right) e^{i x_k\cdot(k^3\eta_k-\xi)}
		\right]\in
		\\
		&\in\sspan\left[ k^{-2-d}(\partial^\beta\widehat{\phi})\left(\left(\xi-k^3\eta_k\right)/k\right)\,
		e^{i x_k\cdot(k^3\eta_k-\xi)}\,k^{-|\beta|}\,x_k^\gamma \right]_{\beta+\gamma=\alpha},
	\end{align*}
	so, in view of the boundedness of the sequence $(\log k)^q/k$, $q\in\RR$, it follows that the 
	$L^\infty$-norm of each term in the sum giving $\partial^\alpha\widehat{T}$ is bounded by the
	terms of the sequence
	\[
		C_\alpha \, \rho_{|\alpha|}\big(\widehat{\phi}\big)\, k^{-1-d} \,
		\max_{\gamma\le\alpha} \sup_k \left[ k^{-1}\,(\log k)^{|\gamma|} \right]
		\le k^{-1-d} \, E_{\alpha} \, \rho_{|\alpha|}\big(\widehat{\phi}\big),
	\]
	where $C_\alpha,E_\alpha>0$ are suitable constants, depending only on $\alpha$, and
	\begin{equation}
		\label{eq:schwsn}
		\rho_p(f)=\sum_{|\alpha+\beta|\leq p} \sup_{x\in\RR^d} \left|x^\alpha \partial^\beta f(x)\right|, \quad f\in\Sw.
	\end{equation}
	Thus $\widehat{T}\in\Sm$, as claimed.
	\item $\WF(T)\cap\left(\SSS^{d-1}\times\SSS^{d-1}\right)=\overline{\Gamma}\cap\left(\SSS^{d-1}\times\SSS^{d-1}\right)$.\\
	As $WF(T)$ is a closed set in $\BB^d\times \BB^d$ containing $\Gamma$, the inclusion
	\[
		\overline{\Gamma}\cap\left(\SSS^{d-1}\times\SSS^{d-1}\right)\subseteq \WF(T)\cap\left(\SSS^{d-1}\times\SSS^{d-1}\right)
	\]
	is trivial. If $\SSS^{d-1}\times\SSS^{d-1}\ni(\omega,\eta)\notin\overline{\Gamma}$, then it is possible to find neighbourhoods 
	$U,\ V\subset\SSS^{d-1}$ of $\omega$ and $\eta$, respectively, such that the closure of $U\times V$ does not meet 
	$\overline{\Gamma}\cap(\SSS^{d-1}\times\SSS^{d-1})$. Choosing asymptotic cut-offs $\psi_U,\ \psi_V$, supported in the interior
	of $U$ and $V$, respectively, it is possible to show, by an argument analogous to the one in \cite[Theorem 8.1.5]{hoerm1}, 
	that $\psi_V\mathcal{F}\{\psi_U T\}\in\Sw$, that is, $\SSS^{d-1}\times\SSS^{d-1}\ni(\omega,\eta)\notin \WF(T)$. 
\end{enumerate}
The proof is complete.
\end{proof}

\begin{theorem}
\label{thm:mainexistence}
Let $\Gamma\subset\partial(\BB^d\times\BB^d)$ be a closed set. Then, there exists $T\in\Swd(\RR^d)$ such that $\WF(T)=\Gamma$.
\end{theorem}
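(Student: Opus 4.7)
I would decompose $\Gamma$ along the three faces of $\partial(\BB^d\times\BB^d)$ and construct one distribution for each, then sum. Set
\[
\Gamma_1 = \Gamma\cap(\RR^d\times\SSS^{d-1}),\quad \Gamma_2 = \Gamma\cap(\SSS^{d-1}\times\RR^d),\quad \Gamma_3 = \Gamma\cap(\SSS^{d-1}\times\SSS^{d-1}).
\]
Since $\Gamma$ is closed in $\partial(\BB^d\times\BB^d)$, any accumulation of $\Gamma_1$ or $\Gamma_2$ inside $\BB^d\times\BB^d$ must land in $\Gamma_3$, so the closures (taken in $\BB^d\times\BB^d$) satisfy $\overline{\Gamma_1}\cup\overline{\Gamma_2}\cup\Gamma_3=\Gamma$.

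Lemma \ref{lem:hoermexistence} applied directly to $\Gamma_1$ gives $T_1$ with $\WF(T_1)=\overline{\Gamma_1}$. For $T_2$ I would use the Fourier symmetry \eqref{eq:wffwf}: the image $\Gamma_2^\sharp = \{(q,-p)\colon (p,q)\in\Gamma_2\}$ is closed in $\RR^d\times\SSS^{d-1}$, so a second application of Lemma \ref{lem:hoermexistence} yields $S$ with $\WF(S)=\overline{\Gamma_2^\sharp}$; setting $T_2=\mathcal{F}^{-1}S$ and using \eqref{eq:wffwf} gives $\WF(T_2)=\overline{\Gamma_2}$. Finally, Corollary \ref{cor:Tdef} supplies $T_3$ with $\WF(T_3)=\Gamma_3$. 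Putting $T=T_1+T_2+T_3$, the inclusion $\WF(T)\subseteq\Gamma$ is then immediate from subadditivity of $\WF$ and the identity $\overline{\Gamma_1}\cup\overline{\Gamma_2}\cup\Gamma_3=\Gamma$.

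For the reverse inclusion $\Gamma\subseteq\WF(T)$ I would argue case by case, exploiting the fact that the ``silent'' components in each case are not merely microlocally smooth but \emph{globally} smooth: $T_2,T_3\in\Sm(\RR^d)$ because their wave front sets avoid $\RR^d\times\BB^d$, and $\widehat{T_1},\widehat{T_3}\in\Sm(\RR^d)$ for the same reason after applying \eqref{eq:wffwf}, using \eqref{eq:prwf}. Hence, for $(p,q)\in\Gamma_1$, the local singularity of $T$ at $p$ coincides with that of $T_1$, so $(p,q)\in\WF(T)$; dually, for $(p,q)\in\Gamma_2$, $\widehat{T}$ has the same local singularity as $\widehat{T_2}$ at $(q,-p)$, whence $(p,q)\in\WF(T)$ via \eqref{eq:wffwf}. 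For $(p,q)\in\Gamma_3\setminus(\overline{\Gamma_1}\cup\overline{\Gamma_2})$ the point avoids $\WF(T_1)\cup\WF(T_2)\supseteq\WF(T_1+T_2)$, so rewriting $T_3=T-(T_1+T_2)$ and using subadditivity forces $(p,q)\in\WF(T)$. The remaining points in $(\overline{\Gamma_1}\cup\overline{\Gamma_2})\cap\Gamma_3$ are then absorbed by the closedness of $\WF(T)$ in Proposition \ref{pr:wfeign}, once $\Gamma_1,\Gamma_2\subseteq\WF(T)$ has been secured.

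The main obstacle I anticipate is precisely this lower bound at the corner $\SSS^{d-1}\times\SSS^{d-1}$, where all three summands may simultaneously be singular and naive subadditivity would permit unwanted cancellations. The strategy above sidesteps this by leaning on the global smoothness of the non-dominant pieces in the $\Gamma_1$- and $\Gamma_2$-cases, and invoking closedness only after the two ``interior'' faces have been secured. A minor, but finicky, bookkeeping point is to align the sign conventions in \eqref{eq:wffwf} so that $T_2=\mathcal{F}^{-1}S$ lands on $\overline{\Gamma_2}$ and not on a reflected variant.
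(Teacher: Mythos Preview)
Your proposal is correct and follows essentially the same route as the paper: decompose $\Gamma$ along the three faces, build $T_1$ via Lemma~\ref{lem:hoermexistence}, $T_2$ via Fourier symmetry applied to the same lemma, $T_3$ via Corollary~\ref{cor:Tdef}, and sum. Your treatment of the non-cancellation issue is in fact more explicit than the paper's, which dispatches it in one line by appealing to the closedness of $\WF(T)$; your observation that $T_2,T_3\in\Sm$ and $\widehat{T_1},\widehat{T_3}\in\Sm$ (so the ``silent'' summands cannot kill the classical singularities on the interior faces) is exactly the mechanism behind that one-liner.
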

\begin{proof} 
The argument combines the results proved above.
First, Lemma \ref{lem:hoermexistence} yields a tempered distribution $T_\psi$ with classical wave front set $\Gamma_\psi=\Gamma\cap\left(\RR^d\times\SSS^{d-1}\right)$. Remember that $\WF(T_\psi)\cap(\SSS^{d-1}\times\RR^d)=\emptyset$.\\
Then, define $T_e$ with $\WF(T_e)=\overline{\Gamma\cap\left(\SSS^{d-1}\times\RR^d\right)}$ by repeating the construction of Lemma
\ref{lem:hoermexistence} for $\Gamma_e=\Gamma\cap\left(\SSS^{d-1}\times\RR^d\right)$, using Fourier inversion and \eqref{eq:wffwf}. Here we notice that, by construction,
\[
	\WF(T_\psi+T_e)\cap[(\RR^d\times\SSS^{d-1})\cup(\SSS^{d-1}\times\RR^d)]
	=
	\Gamma\cap[(\RR^d\times\SSS^{d-1})\cup(\SSS^{d-1}\times\RR^d)].
\]
Finally, take the distribution $T_{\psi e}$ that Corollary \ref{cor:Tdef} yields with
\[
	\WF(T_{\psi e})=\Gamma_{\psi e}=\Gamma\cap\left(\SSS^{d-1}\times\SSS^{d-1}\right).
\]
Setting $T:=T_e+T_\psi+T_{\psi e}$, we then have $\WF(T)=\Gamma$. In fact, by adding up the three temperate distributions listed above, no asymptotic singularities can cancel, as $\WF(T)$ is a closed set. The proof is complete.
\end{proof}

% Oscillatory Integrals : definition
%
\section{Tempered distributions associated \\with oscillatory integrals}
\label{sec:oscidef}
\subsection{\texorpdfstring{$\SG$}{SG}-symbols and phase functions} 
We give a definition of $\SG$-symbol where
the variable $x$ and the covariable $\xi$ belong to Euclidean spaces of possibly different dimensions $d$ and $s$.
For more details on the $\SG$-calculus, both for pseudodifferential as well as Fourier integral operators, and its applications, 
see, e.g., \cite{AndPhD, cordes, BaCo1, CaRo, cor, Co99a, CoMa12, ES, rodino2, Parenti, RS06a, Sc} and the references
quoted therein.

\begin{definition}
\label{def:sgs}
A $\SG$-\emph{class symbol $a$ of order $(m,\mu)\in\RR^2$ on $\RR^d\times\RR^s$} is a $\Sm$-map $a:\RR^d\times\RR^s\rightarrow \CC$ satisfying, for all multiindices $\alpha\in\ZZ_+^d,\ \beta\in\ZZ_+^s$ and suitable constants
$C_{\alpha,\beta}>0$, for all $x\in\RR^d$, $\xi\in\RR^s$,
\begin{equation}
|\partial_x^\alpha\partial_\xi^\beta a(x,\xi)|\leq C_{\alpha,\beta} \langle x\rangle^{m-|\alpha|} \langle \xi \rangle^{\mu-|\beta|}.
\label{eq:sgineq}
\end{equation}
Denote the space of all such functions by $\SGmm=\SGmm(\RR^d,\RR^s)$.
Set $\SGjm:=\bigcup_{m\in\RR} \SGmm$ and $\SGmj=\bigcup_{\mu\in\RR} \SGmm$ and, accordingly, the space of all $\SG$-symbols $\SG(:=\SGjj):=\bigcup_{m,\mu\in\RR} \SGmm$.\\
Set also $\SGim:=\bigcap_{m\in\RR} \SGmm$,  $\SGmi:=\bigcap_{\mu\in\RR} \SGmm$ and 
$$
\SGii:=\bigcap_{m,\mu\in\RR} \SGmm=\Sw(\RR^{d+s}).
$$
For each fixed $(m,\mu)\in\RR^2$ define a family of $\SG$-symbol seminorms $\|.\|_p$, $p\in\ZZ_+$, through the quantities
$$
\|a\|_{m,\mu,\alpha,\beta}=\sup_{\gamma\le\alpha,\ \delta\le\beta}\ \sup_{(x,\xi)\in\RR^d\times\RR^s} |\partial_x^{\gamma}\partial_\xi^{\delta} a(x,\xi)|\langle x\rangle^{-m+|\gamma|} \langle \xi \rangle^{-\mu+|\delta|},
$$
$a\in\SGmm$, $\alpha\in\ZZ_+^d,\ \beta\in\ZZ_+^s$.
\end{definition}

\begin{remark}
Due to their (asymptotic) homogenety properties, it is easy to see that the asymptotic cut-offs $\psi_R$ introduced in Section \ref{sec:prel} are $\SG$-symbols of order $(0,0)$.
\end{remark}

\begin{proposition}
\label{pr:symbdense}
For each $a\in\SGmm(\RR^d,\RR^s)$ there exists a sequence $\{a_j\}_{j\in\NN}$ of symbols in $\SGii(\RR^d,\RR^s)$, bounded in $\SGmm(\RR^d,\RR^s)$ and converging to $a$ in the topology of ${{\mathbf{SG}^{m^\prime,\mu^\prime}}}(\RR^d,\RR^s)$, for any $m^\prime>m$, $\mu^\prime>\mu$.
\end{proposition}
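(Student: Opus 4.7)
The plan is to regularise $a$ by multiplication with a compactly supported smooth cut-off in the joint variables, producing a sequence in $\Sw(\RR^{d+s})=\SGii$, and then to verify two facts: uniform boundedness in $\SGmm$, and convergence in the strictly weaker topology $\SG^{m',\mu'}$. Concretely, I would fix cut-offs $\chi_1\in\Sm_c(\RR^d)$, $\chi_2\in\Sm_c(\RR^s)$ identically $1$ near the origin, and set
\[
a_j(x,\xi):=\chi_1(x/j)\,\chi_2(\xi/j)\,a(x,\xi),\qquad j\in\NN.
\]
Each $a_j$ is smooth with compact support, so lies in $\Sw(\RR^{d+s})=\SGii$.

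For uniform $\SGmm$-boundedness, I would apply Leibniz to $\partial^\alpha_x\partial^\beta_\xi a_j$, obtaining a finite sum of terms
\[
j^{-|\alpha_1|-|\beta_1|}\,(\partial^{\alpha_1}\chi_1)(x/j)\,(\partial^{\beta_1}\chi_2)(\xi/j)\,\partial^{\alpha_2}_x\partial^{\beta_2}_\xi a(x,\xi),
\]
with $\alpha_1+\alpha_2=\alpha$ and $\beta_1+\beta_2=\beta$. The term with $\alpha_1=\beta_1=0$ is already pointwise dominated by a constant times $\langle x\rangle^{m-|\alpha|}\langle\xi\rangle^{\mu-|\beta|}$ via the symbol estimate \eqref{eq:sgineq}. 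For the remaining ``commutator'' terms, the derivatives of $\chi_1$ are supported on $\{|x|\sim j\}$, so $\langle x\rangle^{|\alpha_1|}\lesssim j^{|\alpha_1|}$, which exactly cancels the factor $j^{-|\alpha_1|}$ and restores the full weight $\langle x\rangle^{m-|\alpha|}$ out of $\langle x\rangle^{m-|\alpha_2|}=\langle x\rangle^{m-|\alpha|+|\alpha_1|}$; the symmetric remark handles the $\xi$-variable. Thus $\sup_j\|a_j\|_{m,\mu,\alpha,\beta}<\infty$ for every $\alpha,\beta$.

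For convergence $a_j\to a$ in $\SG^{m',\mu'}$ with $m'>m$, $\mu'>\mu$, I would write $r_j:=a-a_j=\bigl(1-\chi_1(x/j)\chi_2(\xi/j)\bigr)a$ and apply the same Leibniz decomposition. The key observation is that $\supp(1-\chi_1(\cdot/j)\chi_2(\cdot/j))\subset\{|x|\ge j\}\cup\{|\xi|\ge j\}$, while the commutator terms are supported on $\{|x|\sim j\}\cup\{|\xi|\sim j\}$. After absorbing the $j$-factors as in the boundedness step, every term is bounded pointwise by $C\langle x\rangle^{m-m'}\langle\xi\rangle^{\mu-\mu'}$ on the corresponding support set. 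Because the exponents $m-m'$ and $\mu-\mu'$ are strictly negative, one has $\langle x\rangle^{m-m'}\le 1$ and $\langle\xi\rangle^{\mu-\mu'}\le 1$ pointwise, so on $\{|x|\ge j\}$ the product is dominated by $j^{m-m'}$, and on $\{|\xi|\ge j\}$ by $j^{\mu-\mu'}$; both tend to $0$. Hence each $\SG^{m',\mu'}$-seminorm of $r_j$ vanishes as $j\to\infty$.

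The only real obstacle is this last step: one might initially worry that the ``mixed'' region, where only one of $|x|,|\xi|$ is forced to be large, fails to produce simultaneous decay in both variables. The strict inequalities $m'>m$ and $\mu'>\mu$ resolve this cleanly, since the variable that is \emph{not} forced to be large only contributes a factor $\le 1$; this is precisely the reason one cannot strengthen the statement to convergence in the original topology of $\SGmm$.
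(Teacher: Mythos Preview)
Your argument is correct and is exactly the standard dilation cut-off construction; the paper itself gives no proof, merely referring the reader to \cite[Proposition 1.1.5]{rodino2}, where essentially this same argument appears. There is nothing to add or compare.
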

\begin{proof}
Confer the proof of \cite[Proposition 1.1.5]{rodino2}.
\end{proof}

\begin{definition}
A Symbol $a$ in $\SGmm(\RR^d,\RR^s)$ is said to be (globally $\SG$-)elliptic if there exists $R>0$ such that 
$$
	a(x,\xi)\gtrsim\langle x\rangle^m\langle \xi\rangle^\mu\text{ for }
	|x|+|\xi|\ge R.
$$
Let $(p,\omega)\in \partial\left(\BB^d\times\BB^s\right)$. Then $a$ is said to be elliptic at $(p,\omega)$ iff there exists an open
neighbourhood $U$ of $(p,\omega)$ in $\BB^d\times\BB^s$ s.t. 
\begin{equation}
\forall\ (x,\xi)\in U\cap(\RR^d\times\RR^s):\quad a(x,\xi)\gtrsim\langle x\rangle^m\langle \xi\rangle^\mu.
\label{eq:ellip}
\end{equation}
\end{definition}

A phase function $\varphi$ will be called admissible, in the present context, when it is a real-valued $\SG$-symbol
of positive order which satisfies a suitable $\SG$-ellipticity condition, as explained in the next definition.

\begin{definition}
\label{def:phase}
An admissible inhomogeneous \emph{$\SG$-phase function} $\varphi$ is a real-valued $\SG$-symbol of order 
$(n,\nu)\in\RR_+^2$ such that, defining
\begin{equation}
	\label{eq:defeta}
	\eta(x,\xi):=\langle x\rangle^2\,|\nabla_x \varphi(x,\xi)|^2+\langle \xi\rangle^2\,|\nabla_\xi \varphi(x,\xi)|^2,
\end{equation}
the non-degeneracy condition\footnote{That is, $\eta\in\SG^{2n,2\nu}$ is elliptic.}
\begin{equation}
\exists R>0\text{ such that }\eta(x,\xi) \gtrsim \langle x\rangle^{2n} \langle \xi\rangle^{2\nu} \text{ when } |x|+|\xi|\ge R
\label{eq:phaseineq}
\end{equation}
 holds true.
\end{definition}

\noindent
Note that the condition \eqref{eq:phaseineq} ensures that $\varphi$ is a symbol of ``genuine'' order $(n,\nu)$, that is, not of an actually lower one. Later on, we will introduce sets which encode if one of the summands in \eqref{eq:defeta} does not scale as the righthand side of \eqref{eq:phaseineq}: as we will see, such sets will be strictly related to the (global) singularities of the temperate distributions which we introduce in the next subsection.

\subsection{The class of \texorpdfstring{$\SG$}{SG}-oscillatory integrals}
We now show that the admissible phase functions described above, together with amplitudes from the $\SG$-symbol
classes, give rise to well-defined tempered distributions, in the form of oscillatory integrals globally defined on $\RR^d$.

\begin{definition}
\label{def:osciint}
A formal $\SG$-\emph{oscillatory integral with inhomogeneous phase function} is an expression of the form
\begin{equation}
\label{eq:formosci}
[I_\varphi(a)](x):=\int_{\RR^s} e^{i\varphi(x,\xi)}\ a(x,\xi)\ \dd\xi,
\end{equation}
where $\varphi$ is an admissible inhomogeneous $\SG$-phase function and $a$ is a $\SG$-symbol.
\end{definition}

\begin{theorem}
\label{thm:oscidef}
With any fixed admissible inhomogeneous $\SG$-phase function $\varphi$ of order $(n,\nu)$ we may associate a map 
$$
	I_\varphi:\SGjj(\RR^d,\RR^s)\rightarrow \Swd(\RR^d),
$$ 
uniquely determined by the the following properties:
\begin{enumerate}
	\item $a\mapsto I_\varphi(a)$ is a linear map;
	\item If $a\in\SGii(\RR^d,\RR^s)$, then $I_\varphi(a)$ coincides with the (absolutely convergent) integral \eqref{eq:formosci};
	\item the restriction of $I_\varphi$ to $\SGmm(\RR^d,\RR^s)$ is a continuous map 
	$$
		\SGmm(\RR^d,\RR^s)\rightarrow\ \Swd(\RR^d).
	$$
\end{enumerate}
We call the distribution $I_\varphi(a)$ a $\SG$-\emph{oscillatory integral}.
\end{theorem}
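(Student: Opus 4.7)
The plan is to implement H\"ormander's classical oscillatory-integral regularisation inside the $\SG$-framework, exploiting the two-sided $\SG$-ellipticity of $\eta$ granted by Definition \ref{def:phase}. The key object is a first-order differential operator $L$ satisfying $L\,e^{i\varphi}=e^{i\varphi}$, whose formal transpose strictly decreases the $\SG$-order in both components. Pick an asymptotic cut-off $\chi\in\SG^{0,0}$ with $\chi\equiv 1$ for $|x|+|\xi|\ge 2R$ and $\chi\equiv 0$ for $|x|+|\xi|\le R$ (with $R$ as in \eqref{eq:phaseineq}) and set
\begin{equation*}
L:=(1-\chi)+\frac{\chi}{i\,\eta}\Bigl(\langle x\rangle^{2}\,\nabla_{x}\varphi\cdot\nabla_{x}+\langle\xi\rangle^{2}\,\nabla_{\xi}\varphi\cdot\nabla_{\xi}\Bigr).
\end{equation*}
A direct computation yields $L\,e^{i\varphi}=e^{i\varphi}$. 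Since $\chi/\eta\in\SG^{-2n,-2\nu}$ by ellipticity of $\eta$ on $\supp\chi$, the first-order coefficients of $L$ lie in $\SG^{1-n,-\nu}$ and $\SG^{-n,1-\nu}$, respectively, and a Leibniz computation shows that the formal transpose $L^{t}:\SGmm\to\mathbf{SG}^{m-n,\mu-\nu}$ is continuous in the $\SG$-seminorms, the essential point being $n,\nu>0$.

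For $a\in\SGii=\Sw(\RR^{d+s})$ the integral $[I_{\varphi}(a)](x):=\int e^{i\varphi(x,\xi)}\,a(x,\xi)\,d\xi$ converges absolutely, defines an element of $\Sw(\RR^{d})$, and gives a continuous linear map on $\SGii$; this establishes (2). For general $a\in\SGmm$ and $u\in\Sw(\RR^{d})$, I fix $N\in\NN$ with $\mu-N\nu<-s$ and define
\begin{equation*}
\langle I_{\varphi}(a),u\rangle:=\iint e^{i\varphi(x,\xi)}\,(L^{t})^{N}\!\bigl(a(x,\xi)\,u(x)\bigr)\,d\xi\,dx.
\end{equation*}
Regarding $u$ as an element of $\mathbf{SG}^{-\infty,0}(\RR^{d},\RR^{s})$ (trivially extended in $\xi$) one has $au\in\mathbf{SG}^{-\infty,\mu}$ and $(L^{t})^{N}(au)\in\mathbf{SG}^{-\infty,\mu-N\nu}$, so the integrand is absolutely integrable. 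Iterated integration by parts (justified for $a\in\SGii$ by absolute convergence) together with $L\,e^{i\varphi}=e^{i\varphi}$ shows that this formula reproduces the preceding definition on $\SGii$ and is independent of $N$. The resulting estimate $|\langle I_{\varphi}(a),u\rangle|\le C\,\|a\|_{p}\,\rho_{q}(u)$, for suitable indices $p,q$ depending on $(m,\mu,N)$, simultaneously gives $I_{\varphi}(a)\in\Swd(\RR^{d})$ and the continuity claimed in (3).

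For uniqueness, any two maps $I_{\varphi}^{(1)},I_{\varphi}^{(2)}$ satisfying (1)--(3) agree on $\SGii$ by (2). Given $a\in\SGmm$, Proposition \ref{pr:symbdense} supplies a sequence $a_{j}\in\SGii$ bounded in $\SGmm$ and converging to $a$ in $\mathbf{SG}^{m',\mu'}$ for any $m'>m$, $\mu'>\mu$. Applying the integration-by-parts formula above to each $a_{j}$ and invoking dominated convergence (the $\SGmm$-bound supplying the dominating function, the Schwartz decay of $u$ handling the $x$-integration) yields $\langle I_{\varphi}^{(k)}(a_{j}),u\rangle\to\langle I_{\varphi}^{(k)}(a),u\rangle$ for $k=1,2$, hence $I_{\varphi}^{(1)}=I_{\varphi}^{(2)}$ on all of $\SG$.

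The delicate ingredient is the order-reduction $L^{t}:\SGmm\to\mathbf{SG}^{m-n,\mu-\nu}$, which requires \emph{simultaneous} decay in $x$ and $\xi$. This is precisely what the weights $\langle x\rangle^{2},\langle\xi\rangle^{2}$ in the definition \eqref{eq:defeta} of $\eta$, together with the two-sided ellipticity condition \eqref{eq:phaseineq}, are designed to provide — in sharp contrast to the classical (homogeneous) oscillatory integral theory, where only $\xi$-decay needs to be controlled. Once this order-reduction is in place, the rest of the argument is a standard density-and-continuity exercise.
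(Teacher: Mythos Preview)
Your proposal is correct and follows essentially the same route as the paper: construct a first-order differential operator fixing $e^{i\varphi}$ (your $L$ is precisely the paper's ${}^{t}P$, so your $L^{t}$ is their $P$), use it to gain simultaneous $\SG$-order reduction in $x$ and $\xi$, derive the seminorm estimate, and conclude existence and uniqueness via the density statement of Proposition~\ref{pr:symbdense}. The only cosmetic difference is that the paper packages the operator construction and the estimate into two separate lemmas; also, for uniqueness you may invoke property~(3) directly on $\mathbf{SG}^{m',\mu'}$ rather than re-running dominated convergence, since any candidate $I_{\varphi}^{(k)}$ is already assumed continuous there.
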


The outline of this proof is classical, confer \cite[Theorem 1.1]{grsj}, \cite[Theorem 2.9]{zahn} and \cite[Theorem IX.47]{rs2}. In \cite[Theorem 8.1.9]{hoerm1} the analysis is carried out by means of the stationary phase method. As we need to look at unbounded sets for the asymptotic part of the wave front set, here we do not follow that approach.
We split the proof of Theorem \ref{thm:oscidef} into three steps. First of all, we prove a simple and useful lemma, which guarantees the existence of a linear differential operator needed to regularize the oscillatory integral \eqref{eq:formosci}.

\begin{lemma}
\label{lem:vdef}
Let $\varphi$ be a given admissible inhomogeneous $\SG$-phase function of order $(n,\nu)$. Then, there exists
$u_j\in\SG^{-n+1,-\nu}(\RR^d,\RR^s)$, $j=1,\dots,s$,
$v_k\in\SG^{-n,-\nu+1}(\RR^d,\RR^s)$, $k=1,\dots,d$, 
and $w\in\mathbf{SG}^{-n,-\nu}$ $(\RR^d,\RR^s)$, such that the linear differential operator
\begin{equation}
	P=u\cdot\nabla_\xi+v\cdot\nabla_x+w,
	\label{eq:vdef}
\end{equation}
with adjoint (with respect to $\SGii(\RR^d,\RR^s)$)
\begin{equation}
	{^t}Pf=-\nabla_\xi\cdot(uf)-\nabla_x\cdot(vf)+wf
	\label{eq:vt}
\end{equation}
fulfills
\begin{equation}
	\label{eq:deftv}
	{^t}Pe^{i\varphi(x,\xi)}=e^{i\varphi(x,\xi)}.
\end{equation}
Furthermore, $P$ is a continuous map 
$$
P\colon\SGmm(\RR^d,\RR^s)\to\SG^{m-n,\mu-\nu}(\RR^d,\RR^s).
$$
\end{lemma}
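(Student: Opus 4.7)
The plan is to look for $P$ in the form ${}^t\tilde P$, where $\tilde P$ is a first-order operator satisfying $\tilde P\,e^{i\varphi}=e^{i\varphi}$; then \eqref{eq:deftv} holds automatically. Since $\partial_{\xi_j}e^{i\varphi}=i\,(\partial_{\xi_j}\varphi)\,e^{i\varphi}$ and $\partial_{x_k}e^{i\varphi}=i\,(\partial_{x_k}\varphi)\,e^{i\varphi}$, the construction of $\tilde P$ reduces to choosing vector-valued coefficients $\tilde u$, $\tilde v$ such that $i(\tilde u\cdot\nabla_\xi\varphi+\tilde v\cdot\nabla_x\varphi)=1$ outside a bounded set, and then compensating on the bounded set with a compactly supported zero-order term: the $\SG$-ellipticity of $\eta$ provided by~\eqref{eq:phaseineq} is exactly what makes this inversion possible.

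Concretely, I would fix $\chi\in\Sm(\RR^d\times\RR^s)\cap\SG^{0,0}(\RR^d,\RR^s)$ with $\chi\equiv 1$ on $\{|x|+|\xi|\ge R+1\}$ and $\chi\equiv 0$ on $\{|x|+|\xi|\le R\}$, and put
\[
	\tilde u_j:=\frac{\chi\,\langle\xi\rangle^2\,\partial_{\xi_j}\varphi}{i\,\eta},\qquad
	\tilde v_k:=\frac{\chi\,\langle x\rangle^2\,\partial_{x_k}\varphi}{i\,\eta},\qquad
	\tilde w:=1-\chi.
\]
Using the very definition of $\eta$, one reads off $\tilde u\cdot\nabla_\xi\varphi+\tilde v\cdot\nabla_x\varphi=\chi/i$, whence $(\tilde u\cdot\nabla_\xi+\tilde v\cdot\nabla_x)e^{i\varphi}=\chi\,e^{i\varphi}$ and $\tilde P:=\tilde u\cdot\nabla_\xi+\tilde v\cdot\nabla_x+\tilde w$ satisfies $\tilde P\,e^{i\varphi}=(\chi+(1-\chi))e^{i\varphi}=e^{i\varphi}$. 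Taking the transpose then delivers $P:={}^t\tilde P$ of the form~\eqref{eq:vdef}, with coefficients $u=-\tilde u$, $v=-\tilde v$, and $w=\tilde w-\nabla_\xi\cdot\tilde u-\nabla_x\cdot\tilde v$.

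What remains is to verify that $u$, $v$, $w$ lie in the asserted $\SG$-classes and that $P$ is continuous from $\SGmm$ to $\SG^{m-n,\mu-\nu}$. On $\supp\chi$, the elliptic estimate \eqref{eq:phaseineq} gives $1/\eta\in\SG^{-2n,-2\nu}$; the symbol-class membership of $\tilde u_j$ and $\tilde v_k$ then follows by counting the orders of $\chi$, $\langle x\rangle^2$, $\langle\xi\rangle^2$, $\partial_{\xi_j}\varphi\in\SG^{n,\nu-1}$, $\partial_{x_k}\varphi\in\SG^{n-1,\nu}$, and $1/\eta$. A further derivative lands $\nabla_\xi\cdot\tilde u$ and $\nabla_x\cdot\tilde v$ in $\SG^{-n,-\nu}$; since $\tilde w=1-\chi\in\SG^{-\infty,-\infty}\subset\SG^{-n,-\nu}$, the coefficient $w$ lies in $\SG^{-n,-\nu}$. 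Continuity of $P\colon\SGmm\to\SG^{m-n,\mu-\nu}$ then reduces to a term-by-term verification, each summand lowering the bi-order by exactly $(n,\nu)$.

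The only nontrivial technical ingredient is the symbol estimate $1/\eta\in\SG^{-2n,-2\nu}$ on $\supp\chi$, which one obtains from~\eqref{eq:phaseineq} via a Fa\`a di Bruno expansion of derivatives of $\eta^{-1}$ in terms of derivatives of $\eta$; this is a standard consequence of $\SG$-ellipticity, see e.g.~\cite{cordes,rodino2}. Once it is in hand, the remainder of the proof is a routine bookkeeping of orders via the Leibniz rule.
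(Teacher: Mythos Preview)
Your proposal is correct and essentially coincides with the paper's own argument: modulo the inverted convention for the cut-off (your $\chi$ is the paper's $1-\chi$), your formulas for $\tilde u,\tilde v,\tilde w$ and the resulting $u=-\tilde u$, $v=-\tilde v$, $w=\tilde w-\nabla_\xi\!\cdot\tilde u-\nabla_x\!\cdot\tilde v$ reproduce verbatim the paper's choice of coefficients, and your verification of ${}^tP e^{i\varphi}=e^{i\varphi}$ via $\tilde P e^{i\varphi}=e^{i\varphi}$ is just the paper's computation read in the opposite direction. The only remark is that, if you actually carry out the order count, you will find $u_j\in\SG^{-n,-\nu+1}$ and $v_k\in\SG^{-n+1,-\nu}$, i.e.\ the two bi-orders in the statement are interchanged (a harmless typo, also present in the paper's proof); either way each summand of $P$ lowers the bi-order by exactly $(n,\nu)$, so the continuity claim is unaffected.
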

\begin{proof}
The proof is essentially a $\SG$-variant of, e.g., the one from \cite[Lemma 2.10]{zahn}.
Consider the $\SG$-symbol $\eta$ introduced in $\eqref{eq:defeta}$ and take a cut-off function $\chi$ such that $\chi\equiv 1$ for $|x|+|\xi|\leq R$ and $\chi\equiv 0$ for $|x|+|\xi|\geq R+1$. Then it is easy to verify that
$(1-\chi)\,\eta^{-1}$ is a $\SG$-symbol of order $(-2n,-2\nu)$. Now set\footnote{The notation used in the definition of $u$ and
$v$ means that each component of these two vectors is a $\SG$-symbol of the indicated order.}
\begin{align*}
	u(x,\xi)&=i(1-\chi(x,\xi))\,\eta^{-1}(x,\xi)\,\langle\xi\rangle^2\,\nabla_{\xi}\varphi(x,\xi) \in \SG^{-n+1,-\nu},
	\\
	v(x,\xi)&=i(1-\chi(x,\xi))\,\eta^{-1}(x,\xi)\,\langle x\rangle^2\,\nabla_{x}\varphi(x,\xi) \in \SG^{-n,-\nu+1},
	\\
	w(x,\xi)&=(\nabla_\xi \cdot u+\nabla_x\cdot v+\chi)(x,\xi) \in\SG^{-n,-\nu}.
\end{align*}
By integration by parts (boundary terms vanish, as we consider adjoints w.r.t. $\SGii=\Sw$), it is easy to verify that the operator
${^t}P$ defined in \eqref{eq:vt} is indeed the adjoint of $P$ with respect to $\SGii$. It then satisfies
\begin{align*}
{^t}P e^{i\varphi}&=-\nabla_\xi\cdot \left(ue^{i\varphi}\right)-\nabla_x\cdot \left(ve^{i\varphi}\right)+we^{i\varphi}\\
&=\left(-\nabla_\xi\cdot u-iu\cdot\nabla_\xi\varphi-\nabla_x\cdot v-iv\cdot\nabla_x\varphi+\nabla_\xi\cdot u+\nabla_x\cdot v+\chi\right)e^{i\varphi}\\
&=\left((1-\chi)\cdot\eta^{-1}\cdot\eta+\chi\right)e^{i\varphi}=e^{i\varphi},
\end{align*}
where we have used the definition \eqref{eq:defeta} of the symbol $\eta$. 
The continuity of $P$ as an operator from $\SGmm$ to $\SG^{m-n,\mu-\nu}$ is immediate, by the properties of the $\SG$-symbol classes, see, e.g., \cite{rodino2}: partial differentiation w.r.t. $x$ is a continuous operation from $\mathbf{SG}^{m,\mu}$ to
$\mathbf{SG}^{m-1,\mu}$; similarly, partial differentiation w.r.t. $\xi$ is continuous from $\mathbf{SG}^{m,\mu}$ to
$\mathbf{SG}^{m,\mu-1}$ and multiplication by a symbol in $\mathbf{SG}^{m^\prime,\mu^\prime}$ is continuous from 
$\mathbf{SG}^{m,\mu}$ to $\mathbf{SG}^{m+m^\prime,\mu+\mu^\prime}$. In view of \eqref{eq:vdef} and of the definitions and
orders of $u_j$, $j=1,\dots,s$, $v_k$, $k=1,\dots,d$, and $w$, the stated continuity of $P\colon \SGmm\to\SG^{m-n,\mu-\nu}$ follows.
\end{proof}

The next lemma states that the operator $P$ in \eqref{eq:vdef} can be used to regularize a formal $\SG$-oscillatory integral \eqref{eq:formosci}, and it shows its continuous dependence on the $\SG$-seminorms of $a$.

\begin{lemma}
\label{lem:oscireg}
Let $\varphi$ be an admissible inhomogeneous $\SG$-phase function of order $(n,\nu)$ and $a\in\SGii(\RR^d,\RR^s)$. Then, the associated formal oscillatory integral $I_\varphi(a)$, defined in \eqref{eq:formosci}, is a function in $\Sw(\RR^d)$ that satisfies, for any  $(m,\mu)\in\RR^2$ and each $f\in\Sw(\RR^d)$, 
\[
	|\langle I_\varphi(a),\ f\rangle|\leq C \, \|a\|_q \, \rho_r(f),
\]
with a suitable constant $C>0$, the seminorm $\|\cdot\|_q$ on $\SGmm$ from Definition \ref{def:sgs}, and the seminorm 
$\rho_r(\cdot)$ in \eqref{eq:schwsn}, where the indices $q$ and $r$ solely depend on $(m,\mu)$.
\end{lemma}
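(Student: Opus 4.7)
The plan is to establish two assertions: first, that $I_\varphi(a)\in\Sw(\RR^d)$ for $a\in\SGii=\Sw(\RR^{d+s})$, and second, the continuity estimate. For the first, I would note that the formal integral \eqref{eq:formosci} converges absolutely, and differentiation under the integral sign is justified at every order. Upon writing $\partial^\alpha_x[e^{i\varphi}a]=e^{i\varphi}\sum_{\beta\leq\alpha}\binom{\alpha}{\beta}q_\beta(x,\xi)\,\partial^{\alpha-\beta}_x a$, where $q_\beta$ is a polynomial in the derivatives of $\varphi$ and thus a $\SG$-symbol of some finite order, the rapid decay of $a$ in $(x,\xi)$ dominates the polynomial growth of $q_\beta$. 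This forces $\langle x\rangle^M\,|\partial^\alpha_x I_\varphi(a)(x)|$ to be bounded for every $\alpha$ and $M$, giving $I_\varphi(a)\in\Sw(\RR^d)$.

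For the estimate, the central idea is to iterate the regulariser $P$ from Lemma \ref{lem:vdef}. Starting from the absolutely convergent double integral
\[
\langle I_\varphi(a),f\rangle=\iint e^{i\varphi(x,\xi)}\,a(x,\xi)\,f(x)\,dx\,d\xi
\]
and using ${}^tPe^{i\varphi}=e^{i\varphi}$, an $N$-fold integration by parts (all boundary terms vanish since $af\in\Sw(\RR^{d+s})$) yields
\[
\langle I_\varphi(a),f\rangle=\iint e^{i\varphi(x,\xi)}\,P^N\bigl(a(x,\xi)f(x)\bigr)\,dx\,d\xi
\]
for every $N\in\NN$.

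The remaining step is to bound $P^N(af)$ pointwise in terms of a $\SGmm$-seminorm of $a$ and a Schwartz seminorm of $f$, and then choose $N$ large enough for integrability. Expanding $P^N$ by Leibniz and using $u_j\in\SG^{-n+1,-\nu}$, $v_k\in\SG^{-n,-\nu+1}$, $w\in\SG^{-n,-\nu}$, each summand in $P^N(af)$ factors as a coefficient in $\SG^{-Nn+\cdot,-N\nu+\cdot}$ (built from products and derivatives of $u,v,w$) multiplied by $(\partial^{\gamma_1}_x\partial^\beta_\xi a)(\partial^{\gamma_2}_x f)$ with $|\beta|+|\gamma_1|+|\gamma_2|\leq N$. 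Combining this with the defining estimate \eqref{eq:sgineq} for $a\in\SGmm$, one gets a schematic bound
\[
|P^N(af)(x,\xi)|\leq C_N\,\|a\|_q\,\langle\xi\rangle^{\mu-N\nu}\sum_{|\gamma|\leq N}\langle x\rangle^{m-Nn+|\gamma|}\,|\partial^\gamma_x f(x)|,
\]
with $q=q(N)$ determined by the expansion. Since $\nu>0$, choosing $N$ with $N\nu>\mu+s$ makes the $\xi$-integral absolutely convergent, while the Schwartz seminorms of $f$ absorb the remaining $x$-growth; this delivers $|\langle I_\varphi(a),f\rangle|\leq C\|a\|_q\,\rho_r(f)$ with $q,r$ depending only on $(m,\mu)$.

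The main obstacle I anticipate is the bookkeeping in the Leibniz expansion. Because $P$ mixes $x$- and $\xi$-derivatives and $v\cdot\nabla_x$ can fall either on $a$ or on $f$, one must carefully track how many derivatives ultimately land on $a$ (thus fixing $q$) and how many on $f$ (fixing $r$), and verify that every coefficient obtained from products and derivatives of $u,v,w$ still fits into $\SG^{-Nn+\cdot,-N\nu+\cdot}$ with orders summing correctly. The mechanism is nonetheless robust, since each application of $P$ reduces the combined $\SG$-order by $(n,\nu)$ with both $n,\nu>0$, so only finitely many iterations are needed and the indices $q,r$ depend solely on $(m,\mu)$, as claimed.
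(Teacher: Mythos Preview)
Your proposal is correct and follows essentially the same route as the paper: first show $I_\varphi(a)\in\Sw$ directly from the Schwartz decay of $a$, then iterate the regulariser $P$ of Lemma~\ref{lem:vdef} to rewrite $\langle I_\varphi(a),f\rangle=\iint e^{i\varphi}P^N(af)$, and finally use the $\SG$-order reduction of $P$ to obtain integrability for large $N$. The only cosmetic difference is that the paper packages the Leibniz bookkeeping into the abstract continuity statement $P\colon\SGmm\to\SG^{m-n,\mu-\nu}$ (already established in Lemma~\ref{lem:vdef}) together with the continuity of $a\mapsto af$ from $\SGmm$ to $\SG^{-\infty,\mu}$, whereas you expand $P^N(af)$ by hand; the arguments are otherwise the same.
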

\begin{proof}
That $[I_\varphi(a)](x)$ converges for every $x\in\RR^d$ and gives a smooth and rapidly decreasing function in $x$ follows from
\[
	\left|\int_{\RR^s} e^{i\varphi(x,\xi)}\ a(x,\xi)\ \dd\xi\right|\leq \int_{\RR^s} |a(x,\xi)|\ \dd\xi,
\]
the rapid decay of $a$ w.r.t. $x$ and $\xi$, differentiation under the integral sign and dominated convergence.
Now, by using Lemma \ref{lem:vdef}, for a fixed $f\in\Sw$ and arbitrary $r\in\ZZ_+$,
\begin{align*}
	\left|\int_{\RR^s\times\RR^d} e^{i\varphi(x,\xi)}\ a(x,\xi)\right. &\left.f(x)\ \dd\xi \dd x\right|=
	\\
	&=\left|\int_{\RR^s\times\RR^d} \left({^t}P\right)^r e^{i\varphi(x,\xi)}\ a(x,\xi)f(x)\ \dd\xi \dd x\right|
	\\
	&=\left|\int_{\RR^s\times\RR^d} e^{i\varphi(x,\xi)}\ P^r\big(a(x,\xi)f(x)\big)\ \dd\xi \dd x\right|
	\\
	&\leq \int_{\RR^s\times\RR^d} \left|P^r\big(a(x,\xi)f(x)\big)\right|\ \dd\xi \dd x.
\end{align*}
Multiplication by $f\in\Sw(\RR^d_x)$ is a continuous map $\SGmm\rightarrow\SG^{-\infty,\mu}$. Since the inclusion map
$\SG^{m^\prime,\mu^\prime}\hookrightarrow\SGmm$, $m^\prime\le m$, $\mu^\prime\le\mu$, is continuous,
$a\mapsto P^r\big(a(x,\xi)f(x)\big)$ is a continuous map from $\SGmm$ to $\mathbf{SG}^{m-rn,\mu-r\nu}$ for any
$r\in\ZZ_+$, and, in particular,
\begin{align*}
	\sup_{\RR^d\times\RR^s} \left|P^r\big(a(x,\xi)f(x)\big)\right| \langle x\rangle^{rn-m}\langle\xi\rangle^{r\nu-\mu}&
	\leq E \, \|a\|_q \, \sum_{|\alpha|\leq r} \sup_{\RR^d} \left|\partial^\alpha f\right|
	\\
	&\leq E\, \|a\|_q\, \rho_r(f),
\end{align*}
where $E>0$ is a suitable constant, $\|\cdot\|_q$ is a seminorm on $\SGmm$, with $q\in\ZZ_+$ depending solely on $r,\ n,\ \nu$, and $\rho_r(f)$ is the Schwartz-seminorm \eqref{eq:schwsn}. Thus, for suitably large $r$ and a constant $C>0$, we have, as claimed,
\begin{equation}
	\label{eq:oscicont}
	\begin{aligned}
		|\langle I_\varphi(a),\ f\rangle|
		\leq E \, \|a\|_q \, \rho_r(f) \int_{\RR^s\times\RR^d} \langle x\rangle^{m-rn}
			\langle\xi\rangle^{\mu-r\nu} \ \dd \xi\dd x \\
		\leq C \, \|a\|_q\, \rho_r(f).
	\end{aligned}
\end{equation}
\end{proof}

\begin{proof}[Proof of Theorem \ref{thm:oscidef}]
Looking at the proof of Lemma \ref{lem:oscireg}, we may define, for $f\in\Sw$ and $r$ large enough (which, for each fixed admissible phase-function $\varphi$, solely depends on the order of $a$),
\[
	\langle I_\varphi(a),\ f\rangle = \int_{\RR^s\times\RR^d} e^{i\varphi(x,\xi)}\ P^r\big(a(x,\xi)f(x)\big)\ \dd \xi\dd x,
\]
in a continuous way, see \eqref{eq:oscicont}. That this is indeed a unique continuation of the map defined by \eqref{eq:formosci} for symbols of low enough order and well-defined, independently of $r$ (when chosen large enough), follows by approximation,
using Proposition \ref{pr:symbdense}.
\end{proof}

% Singularities
%
\section{Singularities of tempered oscillatory integrals}
\label{sec:wfoscint}
\subsection{Stationary phase points and global wave front set of \texorpdfstring{$\SG$}{SG}-oscillatory integrals}
We now define an extension of the notion of stationary points to admissible inhomogeneous phase functions
$\varphi$, which includes
asymptotes, and show below its relation with the global wave front set of the corresponding temperate oscillatory integrals
$I_\varphi(a)$, defined in Theorem \ref{thm:oscidef}.

\begin{definition}
With any admissible inhomogeneous $\SG$-phase function $\varphi$ of order $(n,\nu)$, we associate the set 
$M_\varphi\!\subset\!\partial\!\left(\BB^d\times\BB^s\right)$, whose complement is defined as
\begin{equation}
\label{eq:mphi}
(p,\omega)\in M_\varphi^c\Leftrightarrow |\nabla_\xi\varphi|^2\in\SG^{(2n,2\nu-2)}(\RR^d\times\RR^s)
\text{ is elliptic at }(p,\omega).
\end{equation}
Denoting by $\pi_M$ the projection of $M_\varphi\times\BB^d\subset \partial(\BB^d\times\BB^s)\times \BB^d$ onto $\BB^d\times\BB^d$, we also define the \emph{set of stationary phase} $\SP_\varphi\subset\partial(\BB^d\times\BB^d)$, given
by
\begin{equation}
	\label{eq:spphi}
	\begin{aligned}
		\SP_\varphi^c:=\{(y,q)\,|\, &\exists\, U\,\text{open neighbourhood of $(y,q)$ in }\BB^d\times\BB^d
	\\
		&\exists\, V\,\text{open neighbourhood of }\pi_M^{-1}(U)\text{ such that }
	\\
		& |\nabla_x\varphi(x,\xi)-p|\gtrsim \langle x\rangle^{n-1}\langle \xi\rangle^{\nu}+|p|
	\\
		& \text{for any }(x,\xi,p)\in V\cap(\RR^d\times\RR^s\times\RR^d)\}.
	\end{aligned}
\end{equation}
\end{definition}

\noindent
Both $M_\varphi$ and $\SP_\varphi$ are defined as complements of manifestly open sets, which yields:

\begin{lemma}
Let $\varphi$ be an admissible inhomogeneous $\SG$-phase function of order $(n,\nu)$. Then,
$M_\varphi$ is a closed subset of $\BB^d\times\SSS^{s-1}$ and $\SP_\varphi$ is a closed subset of $\BB^d\times\BB^d$.
\end{lemma}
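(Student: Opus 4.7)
The plan is to unpack the remark flagged by the authors immediately before the lemma, namely that both complements $M_\varphi^c$ and $\SP_\varphi^c$ are manifestly open. For $M_\varphi$, I would start from an arbitrary point $(p, \omega) \in M_\varphi^c$; by definition this means $|\nabla_\xi \varphi|^2$ is elliptic at $(p, \omega)$, so one obtains an open neighborhood $U \subset \BB^d \times \BB^s$ of $(p, \omega)$ on which $|\nabla_\xi \varphi(x, \xi)|^2 \gtrsim \langle x\rangle^{2n}\langle \xi\rangle^{2\nu-2}$ throughout $U \cap (\RR^d \times \RR^s)$. The same neighborhood $U$ witnesses the ellipticity of $|\nabla_\xi \varphi|^2$ at every other boundary point it contains; consequently $U \cap \partial(\BB^d \times \BB^s) \subset M_\varphi^c$, so $M_\varphi^c$ is open and $M_\varphi$ is closed.

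For $\SP_\varphi$ I would argue similarly, but the condition is doubled. Given $(y, q) \in \SP_\varphi^c$, the definition produces a pair of open neighborhoods $(U, V)$, with $U \ni (y, q)$ and $V \supset \pi_M^{-1}(U)$, on which the required estimate $|\nabla_x \varphi(x,\xi) - p| \gtrsim \langle x\rangle^{n-1}\langle \xi\rangle^\nu + |p|$ holds on $V \cap (\RR^d \times \RR^s \times \RR^d)$. Since the defining condition depends on $(y, q)$ only through $U$, the same pair $(U, V)$ certifies membership in $\SP_\varphi^c$ for every $(y', q') \in U$. Hence $\SP_\varphi^c$ is open in $\BB^d \times \BB^d$, and $\SP_\varphi$, being contained in $\partial(\BB^d \times \BB^d)$, is closed.

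The most delicate point is the claim that $M_\varphi$ actually sits inside the face $\BB^d \times \SSS^{s-1}$, i.e., no point of $\SSS^{d-1} \times \RR^s$ (finite covariable, asymptotic base variable) belongs to $M_\varphi$. My strategy here would be to use admissibility of $\varphi$ directly: in any sufficiently small neighborhood of $(p, \omega) \in \SSS^{d-1} \times \RR^s$, the covariable $\xi$ stays bounded, so the admissibility estimate $\eta(x, \xi) \gtrsim \langle x\rangle^{2n}\langle \xi\rangle^{2\nu}$ reduces to $\eta(x,\xi) \gtrsim \langle x\rangle^{2n}$, while the universal symbol bound for $\nabla_x \varphi \in \SG^{n-1, \nu}$ yields $\langle x\rangle^2 |\nabla_x\varphi(x,\xi)|^2 \lesssim \langle x\rangle^{2n}$. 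Rearranging then gives a lower bound on the complementary summand $\langle \xi\rangle^2|\nabla_\xi\varphi|^2$, which translates into ellipticity of $|\nabla_\xi\varphi|^2 \in \SG^{2n,2\nu-2}$ at $(p, \omega)$. The main obstacle here is that the implicit constants coming from admissibility and from the symbol norms need not cooperate automatically; I would expect to have to shrink the neighborhood around $\omega$ and exploit the continuity of $\nabla_x \varphi$ in the bounded $\xi$-regime in order to make the comparison of constants go through cleanly.
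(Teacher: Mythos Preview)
Your closedness arguments for both $M_\varphi$ and $\SP_\varphi$ are correct and are exactly what the paper has in mind: the authors give no proof at all beyond the sentence ``Both $M_\varphi$ and $\SP_\varphi$ are defined as complements of manifestly open sets,'' and your unpacking of that sentence is the natural one. In particular, your observation that the witnessing pair $(U,V)$ for a point $(y,q)\in\SP_\varphi^c$ works verbatim for every $(y',q')\in U$ is the right way to read ``manifestly open.''

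The ``delicate point'' you raise, however, is a trap: the containment $M_\varphi\subset\BB^d\times\SSS^{s-1}$ is almost certainly a misprint for $M_\varphi\subset\partial(\BB^d\times\BB^s)$. The paper itself contradicts the stated inclusion a few pages later: in Lemma~4.2, for the two-point function phase $\varphi(x,\xi)=-x_0\omega(\xi)+\mathbf{x}\cdot\xi$, the authors explicitly compute a \emph{nonempty} component
\[
(\SSS^3\times\RR^3)\cap M_\varphi
=\bigl\{\bigl((\pm\alpha,\mathbf{x})/\sqrt{\alpha^2+|\mathbf{x}|^2},\ \pm m\mathbf{x}/\sqrt{\alpha^2-|\mathbf{x}|^2}\bigr)\ \big|\ \mathbf{x}\in\RR^3,\ \alpha>|\mathbf{x}|\bigr\},
\]
which lies in $\SSS^{d-1}\times\RR^s$ and hence outside $\BB^d\times\SSS^{s-1}$. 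So the obstacle you identified in your argument---that the admissibility constant and the symbol-norm constant for $\nabla_x\varphi$ need not cooperate---is not a technical wrinkle to be smoothed over by shrinking neighbourhoods; it reflects a genuine failure of the claim. Your instinct that something was wrong there was right, but the resolution is to correct the ambient space in the lemma, not to push the estimate through.
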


\begin{remark}
The characterization of $\SP_\varphi$ here differs from the one in \cite{zahn}. The connection is established in Subsection \ref{sec:spphiint}, where a geometrical interpretation of $\SP_\varphi$ is given.
\end{remark}

\noindent
We can now state our main result:
\begin{theorem}
\label{thm:osciwf}
Let $\varphi$ be an admissible inhomogeneous $\SG$-phase function of order $(n,\nu)$
and let $a\in\SGjj(\RR^d\times\RR^s)$. For the temperate oscillatory integral $I_\varphi(a)$, defined in 
Theorem \ref{thm:oscidef}, we have
\begin{equation}
\WF(I_\varphi(a))\subset \SP_\varphi \, .
\end{equation}
\end{theorem}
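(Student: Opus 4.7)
The approach is to prove the contrapositive: if $(y_0,q_0)\in\SP_\varphi^c$, then $(y_0,q_0)\notin\WF(I_\varphi(a))$. By Definition \ref{def:css} (and its Fourier-symmetry consequence \eqref{eq:wffwf}), this amounts to showing that for a suitable cut-off $\chi_y$ around $y_0$ in $\BB^d$ (a $\Sm_c$-function if $y_0\in\RR^d$, an asymptotic cut-off $\psi_R$ in the sense of Subsection \ref{subsec:prel} if $y_0\in\SSS^{d-1}$), the function
\begin{equation*}
F(p):=\widehat{\chi_y\,I_\varphi(a)}(p)=\iint e^{i(\varphi(x,\xi)-p\cdot x)}\,\chi_y(x)\,a(x,\xi)\,d\xi\,dx,
\end{equation*}
interpreted as an oscillatory integral with phase $\Phi(x,\xi;p):=\varphi(x,\xi)-p\cdot x$, is smooth and rapidly decreasing in $p$, together with all its $p$-derivatives, on an appropriate set near $q_0$: a full neighbourhood if $q_0\in\RR^d$, a conic one at infinity if $q_0\in\SSS^{d-1}$.

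The heart of the argument is a two-regime non-stationary-phase estimate. From $(y_0,q_0)\in\SP_\varphi^c$ one extracts open sets $U\ni(y_0,q_0)$ in $\BB^d\times\BB^d$ and $V\supset\pi_M^{-1}(U)$ in $\BB^d\times\BB^s\times\BB^d$ such that, on $V\cap(\RR^d\times\RR^s\times\RR^d)$,
\begin{equation*}
|\nabla_x\varphi(x,\xi)-p|\gtrsim \langle x\rangle^{n-1}\langle\xi\rangle^\nu+|p|,
\end{equation*}
while away from $M_\varphi$ the symbol $|\nabla_\xi\varphi|^2\in\SG^{2n,2\nu-2}$ is $\SG$-elliptic. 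Choose a $\SG^{0,0}$-partition of unity $\chi_M+\chi_M^c\equiv 1$ on $\RR^d\times\RR^s$ with $\supp\chi_M$ contained in the $\BB^d\times\BB^s$-projection of $V$ (after shrinking the support of $\chi_y$ and the $p$-localization so that the combined support lies inside $V$) and $\supp\chi_M^c$ in the $\SG$-ellipticity region of $\nabla_\xi\varphi$. On $\chi_M^c$ build, as in Lemma \ref{lem:vdef} but with only $\nabla_\xi$, a first-order differential operator $P_\xi$ of $\SG$-order $(-n,-\nu)$ satisfying ${}^tP_\xi e^{i\varphi}=e^{i\varphi}$. On $\chi_M$ build $P_x$ based on $(\nabla_x\varphi-p)/|\nabla_x\varphi-p|^2$ so that ${}^tP_x e^{i\Phi(\cdot;p)}=e^{i\Phi(\cdot;p)}$; the $\SP_\varphi^c$ lower bound makes each application of ${}^tP_x$ lower the $(x,\xi)$-order of the integrand by $(n-1,\nu)$ and gain a further factor $(1+|p|)^{-1}$ (interpolated as needed via $(A+|p|)^{-1}\leq A^{-\alpha}|p|^{-(1-\alpha)}$ for $\alpha\in(0,1)$).

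Iterating ${}^tP_\xi$ on the $\chi_M^c$-piece and ${}^tP_x$ on the $\chi_M$-piece sufficiently many times makes both integrals absolutely convergent with bound $C_N(1+|p|)^{-N}$, for any $N\in\NN$. Differentiating $F$ in $p$ produces factors $x^\alpha$ under the integral, which are absorbed either by the compact support of $\chi_y$ (when $y_0\in\RR^d$) or by additional iterations of the IBP operators (when $y_0\in\SSS^{d-1}$), at the cost of a larger $N$. This yields the required rapid decay of $F$ and its $p$-derivatives near $q_0$, i.e.\ $q_0\notin\Css(\widehat{\chi_y I_\varphi(a)})$, so $(y_0,q_0)\notin\WF(I_\varphi(a))$ by Definition \ref{def:css}.

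The main obstacle is geometric: the $\SP_\varphi^c$ inequality is guaranteed only on a neighbourhood $V\subset\BB^d\times\BB^s\times\BB^d$ of $\pi_M^{-1}(U)$, while the $x$-integration by parts must run over full $\xi$-fibres of $\supp(\chi_M\chi_y)$. One has to engineer $\chi_y$, $\chi_M$, and the $p$-cut-off simultaneously so that the full $(x,\xi,p)$-support is trapped inside the slice of $V$ for the relevant $p$, which calls for a compactness argument in the radial compactification $\BB^d\times\BB^s$, exploiting the closedness of $M_\varphi$ and the stability of the $\SP_\varphi^c$ lower bound under small perturbations. A secondary subtlety is the uniform treatment of the three wave-front-set regimes $\RR^d\times\SSS^{d-1}$, $\SSS^{d-1}\times\RR^d$ and $\SSS^{d-1}\times\SSS^{d-1}$, each requiring a different cut-off $\chi_y$ and a different meaning of ``rapid decay near $q_0$''. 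Once these geometric issues are settled, the $\SG$-order bookkeeping in the iterated IBP is routine.
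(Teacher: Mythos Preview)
Your proposal is correct and follows essentially the same approach as the paper's proof: a two-regime integration-by-parts argument, using the $\nabla_\xi\varphi$-based operator away from $M_\varphi$ and the $(\nabla_x\varphi-p)$-based operator near $M_\varphi$ where the $\SP_\varphi^c$ lower bound holds, together with the interpolation $(A+|p|)^{-1}\le A^{-\alpha}|p|^{-(1-\alpha)}$ to extract simultaneous decay in $(x,\xi)$ and in $p$. The only organizational difference is that the paper first invokes Proposition~\ref{prop:css} and Corollary~\ref{cor:oscicss} to reduce to an amplitude supported near $M_\varphi$ (so that only the $P_x$-operator is needed in the main estimate, and the approximation by $a\in\SGii$ is made explicit), whereas you run both regimes in parallel via the partition $\chi_M+\chi_M^c$; the substance is the same.
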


The first step of the proof of Theorem \ref{thm:osciwf} consists in establishing the inclusions with respect to the 
projection onto the first component.

\begin{proposition}
\label{prop:css}
Let $\varphi$ be an admissible inhomogeneous $\SG$-phase function of order $(n,\nu)$ and let $a\in\SGjj(\RR^d\times\RR^s)$.
If $\Csp(a)\cap M_\varphi=\emptyset$, then $\Css(I_\varphi(a))=\emptyset$, that is, $I_\varphi(a)\in\Sw(\RR^d)$.
\end{proposition}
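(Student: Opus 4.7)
The plan is classical: a cut-off reduction followed by iterated integration by parts in $\xi$, exploiting the ellipticity of $|\nabla_\xi\varphi|^2$ that, by the hypothesis $\Csp(a)\cap M_\varphi=\emptyset$, is available on a neighbourhood of $\Csp(a)$. Since $\Csp(a)$ and $M_\varphi$ are disjoint closed subsets of the compact Hausdorff space $\BB^d\times\BB^s$, I first pick cut-offs $\chi,\tilde\chi\in\SG^{0,0}(\RR^d,\RR^s)$ with $\chi\equiv1$ on a neighbourhood of $\Csp(a)$, $\tilde\chi\equiv1$ on a neighbourhood of $\supp\chi$, and both identically zero on a fixed neighbourhood of $M_\varphi$. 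Such cut-offs are built from asymptotic cut-offs of the form $\psi_R$ (cf.\ Section \ref{sec:prel}) combined with an ordinary partition of unity on the compactified space. Since $\supp a\subseteq\Csp(a)\subseteq\{\chi\equiv1\}\subseteq\{\tilde\chi\equiv1\}$, one has $a=\chi a=\tilde\chi a$.

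Next I construct the $\xi$-integration-by-parts operator. On $\supp\tilde\chi$ the symbol $|\nabla_\xi\varphi|^2\in\SG^{2n,2\nu-2}$ is elliptic by the very definition of $M_\varphi^c$, so setting
$$
  b_j(x,\xi)\,:=\,-i\,\tilde\chi(x,\xi)\,\frac{\partial_{\xi_j}\varphi(x,\xi)}{|\nabla_\xi\varphi(x,\xi)|^2},\qquad j=1,\dots,s,
$$
and extending by zero outside $\supp\tilde\chi$ produces symbols $b_j\in\SG^{-n,-\nu+1}(\RR^d,\RR^s)$. The first-order operator $L:=\sum_j b_j\,\partial_{\xi_j}$ satisfies $L\,e^{i\varphi}=\tilde\chi\,e^{i\varphi}$, and its transpose ${}^{t}L=-\sum_j\partial_{\xi_j}(b_j\,\cdot)$ is continuous from $\SGmm$ to $\SG^{m-n,\mu-\nu}$ by the composition rules of the $\SG$-calculus. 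Because $\tilde\chi a=a$, integrating by parts (first for $a\in\SGii$, then extended by density via Proposition \ref{pr:symbdense}) yields
$$
  I_\varphi(a)(x)\,=\,\int_{\RR^s} e^{i\varphi(x,\xi)}\,({}^{t}L)^k a(x,\xi)\,d\xi,\qquad k\in\NN,
$$
with $({}^{t}L)^k a\in\SG^{m-kn,\mu-k\nu}$; both orders tend to $-\infty$ as $k\to\infty$ since $n,\nu>0$.

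To finish, for arbitrary multi-indices $\alpha,\beta\in\ZZ_+^d$ I estimate $\sup_x|x^\alpha\partial_x^\beta I_\varphi(a)(x)|$ by differentiating under the integral sign, which is legitimate by dominated convergence once $k$ is taken large enough. Using $\partial_x^\gamma\varphi\in\SG^{n-|\gamma|,\nu}$, a Fa\`a di Bruno expansion gives $\partial_x^{\beta'}e^{i\varphi}=e^{i\varphi}\,p_{\beta'}$ with $p_{\beta'}\in\SG^{(n-1)|\beta'|,\nu|\beta'|}$ in the worst case; hence after Leibniz and multiplication by $x^\alpha$ the integrand equals $e^{i\varphi}$ times a $\SG$-symbol of order bounded by
$$\bigl(|\alpha|+(n-1)|\beta|+m-kn,\ \nu|\beta|+\mu-k\nu\bigr).$$
Choosing $k$ large enough relative to $|\alpha|+|\beta|$ (possible because $n,\nu>0$) makes the $\xi$-integral absolutely convergent and produces a bound of the form $\langle x\rangle^{-N}$ for any prescribed $N$. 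Every Schwartz seminorm of $I_\varphi(a)$ is therefore finite, so $I_\varphi(a)\in\Sw(\RR^d)$, as claimed. The principal obstacle is the simultaneous bookkeeping of the $x$- and $\xi$-orders after iterations of ${}^{t}L$ and $x$-differentiations; the cut-off construction in the first step, although standard in the $\SG$-calculus, also requires care to realise the separation at the symbol level with genuine $\SG^{0,0}$ regularity.
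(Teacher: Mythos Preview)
Your overall scheme---cut off, then integrate by parts in $\xi$ using $L=\sum_j b_j\partial_{\xi_j}$---is the right one, and it is essentially the paper's approach. However, there is a genuine gap in the construction of $b_j$. You assert that ``on $\supp\tilde\chi$ the symbol $|\nabla_\xi\varphi|^2$ is elliptic by the very definition of $M_\varphi^c$'', and from this conclude $b_j\in\SG^{-n,-\nu+1}$. But $M_\varphi$ lives only in the boundary $\partial(\BB^d\times\BB^s)$; ellipticity at boundary points merely furnishes the lower bound $|\nabla_\xi\varphi|^2\gtrsim\langle x\rangle^{2n}\langle\xi\rangle^{2\nu-2}$ on a \emph{neighbourhood of the boundary}, i.e.\ outside some compact set. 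Nothing in the hypothesis $\Csp(a)\cap M_\varphi=\emptyset$, nor in the admissibility of $\varphi$, prevents $\nabla_\xi\varphi$ from vanishing at interior points $(x_0,\xi_0)\in\RR^d\times\RR^s$ lying in $\supp a$ (the admissibility estimate \eqref{eq:phaseineq} only controls the sum $\langle x\rangle^2|\nabla_x\varphi|^2+\langle\xi\rangle^2|\nabla_\xi\varphi|^2$, and only for $|x|+|\xi|\ge R$). Since you require $\tilde\chi\equiv1$ on all of $\Csp(a)\supseteq\supp a$, such interior zeros sit inside $\supp\tilde\chi$, and your $b_j$ is then singular there---it is not a symbol, and the iteration of ${}^tL$ does not make sense.

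The remedy is exactly what the paper does: choose the cut-off $\chi$ to be identically $1$ only on a neighbourhood of $\Csp(a)\cap\partial(\BB^d\times\BB^s)$ and supported in the region where the ellipticity bound actually holds. Then $(1-\chi)a$ is compactly supported in $(x,\xi)$, so $I_\varphi((1-\chi)a)\in C_c^\infty(\RR^d)\subset\Sw(\RR^d)$ trivially, and the integration-by-parts argument is applied only to $\chi a$, whose support avoids all finite stationary points of $\varphi$ in $\xi$. Once this decomposition is in place, the remainder of your argument (iterating ${}^tL$, then controlling all Schwartz seminorms by differentiation under the integral) goes through; the paper reaches the same conclusion slightly more quickly by observing that $I_\varphi(\chi a)=I_\varphi(Q^r(\chi a))$ with $Q^r(\chi a)$ of arbitrarily low $\SG$-order, and invoking the proof of Lemma~\ref{lem:oscireg}.
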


\begin{proof}
We prove the statement by a regularization argument, cfr., e.g., \cite[Proposition 3.3]{zahn}.
Choose first a neighbourhood $W\subset\BB^{d}\times\BB^{s}$ of $\Csp(a)$ whose closure does not intersect $M_\varphi\,$. 
Then, by the definition of $M_\varphi\,$, $|\nabla_\xi\varphi(x,\xi)|^2$ is elliptic at each $(p,\omega)\in W\cap\partial\left(\BB^{d}\times\BB^s\right)$. By compactness and \eqref{eq:ellip}, we can then choose a finite cover of $\Csp(a)\cap\partial\left(\BB^{d}\times\BB^s\right)$ and obtain a single open neighbourhood $U\subset\BB^d\times\BB^s$ of $\Csp(a)\cap\partial\left(\BB^{d}\times\BB^s\right)$ such that, on $U\cap(\RR^d\times\RR^s)$,
\begin{equation}
	\label{eq:locell}
	|\nabla_\xi\varphi(x,\xi)|^2\gtrsim \langle x\rangle^{2n}\langle \xi\rangle^{2\nu-2}.
\end{equation}
We then fix a cut-off function $\chi$, asymptotically 0-homogeneous as in Subsection \ref{subsec:prel}, identically equal to $1$ in a neighbourhood $U^\prime\subset U$ of $\Csp(a)\cap\partial\left(\BB^{d}\times\BB^s\right)$, and supported in $U$. By construction, $(1-\chi)a$ is compactly supported, which implies $I_\varphi((1-\chi) a)\in\mathcal{S}$. To analyze $I_\varphi(\chi a)$, we define
\begin{align*}
	b_j(x,\xi)=i |\nabla_\xi\varphi(x,\xi)|^{-2}\,&\partial_{\xi_j}\varphi(x,\xi),\quad c(x,\xi)=\nabla_\xi\cdot b(x,\xi),
	\\
	Q&=b\cdot\nabla_\xi+c.
\end{align*}
Observe that $b_j$ is well-defined on $\Csp(\chi)$, since $ |\nabla_\xi\varphi|$ is strictly positive on $U$. Indeed, 
by \eqref{eq:locell} we actually have $\chi b_j\in\mathbf{SG}^{-n,-\nu+1}$, $\chi c\in\mathbf{SG}^{-n,-\nu}$, and
\[
	\chi[\,{{^t}Q}e^{i\varphi}]=\chi(-\nabla_\xi\cdot b-ib\cdot\nabla_\xi\varphi+\nabla_\xi\cdot b)e^{i\varphi}=\chi e^{i\varphi}.
\]
The same of course holds with any $\SG$-symbol of order $(0,0)$ supported in $U$ in place of $\chi$.
We can then conclude by an approximation argument as in Theorem \ref{thm:oscidef}: using the fact that $Q$ involves only differentiations with respect to $\xi$, we can insert it into the expression of $I_\varphi(\psi a)$, and find 
$$
	I_\varphi(\chi a)=I_\varphi(Q^r (\chi a)) \text{ for arbitrary $r\in\ZZ_+$}.
$$
As $Q$ is a continuous map from $\SGmm$ to $\mathbf{SG}^{m-n,\mu-\nu}$, we can achieve arbitrarily low order of
$Q^r (\chi a)$, by choosing $r$ large enough. Thus $I_\varphi(\chi a)\in\Sw$, and therefore $I_\varphi(a)=I_\varphi(\chi a)+I_\varphi((1-\chi) a)\in\Sw$ as claimed (cfr. the proof of Lemma \ref{lem:oscireg}).
\end{proof}

\begin{corollary}
\label{cor:oscicss}
Let $\varphi$ be an admissible inhomogeneous $\SG$-phase function of order $(n,\nu)$ and let $a\in\SGjj(\RR^d\times\RR^s)$.
Then,
\[
	\Css(I_\varphi(a))\subset \pi_1(M_\varphi).
\]
\end{corollary}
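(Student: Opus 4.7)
The plan is to localize in the $x$-variable and reduce the statement to Proposition~\ref{prop:css}. To show that a point $p\in\BB^d\setminus\pi_1(M_\varphi)$ lies in the complement of $\Css(I_\varphi(a))$, I would exhibit an $x$-cut-off $\chi$, equal to $1$ near $p$ in the appropriate sense, and show that $\chi\cdot I_\varphi(a)\in\Sw(\RR^d)$.

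First I would observe that $\pi_1(M_\varphi)$ is closed in $\BB^d$: by the Lemma just proved, $M_\varphi$ is a closed subset of $\BB^d\times\SSS^{s-1}$, and projection along a compact factor preserves closedness. Hence, for any $p\in\BB^d\setminus\pi_1(M_\varphi)$, there exists an open neighbourhood $W\subset\BB^d$ of $p$ with $(W\times\BB^s)\cap M_\varphi=\emptyset$.

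Next I would choose a cut-off $\chi$ depending only on $x$, with $\chi\in\SG^{0,0}$ and $\Csp(\chi)\subset W$: a compactly supported $\Sm$ bump with $\chi\equiv 1$ near $p$ if $p\in\RR^d$, or an asymptotic cut-off $\psi_R$ as in Subsection~\ref{subsec:prel} with $\psi(p)\neq 0$ and supported in a basic open neighbourhood of $p$ contained in $W$ if $p\in\SSS^{d-1}$. In either case $\chi\cdot a\in\SGjj(\RR^d,\RR^s)$ and its cone support is contained in $\Csp(\chi)\times\BB^s\subset W\times\BB^s$, which is disjoint from $M_\varphi$. Proposition~\ref{prop:css} then yields $I_\varphi(\chi\cdot a)\in\Sw(\RR^d)$.

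The final step is to identify $I_\varphi(\chi\cdot a)=\chi\cdot I_\varphi(a)$ as tempered distributions. For $a\in\SGii=\Sw$ this is immediate from the absolutely convergent integral formula in Theorem~\ref{thm:oscidef}(2) and Fubini; the general case follows by approximating $a\in\SGmm$ with a sequence from $\SGii$ via Proposition~\ref{pr:symbdense}, using the continuity of $I_\varphi$ on $\SGmm$ granted by Theorem~\ref{thm:oscidef}(3) together with the continuity of multiplication by $\chi\in\SG^{0,0}$ both on symbol classes and on $\Swd$. Once this identity is in hand, $\chi\cdot I_\varphi(a)\in\Sw$ translates directly, via Definition~\ref{def:css}, into $p\notin\Css(I_\varphi(a))$: smoothness of $I_\varphi(a)$ near $p$ when $p\in\RR^d$, and the defining asymptotic decay condition when $p\in\SSS^{d-1}$. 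The main (mild) obstacle is the last identification; everything else is either definitional unpacking or a direct appeal to Proposition~\ref{prop:css}.
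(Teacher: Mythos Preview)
Your argument is correct and is essentially the same as the paper's: localize with an $x$-cut-off whose cone support avoids $\pi_1(M_\varphi)$, pull the cut-off inside the oscillatory integral, and apply Proposition~\ref{prop:css}. You supply more detail than the paper does (the closedness of $\pi_1(M_\varphi)$ and the approximation argument for $\chi\cdot I_\varphi(a)=I_\varphi(\chi a)$), but the strategy is identical.
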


\begin{proof}
Let $p\notin\pi_1(M_\varphi)$. Choose a cut-off $\psi$ around $p$ if $p\in\RR^d$, or a suitable asymptotic cut-off 
if $p\in\SSS^{d-1}$, whose (cone) support does not intersect $\pi_1(M_\varphi)$. Then $\psi I_\varphi(a)=I_\varphi(\psi a)$, and the latter belongs to $\Sw$, by Proposition \ref{prop:css}.
\end{proof}

\begin{proof}[Proof of Theorem \ref{thm:osciwf}]
Let $(y,q)\in(\BB^d\times\BB^d)\setminus\SP_\varphi$. 
By Corollary \ref{cor:oscicss}, it suffices to consider only the points $y\in\pi_1(M_\varphi)$.
We have to prove that there exists a pair of cut-off functions,  $\psi_{y},\ \psi_{q}$, either localizing around $y$, $q$ or defined as in Subsection \ref{subsec:prel}, nonvanishing on neighbourhoods of $y$ and $q$, respectively, such that, 
for $p$ in the support of $\psi_{q}$, 
\begin{equation}
	\label{eq:est}
	\left|\cF[\psi_{y} I_\varphi(a)](p)\right|\lesssim (1+|p|)^{-N}
\end{equation}
for arbitrarily high $N$, and that the lefthand side of \eqref{eq:est} is smooth.
In fact, we will show this for $a\in\SGii$ in the form
\[
	\left|\cF[\psi_{y} I_\varphi(a)](p)\right|\lesssim \|a\|_s(1+|p|)^{-N},
\]
for any $p\in \supp(\psi_{q})$ and arbitrary $N$,
where $\|.\|_s$ is a seminorm on $\SGmm$. Then, an approximation argument, cf. Proposition \ref{pr:symbdense} and
the proof of Theorem \ref{thm:oscidef}, yields the result.\\

Let us then assume $a\in\SGii$, which can also be chosen supported arbitrarily close to $M_\varphi$, 
due to Proposition \ref{prop:css}.
Since $(y,q)$ is non-stationary, we can find an open neighbourhood $V$ of $\pi_M^{-1}(y,q)$, and thus two cut-offs
$\psi_{y}(x)$, $\psi_{q}(p)$, identically equal to $1$ around/along the direction(s) $y$, $q$, respectively, and a cut-off
$\psi(\xi)$ such that, on the intersection of the cone support of $\psi_{y}(x)\,\psi(\xi)\,\psi_{q}(p)$ with
$\RR^d\times\RR^s\times\RR^d$, the function $\eta_p(x,\xi):=|\nabla_x\varphi(x,\xi)-p|^2$ fullfills 
\begin{equation}
	\label{eq:etapineq}
	\eta_p(x,\xi)\gtrsim (\langle x\rangle^{n-1}\langle \xi\rangle^{\nu}+|p|)^2.
\end{equation}
Now observe that the cone support of $[1-\psi(\xi)]\psi_{y}(x)$ does not intersect $M_\varphi$. Thus, by Proposition \ref{prop:css}, we can restrict our analysis to a symbol of the form $\psi(\xi) a(x,\xi)$. 
In the remainder of the proof we thus assume
$a$ to be supported in such a way that \eqref{eq:etapineq} holds on the support of $\psi_{q}(p)\,\psi_{y}(x)\,a(x,\xi)$.
Now define
\begin{align*}
	b_j(x,\xi,p)&=i[\eta_p(x,\xi)]^{-1}\,\psi_{y}(x)\,(\partial_{x_j}\varphi(x,\xi)-p_j),
	\\
	c(x,\xi,p)&=\nabla_x\cdot b(x,\xi,p),
	\\
	Q&=b\cdot\nabla_x+c.
\end{align*}
The operator $Q$ is well-defined on the (cone) support of $\psi_{q}(p) a(x,\xi)$, as $\eta_p(x,\xi)$ does not vanish or approach zero asymptotically there. We construct the adjoint of $Q$ as above, with respect to $\SGii=\Sw$, and conclude
\begin{align*}
	\psi_{q}(p)a(x,\xi) {^t Q} e^{i\varphi(x,\xi)-ix\cdot p} &=
	\psi_{q}(p)a(x,\xi)\cdot
	\\
	&\hspace{-3mm}\cdot [-\nabla_x\cdot b(x,\xi,p)-ib(x,\xi,p)\cdot(\nabla_x\varphi(x,\xi)-p)
	\\
	&\hspace{-3mm}\phantom{\cdot [}\,+\nabla_x\cdot b(x,\xi,p)]\,e^{i\varphi(x,\xi)-ix\cdot p}
	\\
	&=\psi_{q}(p)\,\psi_{y}(x)\, a(x,\xi)\, e^{i\varphi(x,\xi)-ix\cdot p}.
\end{align*}
If we pick another (asymptotic) cut-off $\widetilde{\psi}_{y}$ supported in a smaller neighborhood of $y$
and such that $\widetilde{\psi}_{y}\psi_{y}=\widetilde{\psi}_{y}$, we see that, by the properties of $Q$, 
for arbitrary $r\in\ZZ_+$,
\begin{equation}
	\label{eq:finaloscieq}
	\begin{aligned}
		\big|\psi_{q}(p)&\mathcal{F}\left[\widetilde{\psi}_{y} I_\varphi(a)\right](p)\big|=
		\\
		&=\left|\psi_{q}(p)\int_{\RR^d\times\RR^s} e^{i\varphi(x,\xi)-i x\cdot p}\,\widetilde{\psi}_{y}(x)\,a(x,\xi)\dd x\dd \xi\right|
		\\
		&\leq\left|\int_{\RR^d\times\RR^s} e^{i\varphi(x,\xi)-i x\cdot p}\,Q^r\big(\psi_{q}(p)\widetilde{\psi}_{y}(x)\,a(x,\xi)\big)
		\dd x\dd \xi\right|
		\\
		&\leq\int_{\RR^d\times\RR^s} \left|Q^r\big(\psi_{q}(p)\,\widetilde{\psi}_{y}(x)\,a(x,\xi)\big)\right|\dd x\dd \xi.
	\end{aligned}
\end{equation}
By \eqref{eq:etapineq}, using the fact that differentiation decreases the respective symbol order by 1, for any $(m,\mu)$ there is a seminorm $\|\cdot\|_{s}$ on $\SGmm$ such that
\[
	\left|Q^r\big(\psi_{q}(p)\widetilde{\psi}_{y}(x)\,a(x,p)\big)\right|
	\leq \|a\|_{s}\langle x\rangle^{m-rn/2}\langle \xi\rangle^{\mu-r\nu/2}(1+|p|)^{-r/2},
\]
that is, for large enough $r$ the expression in \eqref{eq:finaloscieq} is integrable and decays in $p$ faster than any inverse power. By differentiating under the integral sign, we can show similar estimates for any derivative with respect to $p$. This proves the theorem.
\end{proof}

\subsection{A geometrical interpretation of \texorpdfstring{$\SP_\varphi$}{SPr}}
\label{sec:spphiint}
We will now give a characterization, under additional assumptions on the phase function, of the set $\SP_\varphi$, cfr. \cite[Lemma 3.6]{zahn}. Assume throughout this subsection that, if $(p,\omega)\in M_\varphi$, meaning that $\langle \xi\rangle^2|\nabla_\xi\varphi(x,\xi)|^2$ is not elliptic at $(p,\omega)$, thus `` not fulfilling the estimate \eqref{eq:phaseineq} by itself'', then $\langle x\rangle^2|\nabla_x\varphi(x,\xi)|^2$ is elliptic at $(p,\omega)$.

\begin{lemma}
\label{lem:sp1}
Let $\varphi$ be an admissible inhomogeneous phase function of order $(n,\nu)$. 
Let $(y,\omega)\in\RR^d\times\SSS^{d-1}$ and suppose that for any $\theta\in\SSS^{s-1}$ satisfying $(y,\theta)\in M_\varphi$ there exists a constant (angle) $\alpha>0$ and a constant $E>0$ such that
$\tau>E\Rightarrow\angle\big(\nabla_x \varphi(y,\tau\theta),\omega\big)\geq \alpha$. Then, $(y,\omega)\notin\SP_\varphi$.
\end{lemma}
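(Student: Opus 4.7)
The plan is to show $(y,\omega)\notin\SP_\varphi$ by producing the open neighborhoods required by \eqref{eq:spphi}. The driving geometric fact is an elementary lemma: if nonzero $v,w\in\RR^d$ enclose an angle of at least $\beta>0$, then $|v-w|\geq c(\beta)(|v|+|w|)$ for some $c(\beta)>0$ (expand $|v-w|^2=|v|^2+|w|^2-2\cos(\angle(v,w))|v||w|$ and apply AM--GM). Combined with the standing assumption of this subsection---which forces $|\nabla_x\varphi(x,\xi)|\gtrsim\langle x\rangle^{n-1}\langle\xi\rangle^\nu$ on a neighborhood of every $(y,\theta)\in M_\varphi$---this reduces the bound in \eqref{eq:spphi} to controlling the \emph{direction} of $\nabla_x\varphi$ relative to $\omega$.

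First I would upgrade the pointwise angular hypothesis to a neighborhood statement. The set $A_y:=\{\theta\in\SSS^{s-1}:(y,\theta)\in M_\varphi\}$ is compact, since $M_\varphi$ is closed. Fix $\theta_0\in A_y$ with its constants $\alpha_{\theta_0},E_{\theta_0}>0$ from the hypothesis. Using the symbol estimates $\partial_x\nabla_x\varphi\in\SG^{n-2,\nu}$ and $\partial_\xi\nabla_x\varphi\in\SG^{n-1,\nu-1}$, the mean value theorem gives, for $\xi=\tau\dot\xi$ with $\tau$ large,
\[
	|\nabla_x\varphi(x,\tau\dot\xi)-\nabla_x\varphi(y,\tau\theta_0)|
	\lesssim |x-y|\,\langle y\rangle^{n-2}\,\tau^\nu
	+ |\dot\xi-\theta_0|\,\langle y\rangle^{n-1}\,\tau^\nu,
\]
while ellipticity yields $|\nabla_x\varphi(y,\tau\theta_0)|\gtrsim\langle y\rangle^{n-1}\tau^\nu$. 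Thus, by shrinking a neighborhood of $(y,\theta_0)$ in $\BB^d\times\BB^s$, the perturbation becomes a fixed small fraction of the principal magnitude \emph{uniformly in} $\tau$, so $\nabla_x\varphi(x,\tau\dot\xi)$ stays within angle $\alpha_{\theta_0}/2$ of $\nabla_x\varphi(y,\tau\theta_0)$, and hence at angle at least $\alpha_{\theta_0}/2$ from $\omega$. Finitely many such neighborhoods cover the compact set $\{y\}\times A_y$, yielding uniform constants $\alpha,E>0$ and an open neighborhood $\mathcal{U}\subset\BB^d\times\BB^s$ of $\{y\}\times A_y$ on which $\angle(\nabla_x\varphi(x,\xi),\omega)\geq\alpha$ and $|\nabla_x\varphi(x,\xi)|\gtrsim\langle x\rangle^{n-1}\langle\xi\rangle^\nu$, for $(x,\xi)\in\RR^d\times\RR^s$ with $|\xi|>E$.

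To close the argument, I would pick $U_y$ a neighborhood of $y$ in $\BB^d$ so small that $(\overline{U_y}\times\SSS^{s-1})\cap M_\varphi$ lies in the $\xi$-slice of $\mathcal{U}$ (possible since $M_\varphi$ is closed and $A_y$ is its slice over $y$), and $U_\omega$ a neighborhood of $\omega$ in $\BB^d$ with $\angle(\dot p,\omega)<\alpha/4$ and $|p|>E$ for every $p\in U_\omega\cap\RR^d$. Set $U:=U_y\times U_\omega$ and take $V$ to be an open neighborhood of $\pi_M^{-1}(U)$ contained in $\mathcal{U}\times U_\omega$. For $(x,\xi,p)\in V\cap(\RR^d\times\RR^s\times\RR^d)$, the triangle inequality for angles gives $\angle(\nabla_x\varphi(x,\xi),p)\geq\alpha-\alpha/4=3\alpha/4$, so the geometric lemma combined with ellipticity delivers
\[
	|\nabla_x\varphi(x,\xi)-p|\gtrsim |\nabla_x\varphi(x,\xi)|+|p|\gtrsim\langle x\rangle^{n-1}\langle\xi\rangle^\nu+|p|,
\]
which is precisely \eqref{eq:spphi}.

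The delicate step I anticipate is the uniform extension from the pointwise angular hypothesis at $(y,\tau\theta_0)$ to a full boundary neighborhood in $\BB^d\times\BB^s$, with control that survives the limit $\tau\to\infty$. The $\SG$-calculus is tailored for this, but the weight matching of $\langle y\rangle^{n-2}$ from the $x$-derivative against the ellipticity threshold $\langle y\rangle^{n-1}$ must be tracked carefully to guarantee that the perturbation stays subdominant right up to the boundary; this weight bookkeeping, rather than the final geometric assembly, is where the real content of the proof sits.
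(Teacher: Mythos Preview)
Your proposal is correct and follows essentially the same approach as the paper: separate $\nabla_x\varphi$ and $p$ into disjoint cones, invoke the scaling inequality $|v-w|\gtrsim|v|+|w|$ for vectors in disjoint closed cones, and then apply the standing ellipticity assumption on $\langle x\rangle^2|\nabla_x\varphi|^2$ near $M_\varphi$ to get the required lower bound. The paper's proof is terser---it writes ``by continuity'' where you carry out the explicit MVT-plus-compactness argument to pass from the pointwise angular hypothesis at $(y,\tau\theta_0)$ to a uniform bound on a neighborhood of $\{y\}\times A_y$---but the skeleton is identical.
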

\begin{proof}
In view of the continuity of $\varphi$, the above condition holds in a sufficiently small neighbourhood $U$ of $y$. If there exist
$\alpha$ and $E$ as in the assumptions, we can find two open cones $V,W\subset\RR^d$ with $\overline{V}\cap\overline{W}=\emptyset$ such that $\omega\in V$ and $\tau>E\Rightarrow\nabla_x \varphi(y,\tau\theta)\in W$. Then, a standard scaling inequality yields, for all 
$(x,\theta,p)\in U\times\SSS^{s-1}\times V$ such that $(x,\theta)\in M_\varphi$ and $\tau>E$,
\[
	|\nabla_x \varphi(x,\tau\theta)-p|\gtrsim|\nabla_x \varphi(x,\tau\theta)|+|p|.
\]
By the ellipticity assumption on $\langle x\rangle^2|\nabla_x\varphi(x,\xi)|^2$ above, using the fact that $(x,\theta)\in M_\varphi$, we see (by possibly enlarging $E$) that 
$|\nabla_x \varphi(x,\tau\theta)|\gtrsim \langle x\rangle^{n-1}\langle \tau\rangle^{\nu}$ which proves the claim.
\end{proof}

\noindent
We argue just like above for points in $(\SSS^{d-1}\times\SSS^{d-1})\cap\SP_\varphi$:
\begin{lemma}
Let $\varphi$ be an admissible inhomogeneous phase function of order $(n,\nu)$. Assume that for $(\theta,\omega)\in\SSS^{d-1}\times\SSS^{d-1}$ there exists an open neighbourhood $U\subset\BB^{d}$ of $\theta$ with the property that,
for all $(x,\eta)\in (\RR^d\cap U)\times\SSS^{s-1}$ satisfying $(x\infty,\eta)\in M_\varphi$,
there exists a constant (angle) $\alpha>0$ and a constant $E>0$ such that
$\tau>E\Rightarrow \angle\big(\nabla_x \varphi(x,\tau\eta),\omega\big)\geq \alpha$. Then, $(\theta,\omega)\notin\SP_\varphi$.
\end{lemma}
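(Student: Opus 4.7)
The plan is to mimic the proof of Lemma \ref{lem:sp1}, but now with all the constructions carried out in the radial compactification $\BB^d$ rather than $\RR^d$, since the base point $\theta$ is itself an asymptote. The delicate new ingredient is uniformity of the angle condition across the fibre $\{\eta \in \SSS^{s-1} : (\theta',\eta) \in M_\varphi\}$ as $\theta'$ ranges over an open neighbourhood of $\theta$ in $\SSS^{d-1}$.

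First, I would shrink $U$ to a smaller open neighbourhood $U_0 \subset \BB^d$ of $\theta$ whose intersection with $\SSS^{d-1}$ has compact closure in $U$. The set $K := \{(\theta',\eta) \in \overline{U_0 \cap \SSS^{d-1}} \times \SSS^{s-1} : (\theta',\eta) \in M_\varphi\}$ is closed (since $M_\varphi$ is closed) and lies in a compact set, hence is compact. By continuity of $\nabla_x \varphi$ on $\BB^d \times \BB^s$ (in the sense of its extension via $\SG$-behaviour) and the hypothesis, to each $(\theta',\eta) \in K$ one can associate $\alpha(\theta',\eta), E(\theta',\eta) > 0$ such that the angle condition holds on a full open neighbourhood of $(\theta',\eta,\tau)$ for $\tau > E$. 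A finite subcover, combined with the boundedness of $\SSS^{s-1}$, yields uniform constants $\alpha_0, E_0 > 0$ valid for all $(x,\eta)$ with $x\infty$ in a neighbourhood $U_1$ of $\theta$ (within $\SSS^{d-1}$), $(x\infty,\eta) \in M_\varphi$, and all $\tau > E_0$.

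Next, pick open cones $V_\omega, W \subset \RR^d \setminus \{0\}$ with $\omega \in V_\omega$ and $\overline{V_\omega} \cap \overline{W} = \emptyset$, such that $\angle(v,\omega) < \alpha_0/2$ for $v \in V_\omega$ and $\nabla_x\varphi(x,\tau\eta) \in W$ for the $(x,\eta,\tau)$ considered above (shrinking $U_1$ and enlarging $E_0$ if needed). The standard scaling inequality then yields
\[
    |\nabla_x \varphi(x,\tau\eta) - p| \gtrsim |\nabla_x \varphi(x,\tau\eta)| + |p|
\]
for $p$ in the cone $V_\omega$. Invoking the standing assumption of the subsection, at every point of $M_\varphi$ the symbol $\langle x\rangle^2 |\nabla_x \varphi|^2$ is elliptic, hence
\[
    |\nabla_x \varphi(x,\tau\eta)| \gtrsim \langle x\rangle^{n-1} \langle \tau\rangle^{\nu}
\]
for $x$ with $x\infty \in U_1$, $(x\infty,\eta) \in M_\varphi$ and $\tau$ large. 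Combining the two inequalities produces the bound \eqref{eq:spphi} required for non-stationarity, on $V \cap (\RR^d \times \RR^s \times \RR^d)$, where $V$ is the open neighbourhood of $\pi_M^{-1}(U')$ obtained by intersecting the cone conditions in $x$, $\xi$ and $p$, and $U' \subset \BB^d \times \BB^d$ is the open neighbourhood of $(\theta,\omega)$ determined by $U_1$ and $V_\omega$. This gives $(\theta,\omega) \notin \SP_\varphi$.

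The main obstacle is the passage from the pointwise hypothesis to a uniform angle bound on the compact set $K$, and, simultaneously, the careful bookkeeping needed to verify that the resulting set $V$ is actually open in $\BB^d \times \BB^s \times \BB^d$ and covers $\pi_M^{-1}$ of an open neighbourhood of $(\theta,\omega)$, rather than only $\pi_M^{-1}(\theta,\omega)$. Once this is in place, the remaining scaling arguments are routine variants of those already used in Lemma \ref{lem:sp1}.
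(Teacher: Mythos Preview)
Your approach is correct and is precisely what the paper intends: the paper does not give a separate proof of this lemma at all, but simply prefaces the statement with ``We argue just like above for points in $(\SSS^{d-1}\times\SSS^{d-1})\cap\SP_\varphi$'', i.e.\ it refers the reader back to the proof of Lemma~\ref{lem:sp1}. Your write-up is therefore a faithful expansion of that reference, and you have correctly isolated the one genuinely new point---the need for a uniform angle bound as the base point $x$ varies in an asymptotic neighbourhood of $\theta$---which the paper leaves implicit.
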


\noindent
For the third component we get:
\begin{lemma}
Let $\varphi$ be an admissible inhomogeneous phase function of order $(n,\nu)$. Assume that for $(\omega,z)\in\SSS^{d-1}\times\RR^{d}$ there exists an open neighbourhood $U\subset\BB^{d}$ of $\omega$ with the property that,
for all $(x,\xi)\in (\RR^d\cap U)\times\RR^{s}$ satisfying $(x\infty,\xi)\in M_\varphi$, we have
$\nabla_x \varphi(x,\xi)-z\neq 0$. Then, $(\omega,z)\notin\SP_\varphi$.
\end{lemma}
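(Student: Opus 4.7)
Proof plan: The aim, following \eqref{eq:spphi}, is to exhibit open neighborhoods $U' \ni (\omega, z)$ in $\BB^d \times \BB^d$ and $V \supset \pi_M^{-1}(U')$ in $\BB^d \times \BB^s \times \BB^d$ on which $|\nabla_x \varphi(x,\xi) - p| \gtrsim \langle x \rangle^{n-1}\langle \xi \rangle^{\nu} + |p|$ at all interior points. The first ingredient is the standing assumption of this subsection: at every point of $M_\varphi$ the symbol $\langle x \rangle^2 |\nabla_x \varphi|^2$ is elliptic, so $|\nabla_x \varphi(x,\xi)| \gtrsim \langle x \rangle^{n-1}\langle \xi \rangle^{\nu}$ on some neighborhood of each $(\omega,\xi_0) \in M_\varphi$. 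Since the slice $M_\varphi \cap (\{\omega\} \times \BB^s)$ is compact, a finite covering plus a tube-lemma argument produces a product neighborhood $\tilde V_\omega \times \tilde V_\xi \subset \BB^d \times \BB^s$ on which the ellipticity bound holds uniformly with a constant $c_0 > 0$. Setting $U' := (\tilde V_\omega \cap U) \times B_\varepsilon(z)$, where $U$ is the neighborhood supplied by the hypothesis, and $V := (\tilde V_\omega \cap U) \times \tilde V_\xi \times B_\varepsilon(z)$, with $\tilde V_\omega$ possibly further shrunk so that $\pi_M^{-1}(U') \subset V$, gives the candidate pair.

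Then I would split the analysis on $V \cap (\RR^d \times \RR^s \times \RR^d)$ according to a threshold $K = K(|z|)$. In the regime $\langle x \rangle^{n-1}\langle \xi \rangle^{\nu} \geq K$, the reverse triangle inequality together with ellipticity and $|p| \leq |z| + \varepsilon$ gives $|\nabla_x \varphi - p| \geq c_0\langle x \rangle^{n-1}\langle \xi \rangle^{\nu} - (|z|+\varepsilon) \geq (c_0/2)\langle x \rangle^{n-1}\langle \xi \rangle^{\nu}$, which dominates $\langle x \rangle^{n-1}\langle \xi \rangle^{\nu} + |p|$ up to a constant. In the complementary regime $\langle x \rangle^{n-1}\langle \xi \rangle^{\nu} < K$ the target right-hand side is bounded, and it suffices to show a uniform lower bound $|\nabla_x \varphi(x,\xi) - p| \geq c_1 > 0$. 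I would argue this by contradiction: a failure would produce a sequence $(x_k,\xi_k,p_k)$ in this regime with $(x_k,p_k) \to (\omega,z)$ and $\nabla_x \varphi(x_k,\xi_k) \to z$. Compactness gives $\xi_k \to \xi_* \in \BB^s$, and closedness of $M_\varphi$ together with the accumulation at the boundary forces $(\omega,\xi_*) \in M_\varphi$. Comparing $\nabla_x \varphi(x_k,\xi_k)$ with $\nabla_x \varphi(x_k,\xi_*)$ via the mean value theorem and the $\SG$-estimate $|\nabla_\xi \nabla_x \varphi(x,\xi)| \lesssim \langle x \rangle^{n-1}\langle \xi \rangle^{\nu-1}$ (the constraint $\langle x_k \rangle^{n-1}\langle \xi_k \rangle^{\nu} < K$ controls the resulting error) yields $\nabla_x \varphi(x_k,\xi_*) \to z$ along $x_k \in U \cap \RR^d$ with $|x_k| \to \infty$ and $(x_k\infty,\xi_*) \in M_\varphi$, which directly contradicts the non-vanishing hypothesis.

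The main obstacle is this last contradiction step, where the pointwise non-vanishing hypothesis must be upgraded to a uniform separation by a boundary-limit argument in the radial compactification. In the subcase $n > 1$ the step becomes vacuous, since $\langle x_k \rangle^{n-1} \to \infty$ combined with $\langle x_k \rangle^{n-1}\langle \xi_k \rangle^{\nu} < K$ would force $\langle \xi_k \rangle^{\nu} \to 0$, impossible because $\langle \xi_k \rangle \geq 1$; thus the bounded regime is empty and the lemma follows from ellipticity alone. The delicate case is $n \leq 1$, where the comparison genuinely requires the non-vanishing hypothesis. When $\xi_* \in \SSS^{s-1}$ (possible only for $n < 1$, since then $\xi_k$ may diverge), the hypothesis does not apply directly to $\xi_*$, and the comparison must instead be made after rescaling $\xi \mapsto \xi/\langle \xi \rangle$ and tracking the symbol asymptotics in both arguments simultaneously, in the same geometric spirit as the two preceding lemmas of this subsection.
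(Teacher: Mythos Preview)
The paper states this lemma without proof, immediately after the phrase ``For the third component we get:'', so there is no detailed argument to compare against; the implied proof is simply ``argue as in Lemma~\ref{lem:sp1}''. Your split into a large regime $\langle x\rangle^{n-1}\langle\xi\rangle^\nu\ge K$ and a bounded regime, together with the reverse triangle inequality and the standing ellipticity of $\langle x\rangle^2|\nabla_x\varphi|^2$ on $M_\varphi$, handles the large regime correctly and, as you note, already finishes the case $n>1$ because the bounded regime is then empty.

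The genuine gap is in your contradiction step for the bounded regime when $n\le 1$. You produce a sequence with $\nabla_x\varphi(x_k,\xi_*)\to z$, $|x_k|\to\infty$, $x_k\in U$, and assert that this ``directly contradicts the non-vanishing hypothesis''. It does not: the hypothesis is only the pointwise statement $\nabla_x\varphi(x,\xi)\neq z$, which is perfectly compatible with $\nabla_x\varphi(x_k,\xi_*)\to z$. You are trying to upgrade pointwise non-vanishing on an unbounded set to a uniform lower bound, and nothing in your argument supplies the missing compactness. There is also a secondary gap: you claim $(x_k\infty,\xi_*)\in M_\varphi$, but all you have arranged is that $(x_k,\xi_k)$ lies in a product tube around the relevant slice of $M_\varphi$, which after passing to limits yields at best $(\omega,\xi_*)$ in the closure of that tube, not membership in $M_\varphi$ along the whole sequence.

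That this gap is not a mere technicality can be seen already in one dimension. Take $d=s=1$, $n=\nu=1$, and $\varphi(x,\xi)=\langle x\rangle\langle\xi\rangle$. Then $\nabla_x\varphi=(x/\langle x\rangle)\langle\xi\rangle$, $\nabla_\xi\varphi=\langle x\rangle\,\xi/\langle\xi\rangle$, so $M_\varphi=\{(\pm1,0)\}\subset\SSS^0\times\RR$, and the standing ellipticity assumption of the subsection holds there. For $(\omega,z)=(1,1)$ the lemma's hypothesis reads: for large $x>0$ and $\xi=0$ one has $x/\langle x\rangle\neq 1$, which is true. Yet any neighbourhood $V$ of $(1,0,1)$ in $\BB\times\BB\times\BB$ contains points $(x,0,x/\langle x\rangle)$ with $x$ large, where $|\nabla_x\varphi(x,0)-p|=0$ while $\langle\xi\rangle+|p|>1$; hence $(1,1)\in\SP_\varphi$ and the conclusion fails. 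So for $n\le1$ the pointwise hypothesis is genuinely too weak; one needs either a uniform separation $|\nabla_x\varphi(x,\xi)-z|\ge\delta>0$, or an additional structural assumption (e.g.\ classicality of $\varphi$, so that $\nabla_x\varphi(\cdot,\xi)$ extends continuously to $\SSS^{d-1}$) under which your compactness strategy would indeed go through.
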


%
% Applications
%
\section{Applications}
\label{sec:app}
\subsection{The two-point function as a generalized oscillatory integral}
\begin{definition}
\label{def:tpf}
The two-point function $\Delta_+$ of a free massive ($m>0$) scalar relativistic field on
$\RR^4\cong\RR\times\RR^3\ni(x_0,\xx)$ is defined by an oscillatory integral $I_\varphi(a)$
such that
\begin{itemize}
\item $d:=4$, $s:=3$, $\omega(\xi):=\sqrt{m^2+|\xi|^2}$;
\item $\varphi(x,\xi):=-x_0 \omega(\xi)+\xx\cdot\xi$;
\item $a(x,\xi):=\dfrac{i}{4(2\pi)^3\omega(\xi)}$.
\end{itemize}
\end{definition}

In \cite{zahn}, the two-point function was already discussed as an example of a generalized oscillatory integral in the local setting, i.e. as a distribution in $\Dist(\RR^4)$. This carried over the analysis in \cite[Chapter IX]{rs2}, where it was already discussed in the framework of classical oscillatory integrals. This was achieved by replacing $\varphi$ by the homogeneous phase function $-x_0 |\xi|+\xx\cdot\xi$ and absorbing the correction terms into the symbol, yielding a so-called asymptotic symbol.\\

To start, one computes from the definition 
$$
\nabla_\xi\varphi(x,\xi)=-\dfrac{x_0}{\omega(\xi)}\xi+\xx
\;\text{ and }\;
\nabla_x\varphi(x,\xi)=(-\omega(\xi),\xi).
$$
With this, it is easy to verify that $\varphi$ is an admissible inhomogneous $\SG$-phase function of order $(1,1)$ and that the following result holds:

\begin{lemma}
\label{lem:tpmphi}
\begin{align*} 
	(\RR^4\times\SSS^2)&\cap  M_\varphi =
	\\
	&=\big\{\big((\pm|\xx|,\xx),\pm \xx/|\xx|\big)\big| x\in\RR^3\setminus\{0\}\big\}\cup\big\{((0,0),\theta)|\theta\in\SSS^2\big\} ,
\\
	(\SSS^3\times\SSS^2)&\cap  M_\varphi =
	\\
	&=\big\{\big((\pm 1,\theta)/\sqrt{2},\pm \theta\big)\big|\ \theta\in\SSS^2\big\},
\\
	(\SSS^3\times\RR^3)&\cap M_\varphi =
	\\
	&=\big\{\big((\pm \alpha,\xx)/\sqrt{\alpha^2+\xx^2},\pm m\xx/\sqrt{\alpha^2-\xx^2}\big)|\xx\in\RR^3,\ \alpha>|\xx|\big\}.
\end{align*}
\end{lemma}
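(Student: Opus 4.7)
My plan is to unwind the definition of $M_\varphi$ one boundary component at a time, using the explicit formula $\nabla_\xi\varphi(x,\xi)=\xx-\tfrac{x_0}{\omega(\xi)}\xi$ together with the elementary asymptotic $\xi/\omega(\xi)=\eta+O(|\xi|^{-1})$ for $\xi=\tau\eta$, $\eta\in\SSS^2$, $\tau\to\infty$. Since $|\nabla_\xi\varphi|^2\in\SG^{2,0}$ in the present case (as $n=\nu=1$), its ellipticity at a boundary point $(p,\omega)$ reads $|\nabla_\xi\varphi(x,\xi)|^2\gtrsim\langle x\rangle^2$ on some conical-type neighborhood of $(p,\omega)$, and the lemma amounts to listing exactly those points where this lower bound necessarily fails.

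For $(y,\theta)\in\RR^4\times\SSS^2$, a suitable neighborhood keeps $\langle x\rangle$ uniformly bounded, so ellipticity is equivalent to a pointwise uniform positive lower bound on $|\nabla_\xi\varphi|^2$ for $|\xi|$ large along directions close to $\theta$. Along $(x,\tau\eta)\to(y,\theta)$ with $\tau\to\infty$ one computes $\nabla_\xi\varphi(x,\tau\eta)\to\yy-y_0\theta$, so non-ellipticity at $(y,\theta)$ is equivalent to $\yy=y_0\theta$. Splitting this equation by the sign of $y_0$ yields exactly the three stated families: $y_0=|\yy|\neq0$ with $\theta=\yy/|\yy|$, $y_0=-|\yy|\neq0$ with $\theta=-\yy/|\yy|$, and $y=0$ with $\theta\in\SSS^2$ arbitrary.

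For the remaining two components I would rescale $x=\sigma w$ with $w=(w_0,\ww)\in\SSS^3$ and $\sigma\to\infty$, obtaining $|\nabla_\xi\varphi|^2/\sigma^2\to|\ww-w_0\ell|^2$, where $\ell=\lim\xi/\omega(\xi)$. When $(p,\theta)\in\SSS^3\times\SSS^2$, $\ell=\theta$, so the condition $\ww=w_0\theta$ combined with $w_0^2+|\ww|^2=|\theta|^2=1$ forces $|w_0|=|\ww|=1/\sqrt 2$ and recovers the parametrization $((\pm 1,\theta)/\sqrt 2,\pm\theta)$, the two $\pm$'s being coupled because the sign of $\ww$ is dictated by that of $w_0$ through $\ww=w_0\theta$. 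When $(p,y)\in\SSS^3\times\RR^3$, $\ell=y/\omega(y)$, so the condition $\ww=w_0\,y/\omega(y)$ makes $\ww$ collinear with $y$; one then solves $w_0^2=\omega(y)^2/(m^2+2|y|^2)$ from $|p|=1$, introduces the auxiliary pair $(\xx,\alpha)$ by $\xx=\alpha\,y/\omega(y)$ so that $\alpha>|\xx|$ is automatic from $m>0$, and a direct substitution reproduces the formula given in the lemma.

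The main obstacle, and really the only nonroutine point, is arguing that these pointwise limits genuinely detect ellipticity rather than just its leading behavior. The easier direction exhibits an explicit sequence $(x_k,\xi_k)$ approaching $(p,\omega)$ along which $|\nabla_\xi\varphi|^2/\langle x\rangle^2\to 0$, blocking any uniform lower bound. For the converse, a nonzero limiting value of $|\ww-w_0\ell|$ (or, in the first component, of $|\yy-y_0\theta|$), combined with the fact that $\xi\mapsto\xi/\omega(\xi)$ extends continuously up to the boundary of $\BB^3$, provides a uniform strictly positive lower bound on a conical-type neighborhood, which after multiplication by $\sigma^2\sim\langle x\rangle^2$ yields the required estimate. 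Everything else is elementary sign bookkeeping and checking that the $(\xx,\alpha)$-parametrization in the third case is exhaustive.
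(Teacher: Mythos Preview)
Your argument is correct and is precisely the direct computation the paper has in mind: the paper does not give a proof of this lemma at all, merely stating beforehand that ``it is easy to verify'' from the explicit formulae for $\nabla_\xi\varphi$ and $\nabla_x\varphi$. Your componentwise analysis via the limit of $\xi/\omega(\xi)$ and the rescaling $x=\sigma w$ is exactly how one carries this out, and your closing paragraph correctly isolates the only point requiring care, namely that the pointwise limits really characterize ellipticity (one direction by exhibiting a violating sequence, the other by the continuous extension of $\xi\mapsto\xi/\omega(\xi)$ to $\BB^3$). One small remark: the $(\xx,\alpha)$-parametrization in the third component is homogeneous of degree zero in $(\xx,\alpha)$, so it is redundant rather than bijective; this does not affect the argument but is worth noting when you say it is ``exhaustive''.
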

\begin{remark}
Directions with $|x_0|^2>|\xx|^2$ are called timelike, directions with $|x_0|^2<|\xx|^2$ are called spacelike. By Proposition \ref{prop:css} we see that, while being rapidly decaying in spacelike directions, the twopoint-function $I_\varphi(a)$
defined above is merely smooth in timelike directions, and may not be rapidly decaying. In fact, see e.g. \cite[Theorem IX.48]{rs2}, it falls off with an inverse power. This reflects the contribution to asymptotic growth of non-asymptotic stationary phase points of the oscillatory integral, which are known, by the method of stationary phase (e.g. \cite[Section 7.7]{hoerm1} or \cite[Chapter 2]{grsj}), to produce amplitudes asymptotically behaving as (inverse) powers.
\end{remark}

With the knowledge of $M_\varphi$ it is easy to calculate $\SP_\varphi$. Therein, $\xx$ always stands for arbitrary vectors in $\RR^3\setminus\{0\}$: 
\begin{lemma}
\begin{align*}
	(\RR^4\times&\SSS^3)\cap\SP_\varphi=
	\\
	&=\big\{\big(0,(-|\xx|,\xx)/\sqrt{2}|\xx|\big)\big\}\cup\big\{\big((\pm|\xx|,\xx),(-|\xx|,\pm \xx)/\sqrt{2}|\xx|\big)\big\},
\\
	(\SSS^3\times&\SSS^3)\cap\SP_\varphi=
	\\
	&=\big\{\big((\pm 1,\theta)/\sqrt{2},(-1,\pm \theta)/\sqrt{2}\big)\big|\theta\in\SSS^2\big\},
\\
	(\SSS^3\times&\RR^4)\cap\SP_\varphi=
	\\
	&=\big\{\big((\pm\alpha,\xx)/\sqrt{\alpha^2+|\xx|^2},(-m\alpha,\pm m\xx)/\sqrt{\alpha^2-|\xx|^2}\big)\big|\alpha>|\xx|\big\}
\\
	&\cup \big\{\big((\pm 1,0),(-m,0)\big)\big\}.
\end{align*}
\end{lemma}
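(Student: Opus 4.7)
The approach is to combine the description of $M_\varphi$ from Lemma~\ref{lem:tpmphi} with the geometric characterization of $\SP_\varphi$ in Subsection~\ref{sec:spphiint}. First I would verify that the standing hypothesis of that subsection is trivially fulfilled here: since $\nabla_x\varphi(x,\xi)=(-\omega(\xi),\xi)$, one has $|\nabla_x\varphi|^2=\omega(\xi)^2+|\xi|^2=m^2+2|\xi|^2$, so $\langle x\rangle^2|\nabla_x\varphi|^2\gtrsim\langle x\rangle^2\langle\xi\rangle^2$ everywhere on $\RR^4\times\RR^3$, and the three lemmas at the end of Subsection~\ref{sec:spphiint} are therefore available globally. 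The key computation common to all three cases is: for $\theta\in\SSS^2$ and $\tau\to\infty$, the unit vector $\nabla_x\varphi(x,\tau\theta)/|\nabla_x\varphi(x,\tau\theta)|=(-\sqrt{m^2+\tau^2},\tau\theta)/\sqrt{m^2+2\tau^2}$ tends to $(-1,\theta)/\sqrt{2}$, while for finite $\xi$ the image is simply $(-\omega(\xi),\xi)$.

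For the first component, take $(y,q)\in\RR^4\times\SSS^3$. By Corollary~\ref{cor:oscicss} applied to $\SP_\varphi$, only $y\in\pi_1(M_\varphi)=\{(\pm|\xx|,\xx)\}\cup\{(0,0)\}$ can contribute. Lemma~\ref{lem:sp1} reduces the question to whether there is some $\theta$ with $(y,\theta)\in M_\varphi$ such that $\nabla_x\varphi(y,\tau\theta)$ is asymptotically parallel to $q$. Using Lemma~\ref{lem:tpmphi}, for $y=(\pm|\xx|,\xx)$ the only admissible directions are $\theta=\pm\xx/|\xx|$, giving asymptotic direction $(-1,\pm\xx/|\xx|)/\sqrt{2}=(-|\xx|,\pm\xx)/(\sqrt{2}|\xx|)$, and for $y=0$ every $\theta\in\SSS^2$ is admissible, producing the full family $\{(-|\xx|,\xx)/(\sqrt{2}|\xx|)\}$. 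Points of $\SP_\varphi$ with finite $y$ are then exactly those for which such a sequence realizes $q$; conversely, for such $(y,q)$ one reads off the failure of the estimate in \eqref{eq:spphi} by plugging $\xi=\tau\theta\to\infty$ and $p=q|p|\to q\infty$ into $|\nabla_x\varphi(x,\xi)-p|$.

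For the second component, $(y,q)\in\SSS^3\times\SSS^3$, the relevant geometric lemma reduces matters to evaluating the asymptotic direction of $\nabla_x\varphi(x,\tau\eta)$ as $x\infty\to y$ and $\tau\to\infty$, for $\eta$ such that $(x\infty,\eta)\in M_\varphi$. From $M_\varphi\cap(\SSS^3\times\SSS^2)$ we must have $y=(\pm 1,\theta)/\sqrt 2$ and $\eta=\pm\theta$, and the computation above gives the limit direction $(-1,\pm\theta)/\sqrt 2$, exactly as claimed. For the third component, $(y,q)\in\SSS^3\times\RR^4$, the third lemma of Subsection~\ref{sec:spphiint} says one only has to check whether $\nabla_x\varphi(x,\xi)=q$ can hold for some $(x,\xi)$ with $x\infty=y$ and $(y,\xi)\in M_\varphi$. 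Using the parametrization of $M_\varphi\cap(\SSS^3\times\RR^3)$ from Lemma~\ref{lem:tpmphi}, $y=(\pm\alpha,\xx)/\sqrt{\alpha^2+|\xx|^2}$ forces $\xi=\pm m\xx/\sqrt{\alpha^2-|\xx|^2}$ (the same sign), whence $\omega(\xi)=m\alpha/\sqrt{\alpha^2-|\xx|^2}$ and $\nabla_x\varphi=(-m\alpha,\pm m\xx)/\sqrt{\alpha^2-|\xx|^2}$; the degenerate case $\xx=0$ yields the separate family $\{((\pm 1,0),(-m,0))\}$.

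The converse inclusions (that the stated sets lie in $\SP_\varphi$) are verified directly from \eqref{eq:spphi}: in each case, the explicit sequences constructed above (either $\tau\theta$ going to infinity in $\xi$, or $x$ going to infinity along $y$ with $\xi$ either scaling with $\tau$ or held fixed) show that $\nabla_x\varphi(x,\xi)-p$ vanishes to leading order along any neighborhood of $\pi_M^{-1}(y,q)$ for $p$ in the appropriate direction of $q$, ruling out the estimate in \eqref{eq:spphi}. The main (minor) obstacle is really bookkeeping: keeping the signs and the scaling consistent between $y$ and $\xi$ in the third component, since the $\pm$ in $M_\varphi$ is correlated with the sign of the time component of $y$, and this correlation must be preserved in the expression for $q$.
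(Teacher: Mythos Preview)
Your proposal is correct and is essentially a fleshed-out version of what the paper does. The paper's own argument is a one-line hint: it observes that $\nabla_x\varphi(x,\xi)=(-\omega(\xi),\xi)$ is independent of $x$, so the estimate defining $\SP_\varphi^c$ in \eqref{eq:spphi} only involves the $\xi$ and $p$ variables, and then explicitly leaves the details to the reader. You fill in those details by routing the computation through the three geometric lemmas of Subsection~\ref{sec:spphiint}, which is exactly what those lemmas are there for; your verification of the standing hypothesis of that subsection (global ellipticity of $\langle x\rangle^2|\nabla_x\varphi|^2$) and your asymptotic computation of $\nabla_x\varphi(x,\tau\theta)/|\nabla_x\varphi(x,\tau\theta)|\to(-1,\theta)/\sqrt{2}$ are precisely the missing ingredients.

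One small remark: your appeal to Corollary~\ref{cor:oscicss} to restrict attention to $y\in\pi_1(M_\varphi)$ is slightly off in its citation. That corollary concerns $\Css(I_\varphi(a))$, not $\SP_\varphi$ directly; the inclusion $\pi_1(\SP_\varphi)\subset\pi_1(M_\varphi)$ you want follows straight from the definition \eqref{eq:spphi}, since if $y\notin\pi_1(M_\varphi)$ then for a small enough neighbourhood $U$ of $(y,q)$ the set $\pi_M^{-1}(U)$ is empty and the estimate holds vacuously. This does not affect the validity of your argument.
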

The cone singular support and $(\BB^4\times\SSS^3)$-part of $\WF\left(\Delta_+\right)$ is illustrated schematically (by projection onto two dimensions) in Figure \ref{fig:tpwf} (excerpted from \cite{mthesis}).
\begin{center}
\begin{figure}[!ht]
	\centering
		\includegraphics[width=0.5\textwidth]{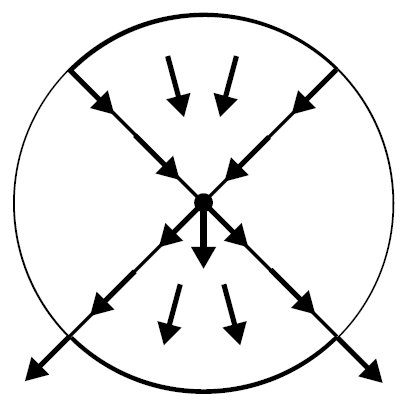}
	\caption{The cone singular support and $(\BB^4\times\SSS^3)$-part of the wave front set of $\Delta_+$.}
	\label{fig:tpwf}
\end{figure}
\end{center}
Indeed, observe that the gradient $\nabla_x\phi(x,\xi)=(-\omega(\xi),\xi)$ is independent of $x$. Thus, in \eqref{eq:spphi} we only need to vary $|\nabla_x\varphi(x,\xi)-p|$ with respect to $\xi$ and $p$ variables. The details are left for the reader.
\begin{remark}
By the change
\begin{equation*}
	\frac{\xx}{\sqrt{\alpha^2+|\xx|^2}}\longrightarrow \frac{\kk}{\sqrt{\omega_\kk^2+|\kk|^2}}
\end{equation*}
(both sides can be seen to be surjective onto $\{v\in\RR^3\colon |v|<1/\sqrt{2}\}$), we get an alternate parametization of 
$(\SSS^3\times\RR^4)\cap\SP_\varphi\,$:
$$
(\SSS^3\times\RR^4)\cap\SP_\varphi=\big\{\big((\pm\omega_k,\pm \kk)/\sqrt{\omega_k^2+|\kk|^2},(-\omega_k,\kk)\big)\big|\kk\in\RR^3\big\}.
$$
\end{remark}
\begin{theorem}
In the case of the two-point function $\Delta_+$ defined as a tempered, $\SG$-oscillatory integral, 
we have $\WF(\Delta_+)=\SP_\varphi\,$.
\label{thm:tpfwf}
\end{theorem}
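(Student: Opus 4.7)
The inclusion $\WF(\Delta_+)\subset \SP_\varphi$ is immediate from Theorem \ref{thm:osciwf}, so only the reverse inclusion $\SP_\varphi\subset\WF(\Delta_+)$ remains. The essential feature already exploited in the computation of $\SP_\varphi$ above is that $\nabla_x\varphi(x,\xi)=(-\omega(\xi),\xi)$ is independent of $x$; this turns the $x$-integration in $\langle\Delta_+,u\rangle$ (after Fourier expansion of $u$) into an exact delta and yields the explicit formula
\begin{equation*}
\widehat{\Delta_+}(p)=\frac{i\pi}{2\omega(\pp)}\,\delta\bigl(p_0+\omega(\pp)\bigr),
\end{equation*}
so that $\widehat{\Delta_+}$ is a smooth, nowhere-vanishing density on the lower mass shell $\mathcal{M}^-=\{p\in\RR^4:p_0=-\omega(\pp)\}$. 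The whole strategy is to compute $\WF(\widehat{\Delta_+})$ globally from this formula and then transport the result via the Fourier duality $(p,q)\in\WF(\Delta_+)\Leftrightarrow(q,-p)\in\WF(\widehat{\Delta_+})$ from Proposition \ref{pr:wfeign}.

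For the classical (local) part, the standard conormal-bundle computation for a smooth density on a hypersurface gives $\WF_{cl}(\widehat{\Delta_+})$ equal to the conormal bundle of $\mathcal{M}^-$ (minus the zero section): at $(-\omega(\pp),\pp)$ the covectors are proportional to $(1,-\pp/\omega(\pp))$, with asymptotic $\SSS^3$-direction $\pm(\omega(\pp),-\pp)/\sqrt{\omega(\pp)^2+|\pp|^2}$. Applying duality and using the alternate parametrization given in the remark preceding the theorem, these produce exactly the $(\SSS^3\times\RR^4)\cap\SP_\varphi$ component; sending $|\pp|\to\infty$ in both base point and covector yields the $(\SSS^3\times\SSS^3)\cap\SP_\varphi$ piece arising from timelike directions at infinity.

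For the remaining parts of $\WF(\widehat{\Delta_+})$ with base in $\SSS^3$, one observes that $\widehat{\Delta_+}$ is manifestly not Schwartz along $\mathcal{M}^-$: the $1/\omega(\pp)$ weight yields only polynomial decay. Hence $\Css(\widehat{\Delta_+})$ must contain the asymptote of $\mathcal{M}^-$, namely $\{(\pm 1,\theta)/\sqrt 2:\theta\in\SSS^2\}$, and combined with the asymptotic conormal covectors this produces, after duality, the remaining $(\RR^4\times\SSS^3)$ and $(\SSS^3\times\SSS^3)$ pieces of $\SP_\varphi$ listed in the lemma preceding the theorem. A term-by-term comparison with the three components displayed there then yields $\WF(\Delta_+)\supset\SP_\varphi$, hence equality.

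The main technical obstacle lies in this last step: verifying that $\widehat{\Delta_+}$ genuinely fails to be rapidly decreasing in \emph{precisely} the predicted asymptotic direction--covector pairs, and not in a larger set. I would handle it by testing against asymptotic cut-offs $\psi_R$ of the form introduced in Section \ref{sec:prel}, centered at points of the asymptote of $\mathcal{M}^-$, and analysing the $p$-decay of $\mathcal{F}(\psi_R\,\widehat{\Delta_+})(p)$ along the candidate rays by stationary phase applied to the single-variable $\xi$-integral (the $x$-integration being trivial); the non-degeneracy of the restriction of the phase to the mass shell, together with the non-vanishing density $1/\omega$, forces exactly the stated non-rapid decay and no more.
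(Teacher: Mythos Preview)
Your approach is correct and shares its core ingredient with the paper's proof: both compute $\widehat{\Delta_+}$ explicitly as a smooth density on the lower mass shell and invoke the Fourier duality of Proposition~\ref{pr:wfeign}. The difference lies in how the three components of the reverse inclusion are handled.

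The paper proceeds as follows: the classical part $(\RR^4\times\SSS^3)\cap\WF(\Delta_+)$ is taken directly from \cite[Theorem IX.48]{rs2}, where it is obtained via Lorentz invariance of $\Delta_+$; the corner $(\SSS^3\times\SSS^3)$ then drops out purely from the closedness of $\WF(\Delta_+)$ in $\partial(\BB^4\times\BB^4)$; and only the $(\SSS^3\times\RR^4)$ component is read off from $\widehat{\Delta_+}$, using nothing more than the \emph{classical} conormal-bundle description \cite[Example 8.2.5]{hoerm1} together with the alternate parametrization in the remark preceding the theorem. Your route, by contrast, pushes all three components through $\widehat{\Delta_+}$. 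This is more self-contained and avoids the external input from Reed--Simon, but it forces you to determine the full \emph{global} wave front set of a density on a non-compact hypersurface --- in particular its $(\SSS^3\times\RR^4)$ part --- which is precisely the ``technical obstacle'' you flag at the end. The paper sidesteps that obstacle entirely: it never needs anything beyond $\WF_{cl}(\widehat{\Delta_+})$. Your stationary-phase sketch for filling the gap is plausible, but note a small slip: the asymptote of $\mathcal{M}^-=\{p_0=-\omega(\pp)\}$ in $\SSS^3$ is only $\{(-1,\theta)/\sqrt{2}:\theta\in\SSS^2\}$, not $\{(\pm 1,\theta)/\sqrt{2}\}$, since $p_0<0$ on the whole shell.
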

\begin{proof}
One inclusion follows by Theorem \ref{thm:osciwf}. For the opposite inclusion, we argue in three steps, using the
properties of the global wave front set of temperate distributions.\\

The $(\RR^4\times\SSS^3)$-part (i.e. $\WF_{\mathrm{cl}}(\Delta_+)$) was determined in \cite[Theorem IX.48]{rs2} by using Lorentz-invariance.\\

The $(\SSS^3\times\SSS^3)$-part then follows by the closedness of the global wave front set in $\partial(\BB^4\times\BB^4)$.\\

The last part can be shown by using the symmetry of the global wave front set under Fourier transformation,
see Proposition \ref{pr:wfeign}. In fact,
\begin{equation}
	\widehat{\Delta_+}(k)=\frac{i}{(2\pi)^2}\frac{\delta(k_0+\sqrt{m^2+\mathbf{k}^2})}{\sqrt{m^2+\mathbf{k}^2}},
	\quad k=(k_0,\mathbf{k}),
\end{equation}
thus the claim follows by the fact that the wave front set of such a distribution is the set of normals to its support (see, e.g., \cite[Example 8.2.5]{hoerm1}) and by the previous remark.
\end{proof}
\noindent
This agrees with the results obtained in \cite[Section 2.3]{mthesis}.
\subsection{Fourier integral operators}
We recall here the basic notions concerning classical Fourier integral operators, both on open subsets of
$\RR^n$ as well as on manifolds, to better explain the link with our analysis above.
The original theory goes back to Eskin \cite{Es70} and H\"ormander \cite{Ho71}, see
also Duistermaat and H\"ormander \cite{DuHo72}, Grigis and
Sjoistrand \cite{grsj}, H\"ormander \cite{hoerm1} and Sogge \cite{So93}.
\\

	Let $X\subset\RR^{n_X}$, $Y\subset\RR^{n_Y}$ be open domains.
	A linear operator $A:C^{\infty}_0(Y) \to \mathcal{D}^\prime(X)$ is a
	Fourier integral operator if its kernel $K_{A}\in \mathcal{D}^\prime(X \times Y)$ is an oscillatory integral of the form
	\begin{equation}
		\label{eq:ker}
		K_A(x,y)= \int e^{i\phi(x,y, \theta)} a(x,y, \theta) d\theta,
	\end{equation}
	where the phase function $\phi \in C^{\infty}(X \times Y \times (\RR^N\setminus\{0\}))$ is real-valued and 
	positively homogeneus of degree one in $\theta$, and the amplitude function $a$ is a symbol in
	$S^\mu(X\times Y\times \RR^N)$ for some $\mu \in \RR$, explicitly
	\[
		|D^\alpha_\theta D^\beta_x D^\gamma_y a(x,y,\theta)|\le C_{V;\alpha\beta\gamma}(1+|\theta|)^{\mu-|\alpha|},
	\]
	for all multiindices $\alpha,\beta,\gamma$, and all $(x,y)\in V\subset\subset X\times Y$, $\theta\in\RR^N$. Then, formally,
	\begin{equation}
		\label{eq:FIO}
		Au(x)= \iint e^{i\phi(x,y, \theta)} a(x,y, \theta)u(y)dyd\theta, \quad u \in C_0^{\infty}(Y).
	\end{equation}

\noindent
Usually, the phase function $\phi$ satisfies the non-degeneracy conditions
\[
	\phi^\prime_{y,\theta}(x,y,\theta)\not=0\mbox{ and }
	\phi^\prime_{x,\theta}(x,y,\theta)\not=0, \quad x\in X, y\in Y, \theta\in\RR^N\setminus\{0\}.
\]
In such a case, $A$ is a bounded operator from $C_0^\infty(Y)$ to $C^\infty(X)$, extendable to a continuous operator $A\colon\mathcal{E}^\prime(Y)\to\mathcal{D}^\prime(X)$. In particular, if $n_1=n_2=N$ and $\phi(x,y, \theta)= (x-y)\cdot \theta$, $A$ in \eqref{eq:FIO} is a pseudodifferential operator of order $\mu$. 

The ``local'' definition given above can be extended to manifolds, by means of the concept of Lagrangian distribution, see, e.g., \cite{Ho71,hoerm1,So93}, which we now also recall. First, let $^\infty H_\sigma(\RR^n)$, $\sigma\in\RR$,
	   denote the space of all $u\in\mathcal{S}^\prime(\RR^n)$ such that $\hat{u}\in L^2_{\mathrm{loc}}(\RR^n)$ and
	\begin{align*}
		\|u\|_{^\infty H_\sigma(\RR^n)}&=\left(\int_{|\xi|\le 1}|\hat{u}(\xi)|^2d\xi\right)^\frac{1}{2}+
		\sup_{j\ge0}\left(\int_{2^j\le|\xi|\le 2^{j+1}}|2^{\sigma j}\hat{u}(\xi)|^2d\xi\right)^\frac{1}{2}
		\\
		&<\infty.
	\end{align*}
	If $X$ is a closed smooth manifold of dimension $n$, $^\infty H^\mathrm{loc}_\sigma(X)$ is defined as the set of 
	all $u\in\mathcal{D}^\prime(X)$ such that
	$(\psi u)\circ\kappa^{-1}$ is in ${^\infty H_\sigma(\RR^n)}$ for any local coordinate system
	$\kappa\colon U\subset X\to\RR^n$ and $\psi\in C_0^\infty(U)$.
	
	Then, if $\Lambda\subset T^*X\setminus 0$ is a smooth closed conic (immersed) Lagrangian submanifold,
	$u\in \mathcal{D}^\prime(X)$
	belongs to the space $I^m(X,\Lambda)$
	of all Lagrangian distributions of order $m$ associated with $\Lambda$ if
	\[
		\prod_{j=1}^M P_ju\in{^\infty H^\mathrm{loc}_{-m-\frac{n}{4}}(X)},
	\]
	whenever $P_j$ are classical pseudodifferential operators of order $1$ whose principal symbols $p_j$ vanish on
	$\Lambda$.

\begin{definition}
	\label{def:globalfio}
	Given two smooth closed manifolds $X$ and $Y$ and a smooth closed conic Lagrangian submanifold 
	$\Lambda\subset T^*(X\times Y)\setminus 0$, an integral operator $A$ with kernel $K_A \in I^m(X \times Y, \Lambda)$ is a
	Fourier integral operator of order $m$ if $\Lambda\subset\{(x,y,\xi,\eta)\in T^*(X\times Y)\setminus 0\colon \xi\not=0,\eta\not=0\}$.
	In such case, we will simply write $A\in I^m(X, Y;\Lambda)$.
\end{definition}

\noindent
It turns out that, in local coordinates on $X$ and $Y$, the kernels of Fourier integral operators are of type \eqref{eq:ker}
modulo $C^\infty(X\times Y)$, with the non-degenerate phase function $\phi$ locally parametrizing $\Lambda$, in the sense that, setting
\[
	\Sigma_\phi=\left\{(x,y,\theta)\colon \phi^\prime_\theta(x,y,\theta)=0\right\},
\]
the map $\displaystyle(x,y,\theta)\mapsto(x,y,\phi^\prime_{x,y}(x,y,\theta))$ is a local homogeneous diffeomorphism of $\Sigma_\phi$
onto $\Lambda$. The principal symbol of $A$ can also be invariantly defined. 

A calculus for these operators can be established, see \cite{Ho71, hoerm1, So93}. Also, properties of the adjoint operators and rules for the computation of the principal symbols of $A_1\circ A_2$ and $A^*$
can be given as well. Other important aspects of the theory concern the propagation of wave front sets and the 
boundedness on different functional spaces: these can be applied to the study of the regularity of solutions of Cauchy problems associated with hyperbolic equations, see, for instance, the celebrated theorems of boundedness
of Fourier integral operators by K. Asada and D. Fujiwara \cite{AF78} on $L^2$, 
and by A. Seeger, C.D. Sogge and E.M. Stein \cite{SSS91, So93} on $L^p$,
$1<p<\infty$, respectively, and their corollaries.

\subsection{Classes of \texorpdfstring{$\SG$}{SG}-Fourier integral operators on \texorpdfstring{$\RR^d$}{RRd}}
One of our motivations to study the class of tempered oscillatory integrals described in the previous sections is to use them
to give a definition of Fourier integral operator within the $\SG$ framework which is a good, general, local model for
Fourier operators on manifolds with ends or, more generally, on $\SG$-manifolds. 
The study of Fourier integral operators defined through elements belonging to the $\SG$-symbol classes started in
Coriasco \cite{cor} and \cite{Co99a}, and is an interesting field of active research, with developments in many different
directions. These include, just to mention a few,  Andrews \cite{AndPhD}, for an approach
based on more general phase functions than those appearing in \cite{cor, Co99a}, Cappiello, Rodino \cite{CaRo}, for results involving Gelfand-Shilov spaces, Cordero, Nicola, Rodino \cite{CNR07,CNR10},
for boundedness results on $\mathcal{F}{L^p(\RR^n)}_\mathrm{comp}$ and the modulation spaces, 
Ruzhansky, Sugimoto \cite{RS06a} for the global $L^2(\RR^d)$-boundedness, Coriasco, Ruzhansky \cite{CR08},
for the global $L^p(\RR^n)$-boundedness, $p\not=2$ (see also the references quoted therein).

A global definition of $\SG$-Fourier integral operator, on manifolds which are Euclidean at infinity,
in terms of tempered oscillatory integrals as those described above is
a quite natural idea, but a number of difficulties arise. In fact, we would then need to control the behavior at infinity of the involved distributions. Moreover, we would be forced to make use only of certain ``admissible'' change of variables, see \cite{Sc}, and also 
the notion of ``smoothing remainder'' is different, compared with the ``classical'' situation summarized above.
In the sequel, we introduce two classes of Fourier integral operators on $\RR^d$, in terms of temperate oscillatory integrals. The first one will be a class of Fourier integral operators allowing for phase function where all variables are connected, e.g. $(\langle x\rangle+\langle y\rangle)^2\langle\xi\rangle$. The disadvantage is that the approach is not suited to treat pseudodifferential operators. The second class does not have this disadvantage, but only allows for phase functions of type $\varphi(x,y,\xi)=\varphi_x(x,\xi)+\varphi_y(y,\xi)$. On the other hand, the latter will cover the important situation where the involved canonical relation is the graph of a symplectomorphism of $T^*\RR^d$ onto itself. The analysis carried over in the next subsection can then be considered the first step toward a global definition of $\SG$ Fourier integral operators on non-compact $\SG$-manifolds: we plan to fully describe such concept in a forthcoming paper.

\subsection{A first class of \texorpdfstring{$\SG$}{SG}-Fourier integral operators}
First we translate the approach of \cite[Chapter 1]{grsj} to the global setting of $\RR^d$.
Having introduced a notion of an $\SG$-osciallatory integral we can define a corresponding class of operators via the Schwartz Kernel Theorem, which says that there is a bijection between the distributions $K\in\Swd\left(\RR^{d_x}\times\RR^{d_y}\right)$ and the continuous linear operators $A:\Sw\left(\RR^{d_y}\right)\rightarrow\Swd\left(\RR^{d_x}\right)$, $(d_x,d_y)\in\NN\times\NN$, given by 
$$
	\langle A f,\ g\rangle=\langle K,\ f\otimes g\rangle, \quad f\in\Sw\left(\RR^{d_y}\right),\ g\in\Sw\left(\RR^{d_x}\right).
$$
Then, with the temperate distributions defined in Section \ref{sec:oscidef} we associate an operator as follows:
\begin{definition}
Set $d=d_x+d_y$, $\RR^{d_x}\times\RR^{d_y}\cong\RR^{d_x+d_y}\ni(x,y)$. Let $a\in\SGmm(\RR^d\times\RR^s)$ and $\varphi$ an admissible $\SG$-phase function (in the sense of Definiton \ref{def:phase}). Then $K:=I_\varphi(a)\in\Swd\left(\RR^{d_x}\times\RR^{d_y}\right)$ and the associated continuous linear operator $A:\Sw\left(\RR^{d_y}\right)\rightarrow\Swd\left(\RR^{d_x}\right)$, is a \emph{Fourier integral operator} (\emph{FIO}). 
\end{definition}
Such an operator can be written formally as
\begin{equation}
	Af(x)=\int_{\RR^{d_y}\times\RR^s} e^{i\varphi(x,y,\xi)} a(x,y,\xi)f(y)\ \dd y\dd \xi.
\label{eq:fiodef}
\end{equation}
\begin{remark}
Note that this approach is not suited to treat $\SG$-pseudo-differential operators, since $\varphi(x,y,\xi)=(x-y)\cdot\xi$
does not satisfy \eqref{eq:phaseineq}.
\end{remark}
Our analysis of $\SG$-oscillatory integrals grants us certain facts about the corresponding class of FIOs, as in the classical setting, cfr. \cite[Theorem 1.17]{grsj}.
\begin{theorem}
Suppose that the phase function $\varphi(x,y,\xi)$ of a FIO $A$ is for each value in $x$ a phase function (of order order $(n,\nu)$) in the variables $(y,\xi)$, namely, that it satisfies, for some $R>0$ (independent of $x$) and $|y|+|\xi|>R$,
$$
\langle y\rangle^2|\nabla_y\varphi(x,y,\xi)|^2+\langle \xi\rangle^2|\nabla_\xi \varphi(x,y,\xi)|^2\gtrsim \langle y\rangle^{2n}\langle \xi\rangle^{2\nu}.
$$
Then $A$ takes values in $\Sm\!\left(\RR^{d_x}\right)$, that is, $A$ is a linear map from $\Sw\!\left(\RR^{d_y}\right)$ to
$\Sm\!\left(\RR^{d_x}\right)\cap\Swd\!\left(\RR^{d_x}\right)$.\\

If for some $R>0$ and $|x|+|y|+|\xi|>R$ it even holds
$$
\langle (x,y)\rangle^2|\nabla_y\varphi(x,y,\xi)|^2+\langle \xi\rangle^2|\nabla_\xi \varphi(x,y,\xi)|^2\gtrsim \langle (x,y)\rangle^{2n}\langle \xi\rangle^{2\nu},
$$
then $A$ takes values in $\Sw\!\left(\RR^{d_x}\right)$.\\

If instead, for some $R>0$ (independent of $y$) and $|x|+|\xi|>R$,
$$
\langle x\rangle^2|\nabla_x\varphi(x,y,\xi)|^2+\langle \xi\rangle^2|\nabla_\xi \varphi(x,y,\xi)|^2\gtrsim 
\langle x\rangle^{2n}\langle \xi\rangle^{2\nu},
$$
then $A$ is (uniquely) extendable to a continuous map from $\mathcal{E}^\prime\!\left(\RR^{d_y}\right)$ to
$\Swd\!\left(\RR^{d_x}\right)$.\\

If for some $R>0$ and $|x|+|y|+|\xi|>R$ it even holds
$$
\langle (x,y)\rangle^2|\nabla_x\varphi(x,y,\xi)|^2+\langle \xi\rangle^2|\nabla_\xi \varphi(x,y,\xi)|^2\gtrsim \langle (x,y)\rangle^{2n}\langle \xi\rangle^{2\nu},
$$
then $A$ is (uniquely) extendable to a continuous map from $\Swd\!\left(\RR^{d_y}\right)$ to $\Swd\!\left(\RR^{d_x}\right)$.
\end{theorem}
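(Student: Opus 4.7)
The plan is to adapt the regularization argument of Lemma \ref{lem:vdef} to the present parametric setting for the first two assertions, and then to obtain the last two by transposition.

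For the first assertion, I would fix $x\in\RR^{d_x}$ and view $(y,\xi)\mapsto\varphi(x,y,\xi)$ as an admissible $\SG$-phase function of order $(n,\nu)$ on $\RR^{d_y}\times\RR^s$; the hypothesis is exactly the non-degeneracy condition \eqref{eq:phaseineq} with this interpretation, and it is uniform in $x$ since $R$ is independent of $x$. Repeating the construction in the proof of Lemma \ref{lem:vdef} in the variables $(y,\xi)$ yields coefficients $u,v,w$ which are jointly smooth in $(x,y,\xi)$ and give, for each $x$, a continuous operator $P_x\colon\SGmm(\RR^{d_y},\RR^s)\to\SG^{m-n,\mu-\nu}(\RR^{d_y},\RR^s)$ with ${}^tP_x\,e^{i\varphi(x,y,\xi)}=e^{i\varphi(x,y,\xi)}$ and seminorm bounds uniform for $x$ in compact sets. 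For $f\in\Sw(\RR^{d_y})$ and $r\in\ZZ_+$, iterated integration by parts then gives
\begin{equation*}
[Af](x)=\int_{\RR^{d_y}\times\RR^s}e^{i\varphi(x,y,\xi)}\,P_x^r\bigl(a(x,y,\xi)\,f(y)\bigr)\,\dd y\,\dd\xi,
\end{equation*}
absolutely convergent for large $r$, as in Lemma \ref{lem:oscireg}. Differentiation under the integral sign is justified because each $\partial_x^\alpha$ acts on the phase via $\nabla_x\varphi\in\SG^{n,\nu}$ and on $a$ without changing its order; the resulting growing factors are offset by choosing $r$ correspondingly larger. Hence $Af\in\Sm(\RR^{d_x})$, and $Af\in\Swd(\RR^{d_x})$ is automatic from the Schwartz Kernel Theorem applied to $K=I_\varphi(a)\in\Swd(\RR^{d_x+d_y})$.

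The second assertion is established by the same method, only now the stronger hypothesis provides $\SG$-ellipticity of the symbol $\langle(x,y)\rangle^2|\nabla_y\varphi|^2+\langle\xi\rangle^2|\nabla_\xi\varphi|^2$ on $\RR^{d_x+d_y}\times\RR^s$ with respect to the joint weight $\langle(x,y)\rangle$. Consequently, $P$ can be constructed so as to map $\SG^{m,\mu}(\RR^{d_x+d_y},\RR^s)$ continuously into $\SG^{m-n,\mu-\nu}(\RR^{d_x+d_y},\RR^s)$ with the same joint weight. After $r$-fold regularization one obtains the pointwise bound $|P^r(a(x,y,\xi)f(y))|\lesssim\|a\|_s\,\rho_r(f)\,\langle(x,y)\rangle^{m-rn}\langle\xi\rangle^{\mu-r\nu}$. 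A direct computation gives $\int\langle(x,y)\rangle^{m-rn}\dd y\lesssim\langle x\rangle^{m-rn+d_y}$ once $m-rn<-d_y$, so that $|Af(x)|\lesssim\langle x\rangle^{m-rn+d_y}$, which tends to arbitrarily fast decay as $r\to\infty$. The same bound applied to $\partial_x^\alpha Af$ completes the proof that $Af\in\Sw(\RR^{d_x})$.

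For the last two assertions, I would invoke transposition. The transpose $A^t$ of $A$ has kernel $K^t(y,x)=K(x,y)$ and is itself a Fourier integral operator in the present class, with the same phase $\varphi$ and amplitude $a$ but with $y$ now acting as the output variable and $x$ as the input variable. The hypothesis of the third (resp.\ fourth) assertion is precisely the non-degeneracy condition of the first (resp.\ second) assertion applied to $A^t$. Applying the corresponding part to $A^t$ yields that $A^t\colon\Sw(\RR^{d_x})\to\Sm(\RR^{d_y})\cap\Swd(\RR^{d_y})$ (resp.\ $A^t\colon\Sw(\RR^{d_x})\to\Sw(\RR^{d_y})$) is continuous. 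Dualising then produces the unique continuous extension of $A$ to $\mathcal{E}^\prime(\RR^{d_y})\to\Swd(\RR^{d_x})$ (resp.\ $\Swd(\RR^{d_y})\to\Swd(\RR^{d_x})$), its agreement with the original $A$ on $\Sw(\RR^{d_y})$ following from the defining relation $\langle Au,g\rangle=\langle u,A^tg\rangle$. The main obstacle throughout will be the careful bookkeeping for the parametric version of Lemma \ref{lem:vdef}: one has to verify that the coefficients of $P$ are jointly smooth and possess the $\SG$-regularity needed both to commute the integral with $\partial_x^\alpha$ and, in the second and fourth cases, to belong to the full joint-weight $\SG$-classes; once these technical points are settled, the remaining estimates are routine variants of the one leading to \eqref{eq:oscicont}.
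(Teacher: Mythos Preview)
Your proposal is correct and follows essentially the same approach as the paper's own (outlined) proof: a parametric version of the regularization argument of Lemma~\ref{lem:vdef}/Lemma~\ref{lem:oscireg} for the first assertion, the stronger joint-weight ellipticity to gain decay in $\langle x\rangle$ for the second (the paper phrases this as ``repeated application of the regularizing operator as in Proposition~\ref{prop:css}''), and transposition/duality for the last two. The only minor slip is that $\nabla_x\varphi\in\SG^{n-1,\nu}$ rather than $\SG^{n,\nu}$, but this is inconsequential for the argument.
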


\begin{proof}[Outline of the proof]
By Theorem \ref{thm:oscidef}, $I_\varphi(x,\cdot)(a(x,\cdot))$ is a tempered distribution for each $x$. In fact, by following the outline of the proof and differentiation under the integral sign, it is possible to show that, for $f\in\Sw(\RR^{d_y})$, 
the function $\langle I_\varphi(x,\cdot)(a(x,\cdot)),\ f\rangle$ is smooth (and polynomially bounded together with all its derivatives). For the stronger version, it is enough to apply the regularizing operator repeatedly, to acquire arbitrarily fast decay in $\langle x\rangle$ as in the proof of Proposition \ref{prop:css}.\\

Recall that the transpose $ {^tA}\colon\Sw\!\left(\RR^{d_x}\right)\rightarrow \Swd\!\left(\RR^{d_y}\right)$ is related to $A$ via
$$
\langle f, {^t\! A}g\rangle=\langle Af,g\rangle,\quad g\in\Sw(\RR^{d_x}),\ f\in\Sw(\RR^{d_y}),
$$
that is, by interchanging the roles of $x$ and $y$ in the kernel.
Then, the second part follows from the first part by duality.
\end{proof}

\begin{example}
Consider $\varphi(x,y,\xi)=(\langle x\rangle+\langle y\rangle)^n\langle\xi\rangle^\nu$, $n,\nu>0$.
\end{example}

\subsection{\texorpdfstring{$\SG$}{SG}-Fourier integral operators of composite type}
In this subsection we will define a class of Fourier integral operators as the composition of two oscillatory integrals: this class of FIOs will include $\SG$-pseudodifferential operators.\\

Throughout this subsection let $(d_x,d_y,d_\xi)\in\NN^3$.
Consider a $\SG$-phase function $\varphi_y(y,\xi)\in\SG\left(\RR^{d_y},\RR^{d_\xi}\right)$ and a $\SG$-symbol $a_y\in\SGjj$
$\left(\RR^{d_y},\RR^{d_\xi}\right)$. Then, by Theorem \ref{thm:oscidef}, the operator
\begin{equation}
A_{y\xi}:\Sw\!\left(\RR^{d_y}\right)\rightarrow \Swd\!\left(\RR^{d_\xi}\right)\colon f\mapsto
A_{y\xi}(f):=I_{\varphi_{y\xi}}\left(a_{y\xi}f\right)
\label{eq:Adef}
\end{equation}
is linear and continuous.

\begin{lemma}
\label{lem:extend}
If $\nabla_y\varphi$ is globally elliptic, in the sense of \eqref{eq:ellip}, that is, $M_{\varphi_y}=\emptyset$, then $A_{y\xi}$ takes values in $\Sw\!\left(\RR^{d_\xi}\right)$.\\
If $\nabla_\xi\varphi$ is globally elliptic, then $A_{y\xi}$ has a (unique) continuous extension to $\Swd\!\left(\RR^{d_y}\right)\rightarrow \Swd\!\left(\RR^{d_\xi}\right)$.
We call a phase function satifying both conditions a \emph{regular phase function}.
\end{lemma}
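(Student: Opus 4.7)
The plan is to deduce both assertions from Proposition \ref{prop:css} combined with a transpose/duality argument. Throughout, we interpret $I_{\varphi_{y\xi}}$ in the convention of Theorem \ref{thm:oscidef}: the output lives on the $\xi$-space $\RR^{d_\xi}$, while the $y$ variable is the one being integrated out. With this convention, the set ``$M_{\varphi_y}$'' appearing in the first hypothesis is the specialization of \eqref{eq:mphi} with the gradient with respect to the integration variable (here $\nabla_y$) replacing $\nabla_\xi$.

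For the first claim, fix $f\in\Sw(\RR^{d_y})$. The product $a_{y\xi}(y,\xi)\,f(y)$ lies in $\SGim(\RR^{d_y}\times\RR^{d_\xi})\subset\SGjj$, and hence is an admissible amplitude for Theorem \ref{thm:oscidef}. The hypothesis $M_{\varphi_y}=\emptyset$ renders the condition $\Csp(a_{y\xi}f)\cap M_{\varphi_y}=\emptyset$ of Proposition \ref{prop:css} automatic, so that $A_{y\xi}(f)=I_{\varphi_{y\xi}}(a_{y\xi}f)\in\Sw(\RR^{d_\xi})$. Continuity of $A_{y\xi}:\Sw(\RR^{d_y})\to\Sw(\RR^{d_\xi})$ is then obtained by inspecting the quantitative estimates used in the proofs of Proposition \ref{prop:css} and Theorem \ref{thm:oscidef}: the regularizing operator $Q$ constructed there, applied $r$ times, converts the amplitude to one of symbol order $(-rn,\mu-r\nu)$, providing arbitrary polynomial decay in $\langle\xi\rangle$, while the multiplication $a_{y\xi}\cdot f$ is continuous from $\SGmm$ into $\SGim$ with seminorms dominated by Schwartz seminorms of $f$.

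For the second claim, introduce the formal transpose ${^t}A_{y\xi}$, determined by $\langle {^t}A_{y\xi}(g),f\rangle:=\langle g,A_{y\xi}(f)\rangle$ for $f\in\Sw(\RR^{d_y})$ and $g\in\Sw(\RR^{d_\xi})$. Formally both sides equal $\iint e^{i\varphi_{y\xi}(y,\xi)}\,a_{y\xi}(y,\xi)\,f(y)\,g(\xi)\,dy\,d\xi$, which exhibits ${^t}A_{y\xi}(g)$ as the oscillatory integral with the same phase and amplitude but read in the opposite orientation, with $y$ now playing the role of the base variable and $\xi$ the role of the integration variable. Making this identification rigorous for general $a_{y\xi}\in\SGjj$ requires Fubini for amplitudes in $\SGii$ together with the density statement of Proposition \ref{pr:symbdense}. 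Under this reversed orientation, the hypothesis that $\nabla_\xi\varphi$ is globally elliptic is precisely the condition that the analogue of $M_{\varphi_y}$ is empty; the first claim, applied to ${^t}A_{y\xi}$, then produces a continuous linear map $\Sw(\RR^{d_\xi})\to\Sw(\RR^{d_y})$. Defining, for $u\in\Swd(\RR^{d_y})$ and $g\in\Sw(\RR^{d_\xi})$, $\langle A_{y\xi}(u),g\rangle:=\langle u,{^t}A_{y\xi}(g)\rangle$ yields the desired continuous extension $\Swd(\RR^{d_y})\to\Swd(\RR^{d_\xi})$, with uniqueness following from the density of $\Sw(\RR^{d_y})$ in $\Swd(\RR^{d_y})$.

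The main technical obstacle is to verify rigorously that ${^t}A_{y\xi}$ falls within the scope of Theorem \ref{thm:oscidef} after swapping the roles of the two variables, so that the first assertion of the lemma can be invoked for it under the reversed ellipticity condition. Once this symmetry of the construction is in place, both halves of the lemma reduce to a single appeal to Proposition \ref{prop:css}.
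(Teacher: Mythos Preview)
Your proposal is correct and follows essentially the same route as the paper: the first assertion is obtained from Proposition \ref{prop:css} (the paper cites its immediate consequence, Corollary \ref{cor:oscicss}), and the second by identifying ${^t}A_{y\xi}$ with the oscillatory integral having the roles of $y$ and $\xi$ swapped, then applying the first part and extending by duality and density. The paper's argument is terser, but the structure is the same; your additional remarks on continuity and on the Fubini/approximation justification for the transpose identity fill in details the paper leaves implicit.
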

\begin{proof}
The first statement follows from Corollary \ref{cor:oscicss}.
The second one follows again by considering the transposed operator. In fact, let $f\in\Sw\!\left(\RR^{d_\xi}\right),\ g\in\Sw\!\left(\RR^{d_y}\right)$. Then, in the sense of oscillatory integrals,
\begin{align*}
	\langle\, {^t\!A_{y\xi}} g,f\,\rangle&=\langle g, A_{y\xi}f\rangle
\\
	&=\int \left(\int e^{i\varphi(y,\xi)}f(\xi)a(y,\xi)\ \dd\xi\right) g(y)\ \dd y
\\
	&=\int \left(\int e^{i\varphi(y,\xi)}g(y)a(y,\xi)\ \dd y\right) f(\xi)\ \dd \xi,
\end{align*}
by interchanging the roles of $y$ and $\xi$, i.e. $^t\!A_{y\xi}= {^t\!A_{\xi y}}$. Thus, if we define the operator on tempered distributions by duality, the second statement follows from the first and the density of $\Sw$ in $\Swd$.
\end{proof}
If $d_y=d_\xi$, we can further denote the Fourier transforms $\Sw\left(\RR^{d_y}\right)\rightarrow\Sw\left(\RR^{d_y}\right)$ and 
$\Swd\left(\RR^{d_y}\right)\rightarrow\Swd\left(\RR^{d_y}\right)$ both by $\mathcal{F}_{y\xi}$ and their inverses by
$\mathcal{F}_{y\xi}^{-1}$: with this we can define a class of Fourier integral operators.
In fact, if a phase function satisfies one of the assumptions of the Lemma \ref{lem:extend}, we can compose the corresponding operator (either from the left or from the right side) with another operator mapping $\Sw\rightarrow\Swd$. In particular, if it is regular we can compose from both sides. Alternatively, we can compose it with the Fourier transform. In fact, we can transpose with any operator $B$ mapping $\Swd\rightarrow\Swd$ and $\Sw\rightarrow\Sw$ continuously. The following Lemma will provide us with a large class of such operators.
\begin{lemma}
\label{lemma:fio}
Let $\phi\in\SG^{1,1}(\RR^d\times\RR^d)$ be real-valued, satisfying 
$$
\langle \nabla_\xi\phi(x,\xi)\rangle\gtrsim\langle x\rangle 
\text{ and } 
\langle \nabla_x\phi(x,\xi)\rangle\gtrsim\langle \xi\rangle.
$$
Then, for any $\SG$-symbol $b$ we can define an operator
$$
	B_{\xi x}\colon\Sw(\RR^d)\rightarrow\Sw(\RR^d)\colon f\mapsto I_\phi(af),
$$
which has a continuous extension mapping $\Swd(\RR^d)\rightarrow\Swd(\RR^d)$, by duality. In fact, if $\phi$ is a phase function, this operator is identical with the one defined in \eqref{eq:Adef}.
\end{lemma}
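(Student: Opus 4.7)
First I would verify that $\phi$ is an admissible $\SG$-phase function of order $(1,1)$ in the sense of Definition \ref{def:phase}, and that, in fact, $M_\phi=\emptyset$. The two hypotheses give, for $|x|+|\xi|$ large enough, $|\nabla_\xi\phi|^2\gtrsim\langle x\rangle^2$ and $|\nabla_x\phi|^2\gtrsim\langle \xi\rangle^2$, hence
\[
\eta(x,\xi)=\langle x\rangle^2|\nabla_x\phi|^2+\langle\xi\rangle^2|\nabla_\xi\phi|^2\gtrsim\langle x\rangle^2\langle\xi\rangle^2,
\]
so $\phi$ satisfies \eqref{eq:phaseineq}. Moreover, the $\SG^{2,0}$-symbol $|\nabla_\xi\phi|^2$ is globally $\SG$-elliptic, so at every boundary point $(p,\omega)\in\partial(\BB^d\times\BB^d)$ the ellipticity condition \eqref{eq:ellip} holds, i.e., $M_\phi=\emptyset$; by symmetry of the hypotheses, the analogous statement holds after interchanging the roles of $x$ and $\xi$.

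Next, I would show that $B_{\xi x}\colon\Sw(\RR^d)\to\Sw(\RR^d)$ is well-defined and continuous. For $a\in\SGmm$ and $f\in\Sw(\RR^d_\xi)$ the product $a(x,\xi)f(\xi)$ lies in $\SG^{m,-\infty}(\RR^d\times\RR^d)$, with continuous dependence on $f$ in all relevant $\SG$-seminorms (this is a standard Leibniz estimate using the rapid $\xi$-decay of $f$). By Theorem \ref{thm:oscidef} the oscillatory integral $I_\phi(af)$ is a well-defined element of $\Swd(\RR^d_x)$, and since $M_\phi=\emptyset$ we have $\Csp(af)\cap M_\phi=\emptyset$ a fortiori. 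Proposition \ref{prop:css} then yields $I_\phi(af)\in\Sw(\RR^d_x)$, and continuity $\Sw\to\Sw$ follows from Theorem \ref{thm:oscidef}(3) composed with the continuity of the multiplication map $f\mapsto af$.

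The continuous extension to $\Swd(\RR^d)\to\Swd(\RR^d)$ is obtained by duality, in the same style as the second half of Lemma \ref{lem:extend}. Interchanging the roles of $x$ and $\xi$, the formal transpose $\,{}^t\!B_{\xi x}$ has kernel $e^{i\phi(x,\xi)}a(x,\xi)$ regarded as an oscillatory integral over $x$ with base $\xi$; the hypothesis $\langle\nabla_x\phi\rangle\gtrsim\langle\xi\rangle$ is exactly the condition that is needed, repeating the previous paragraph with $x$ and $\xi$ swapped, to conclude that $\,{}^t\!B_{\xi x}$ maps $\Sw(\RR^d_x)\to\Sw(\RR^d_\xi)$ continuously. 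One then sets $\langle B_{\xi x}(u),g\rangle:=\langle u,{}^t\!B_{\xi x}g\rangle$ for $u\in\Swd(\RR^d_\xi)$, $g\in\Sw(\RR^d_x)$, which defines the desired continuous extension, consistent with the original map on $\Sw$ by the density of $\Sw$ in $\Swd$.

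Finally, the coincidence with \eqref{eq:Adef} when $\phi$ is a phase function is immediate: since the defining expression $B_{\xi x}(f)=I_\phi(af)$ is literally of the form $I_{\varphi_{y\xi}}(a_{y\xi}f)$, after the obvious relabeling of variables and amplitudes, the two operators agree pointwise on $\Sw$, and therefore everywhere by continuity. The main obstacle I anticipate is precisely the first paragraph: one must ensure that the gradient lower bounds do give \emph{global} $\SG$-ellipticity of both $|\nabla_\xi\phi|^2$ and $|\nabla_x\phi|^2$, including at the ``corner'' boundary components where one of the variables stays bounded while the other tends to infinity, because this is what makes $M_\phi$ and its transposed analogue empty and thereby allows the duality argument to close.
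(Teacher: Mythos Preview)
Your claim that $M_\phi=\emptyset$ is false, and this is precisely the obstacle you flag in your last paragraph. Take $\phi(x,\xi)=x\cdot\xi$: then $\nabla_\xi\phi=x$, so $|\nabla_\xi\phi(0,\xi)|^2=0$ for every $\xi$, and hence $|\nabla_\xi\phi|^2$ is \emph{not} elliptic at any boundary point $(0,\omega)\in\RR^d\times\SSS^{d-1}$; thus $(0,\omega)\in M_\phi$. The hypothesis $\langle\nabla_\xi\phi\rangle\gtrsim\langle x\rangle$ uses the Japanese bracket, so it gives no information at bounded $x$ (where $\langle x\rangle$ is itself bounded below by $1$). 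Consequently Proposition~\ref{prop:css} does not apply as you use it: $\Csp(af)\cap M_\phi$ can be nonempty.

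Your route can be repaired. What the hypothesis \emph{does} give is ellipticity of $|\nabla_\xi\phi|^2$ at every point of $\SSS^{d-1}\times\BB^d$, since for $|x|$ large one genuinely has $|\nabla_\xi\phi|^2\gtrsim\langle x\rangle^2$. Hence $\pi_1(M_\phi)\subset\RR^d$, and Corollary~\ref{cor:oscicss} rules out asymptotic singularities of $I_\phi(af)$. Smoothness at finite points is free because $af\in\SG^{m,-\infty}$ makes the $\xi$-integral absolutely convergent together with all $x$-derivatives. Combining these gives $\Css(I_\phi(af))=\emptyset$, i.e.\ $I_\phi(af)\in\Sw$, and the rest of your duality argument goes through.

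The paper proceeds differently and more directly: rather than invoking $M_\phi$ at all, it introduces the regularizing operator
\[
V=\frac{1-\Delta_\xi}{\langle\nabla_\xi\phi\rangle^{2}-i\Delta_\xi\phi},
\]
which satisfies $Ve^{i\phi}=e^{i\phi}$ because $(1-\Delta_\xi)e^{i\phi}=(\langle\nabla_\xi\phi\rangle^{2}-i\Delta_\xi\phi)e^{i\phi}$. The denominator has modulus $\ge\langle\nabla_\xi\phi\rangle^{2}\gtrsim\langle x\rangle^{2}$ \emph{globally}: the Japanese bracket in the hypothesis is exactly what keeps it bounded away from zero at finite $x$. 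Each integration by parts with $V$ therefore gains a factor $\langle x\rangle^{-2}$ in the amplitude, while $\xi$-integrability is already ensured by $f\in\Sw$; iterating yields arbitrary $x$-decay, hence $I_\phi(af)\in\Sw$. The extension to $\Swd$ then follows by the same duality reasoning you give, using the symmetric hypothesis on $\nabla_x\phi$.
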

\begin{proof}
We repeat the analysis of Theorem \ref{thm:oscidef}, this time using the differential operator
\begin{equation*}
	V:=\frac{1-\Delta_\xi}{\langle \nabla_\xi\phi\rangle-i\Delta_\xi \phi}.
\end{equation*}
For the extension, we prove the statement for the adjoint of the operator, using symmetry of the assumptions on $\phi$. Then, duality yields the statement, as in the proof of Lemma \ref{lem:extend}.
\end{proof}
We call a function $\phi$ like the one in Lemma \ref{lemma:fio}, in analogy to the notation used in Andrews \cite{AndPhD} (where additional conditions are applied to the second order derivatives), a \emph{phase component}.
\begin{definition}
Let $\varphi_{y\xi}(y,\xi)\in\SGjj\left(\RR^{d_y},\RR^{d_\xi}\right)$, $\varphi_{\xi x}(\xi,x)\in\SGjj$
$\left(\RR^{d_\xi},\RR^{d_x}\right)$ be regular $\SG$-phase functions or phase components and $a_{y \xi}\in\SGjj\left(\RR^{d_y},\RR^{d_\xi}\right)$, $a_{\xi x}\in\SGjj\left(\RR^{d_\xi},\RR^{d_x}\right)$. Then a $\SG$-\emph{Fourier integral operator of composite type} $A$ is the map $A=A_{\xi x}\circ A_{y\xi}$.
\end{definition}
\begin{remark}
Formally, we thus have
\[
	Af(x)=\int_{\RR^{d_y+d_\xi}} e^{i(\varphi_{\xi x}(\xi,x)+\varphi_{y \xi}(y,\xi))}a(\xi,x)a(y,\xi)f(y)\ \dd y\dd \xi,
\]
and in the case of the Fourier transform, e.g., $\varphi_{y \xi}(y,\xi)=\mp y\cdot \xi$, $a(y,\xi)=1$.
We obtain ($\SG$-)pseudodifferential operators by choosing $A_{y\xi}=\mathcal{F}_{y\xi}$ and $\varphi_{\xi x}(x,\xi)=\xi\cdot x$, i.e. formally
\[
	Af(x)=\int_{\RR^{d_y+d_\xi}} e^{i(x-y)\xi}a(\xi,x)f(y)\ \dd y\dd \xi.
\]
\end{remark}
\begin{remark}
The transpose of a Fourier integral operator of such a class can thus be obtained via $^t(AB)= {^t\!B}\; {^t\!A}$.
\end{remark}
\begin{example}
Let $\varphi(\xi,x)$ be a phase function of order $(1,1)$ that satisfies (globally)
\begin{equation*}
	\langle \nabla_\xi\phi(x,\xi)\rangle\gtrsim\langle x\rangle 
	\text{ and } 
	\langle \nabla_x\phi(x,\xi)\rangle\gtrsim\langle \xi\rangle.
\end{equation*}
In particular, $\varphi$ satisfies both conditions of Lemma \ref{lem:extend}. If we set $A=A_{\xi x}\circ \mathcal{F}_{y \xi}$, we obtain the class of \emph{Type I operators} of \cite{cor}, that is, formally
\begin{equation*}
	Af(x)=\int e^{i\varphi(\xi,x)} a(\xi,x) \widehat{f}(\xi)\ \dd\xi.
\end{equation*}
\end{example}
With this, we can deduce, as in Lemma \ref{lem:extend} (cfr. {\cite[Theorem 4 and 5]{cor}}):
\begin{corollary}
Type I operators map $\Sw$ continuously into itself and can be (uniquely) extended to a continuous map of $\Swd$ into itself.
\end{corollary}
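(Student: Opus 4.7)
The plan is direct: combine Lemma \ref{lem:extend}, applied to the operator $A_{\xi x}$, with the fact that $\mathcal{F}_{y\xi}$ is a topological automorphism of both $\Sw$ and $\Swd$. First, I would observe that the two standing hypotheses on $\varphi$ are precisely what is needed for $\varphi$ to be a \emph{regular} $\SG$-phase function in the sense of Lemma \ref{lem:extend}. Indeed, with $\varphi\in\SG^{1,1}$, the bound $\langle\nabla_\xi\varphi\rangle\gtrsim\langle x\rangle$ (valid for $|x|+|\xi|$ large) gives $|\nabla_\xi\varphi|^2\gtrsim\langle x\rangle^{2}\langle\xi\rangle^{0}$, i.e.\ the ellipticity required in \eqref{eq:mphi} (so $M_\varphi=\emptyset$ with respect to $\xi$); the symmetric bound $\langle\nabla_x\varphi\rangle\gtrsim\langle\xi\rangle$ yields the dual ellipticity $|\nabla_x\varphi|^2\gtrsim\langle x\rangle^{0}\langle\xi\rangle^{2}$, that is, $M_\varphi=\emptyset$ with respect to $x$.

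By Lemma \ref{lem:extend}, the first ellipticity implies that $A_{\xi x}$ maps $\Sw(\RR^{d_\xi})$ continuously into $\Sw(\RR^{d_x})$, while the second one implies that it has a unique continuous extension $\Swd(\RR^{d_\xi})\to\Swd(\RR^{d_x})$. Since $\mathcal{F}_{y\xi}$ is a topological isomorphism of $\Sw(\RR^{d_y})$ onto $\Sw(\RR^{d_\xi})$ (with $d_y=d_\xi$), and likewise of $\Swd(\RR^{d_y})$ onto $\Swd(\RR^{d_\xi})$, composing on the right preserves both properties. Consequently $A=A_{\xi x}\circ\mathcal{F}_{y\xi}$ is continuous from $\Sw(\RR^{d_y})$ to $\Sw(\RR^{d_x})$, and extends uniquely, by continuity and the density of $\Sw$ in $\Swd$, to a continuous map $\Swd(\RR^{d_y})\to\Swd(\RR^{d_x})$.

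No serious obstacle is expected, since all analytic work has already been done upstream: the ellipticity bookkeeping needed to invoke Lemma \ref{lem:extend} is a matter of matching the orders dictated by $\varphi\in\SG^{1,1}$ against \eqref{eq:mphi}, and the rest is pure functional-analytic composition. If any subtlety were to arise, it would be in verifying that the extension of $A$ to $\Swd$ is indeed the duality-based one (equivalently, that $A$ agrees with its formal transpose construction on $\Sw$), but this is immediate from the uniqueness of continuous extension and from the characterization of $^{t}\!A$ already exploited in the proof of Lemma \ref{lem:extend}.
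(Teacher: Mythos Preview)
Your proposal is correct and follows essentially the same route the paper indicates: the Example preceding the Corollary already notes that the standing hypotheses on $\varphi$ are precisely the two conditions of Lemma \ref{lem:extend}, so that $A_{\xi x}$ is both $\Sw\to\Sw$ continuous and extends to $\Swd\to\Swd$; the paper then simply states that the Corollary follows ``as in Lemma \ref{lem:extend}''. Your write-up makes explicit the two ingredients the paper leaves implicit---the ellipticity bookkeeping matching $\varphi\in\SG^{1,1}$ to the hypotheses of Lemma \ref{lem:extend}, and the composition with the Fourier transform as a topological automorphism of $\Sw$ and $\Swd$---but the argument is the same.
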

If we choose two phase components, we obtain a \emph{Type $\mathcal{Q}$-Operator} with symbol $a_{y \xi}+a_{\xi x}$. \emph{Type $\mathcal{Q}$-Operators} were introduced in \cite{AndPhD}, where the phase components satified another non-degeneracy condition and the symbols were in all three variables. Under these assumption, it was proven that the calculus is closed under adjoints and composition. In the following example we will indicate an operator which is not of Type $\mathcal{Q}$ but treatable as an operator of composite type.
\begin{example}
\label{ex:kgsol}
The solution to the Klein-Gordon-equation
\begin{align*}
\frac{1}{c^2}\frac{\partial^2 u}{\partial t^2}(t,x) + (-\Delta_x+m^2) u(t,x)&=0 \quad \mathrm{for} \quad (t,x) \in \mathbb{R}^+ \times \mathbb{R}^3,\\
u(0,x) = 0, \quad \frac{\partial u}{\partial t}(0,x) &= f(x) \quad \mathrm{for} \quad f \in \mathcal{S}(\mathbb{R}^n),
\end{align*}
with $\omega(\xi)=\sqrt{m^2+\xi^2}$ as in Definition \ref{def:tpf}, can be written as
\begin{equation}
u(t,x) = \frac{1}{(2 \pi)^n} \int \frac{e^{i (\langle x,\xi \rangle + c t \omega( \xi ))}}{2 i \omega(\xi)} \hat f (\xi) \, \dd \xi - \frac{1}{(2 \pi)^n} \int \frac{e^{i (\langle x,\xi \rangle - c t \omega( \xi ))}}{2 i \omega(\xi)} \hat f (\xi) \, \dd \xi
\label{eq:kgsol}
\end{equation}
This can be understood as the application of a FIO of composite type to $f$.
\end{example}
\subsection{$\SG$-Fourier integral operators and wave front sets}
Lemma \ref{lem:extend} provided us with the means of continuing the operator $A_{y\xi}$ to $\Swd$ by assuming the ellipticity of $\nabla_\xi\varphi$. This always granted the existence of the composition of the involved kernels as rapidly decaying functions. But, as we know from the general theory, under certain assumptions on the wave front set, the composition can be defined even on distributions. In fact, for the tempered case we have, see \cite{melskript} and \cite{mel}:
\begin{theorem}
\label{thm:distpairing}
Let $T,\ S\in\Swd(\RR^d)$. Then, if $(x,p)\in\WF(T)\cap(\RR^d\times\SSS^{d-1})\Rightarrow (x,-p)\notin\WF(T)$,
their pairing $\langle T,\, S\rangle$ is unambiguously defined.
\end{theorem}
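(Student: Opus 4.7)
The plan is to define the pairing via Parseval's formula, thereby reducing the question to integrability of the product of Fourier transforms, and to handle the global behavior using a partition of unity on the compactification $\BB^d$. I will read the hypothesis as the usual non-conflict condition (the second occurrence of $\WF(T)$ being a typo for $\WF(S)$), extended naturally to all of $\BB^d\times\BB^d$: for every $(x,p)\in\WF(T)$ one has $(x,-p)\notin\WF(S)$.

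First I would write, formally,
\[
\langle T,S\rangle \;=\; (2\pi)^{-d}\,\langle \widehat T,\widehat{\check S}\rangle,\qquad \check S(x):=S(-x),
\]
and reinterpret the hypothesis via Proposition \ref{pr:wfeign} and the Fourier symmetry \eqref{eq:wffwf}. The condition $(x,p)\in\WF(T)\Rightarrow(x,-p)\notin\WF(S)$ becomes, after Fourier transform, the statement that for every $\xi\in\BB^d$, at least one of $\widehat T$ and $\widehat{\check S}$ is rapidly decaying along the relevant direction at $\xi$, in the sense of Definition \ref{def:css}.

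Second I would pick a locally finite partition of unity $\{\chi_j\}$ on $\BB^d$ subordinate to a cover by small open sets of the two types appearing in Section \ref{sec:prel} (bounded open sets of $\RR^d$ together with asymptotic cones $U_{V,R}$), and decompose $\widehat T=\sum_j \chi_j\widehat T$ and $\widehat{\check S}=\sum_k \chi_k\widehat{\check S}$. For each pair $(j,k)$, either the closures of the supports of $\chi_j$ and $\chi_k$ are disjoint in $\BB^d$, in which case the product is compactly supported in $\RR^d$, or they meet, in which case by construction one of the two factors has the direction in question as a non-singular direction, so that factor belongs to $\Sw(\RR^d)$ after multiplication by the appropriate $\chi$. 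In either case $\chi_j\widehat T\cdot\chi_k\widehat{\check S}\in L^1(\RR^d)$, with estimates summable in $(j,k)$; this is what defines
\[
\langle T,S\rangle \;:=\; (2\pi)^{-d}\sum_{j,k}\int \chi_j(\xi)\,\chi_k(\xi)\,\widehat T(\xi)\,\widehat{\check S}(\xi)\,d\xi.
\]

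The main obstacle will be verifying that the definition is unambiguous, i.e.\ independent of the chosen partition and consistent with the pairing when one of $T,S$ lies in $\Sw(\RR^d)$. The routine part of this — refining two partitions to a common one — reduces the task to bilinearity and to the consistency check on Schwartz elements, for which everything collapses to the ordinary integral. The subtle part, which distinguishes the global situation from the classical local one in \cite[Theorem 8.2.10]{hoerm1}, is that integrability at infinity of $\chi_j\widehat T\cdot\chi_k\widehat{\check S}$ must be controlled uniformly across the three components of $\partial(\BB^d\times\BB^d)$. This is exactly what the global wave front formalism of Definition \ref{def:css} is designed to provide: the no-conflict hypothesis, read on all of $\BB^d\times\BB^d$, rules out simultaneous asymptotic blow-up of $T$ and $S$ in incompatible directions, which is precisely what is needed to close the argument.
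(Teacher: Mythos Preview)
The paper does not actually prove Theorem~\ref{thm:distpairing}; it is quoted as a known result from Melrose, with the reference ``see \cite{melskript} and \cite{mel}'' immediately preceding the statement. So there is no in-paper proof to compare against, and your proposal has to be judged on its own merits.

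You are right that the second occurrence of $\WF(T)$ in the hypothesis is a misprint for $\WF(S)$, and that the condition must be read on all of $\partial(\BB^d\times\BB^d)$.

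However, your argument has a genuine gap. After passing to the Fourier side you introduce a partition of unity in the single variable $\xi\in\BB^d$ and claim that, whenever $\chi_j$ and $\chi_k$ have overlapping supports, one of $\chi_j\widehat T$, $\chi_k\widehat{\check S}$ lies in $\Sw$. That amounts to asserting
\[
\Css(\widehat T)\cap\Css(\widehat{\check S})=\emptyset,
\]
which, by \eqref{eq:wffwf} and \eqref{eq:prwf}, is the condition $\pi_2(\WF(T))\cap\big(-\pi_2(\WF(S))\big)=\emptyset$. This is strictly stronger than the actual hypothesis, which couples the $x$- and $p$-components. For a concrete obstruction, take $x_0\neq x_1$ in $\RR^d$ and directions with $p_0=-q_1$, and suppose
\[
\WF(T)\supset\{(x_0,p_0),(x_1,p_1)\},\qquad \WF(S)\supset\{(x_0,q_0),(x_1,q_1)\},
\]
with $q_0\neq -p_0$ and $q_1\neq -p_1$. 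The non-conflict hypothesis is satisfied, yet $p_0=-q_1$ forces $p_0\in\Css(\widehat T)\cap\Css(\widehat{\check S})$, so near $\xi=p_0$ neither factor need be Schwartz after your cut-off, and the claimed $L^1$-bound on $\chi_j\widehat T\cdot\chi_k\widehat{\check S}$ is unjustified.

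The missing idea is that the localisation must be genuinely microlocal, i.e.\ in both variables at once. In the classical case (\cite[Theorem~8.2.10]{hoerm1}) one first cuts in $x$ and then analyses directions in $\xi$; in the global setting of \cite{mel,melskript} one works with cut-offs on $\BB^d\times\BB^d$ (equivalently, with $\SG^{0,0}$ symbols microlocalising near points of $\partial(\BB^d\times\BB^d)$), so that on each piece the wave-front hypothesis forces one factor to be microlocally in $\Sw$ at the relevant point of $\partial(\BB^d\times\BB^d)$, not merely at a point of $\BB^d$. Reworking your argument with such a two-variable decomposition (or, equivalently, following the $\SG$-pseudodifferential characterisation of $\WF$ as in \cite{coma,cordes}) is what is needed to close the proof.
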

\noindent
With this we prove the following
\begin{theorem}
Let $\varphi_{y\xi}(y,\xi)\in\SGjj\left(\RR^{d_y},\RR^{d_\xi}\right)$ be a $\SG$-phase function, $a_{y \xi}\in\SGjj\left(\RR^{d_y},\RR^{d_\xi}\right)$. Then, the operator $A_{y\xi}$ can be continuously extended to
\begin{align*}
\{T\in\Swd\left(\RR^{d_y}\right)|\, (x,p)\in\WF(T)\cap(\RR^d\times\SSS^{d-1})\Rightarrow (x,-p)&\notin\SP_{\varphi_{\xi y}}\}
\\
&\rightarrow \Swd\left(\RR^{d_\xi}\right)\!.
\end{align*}
\end{theorem}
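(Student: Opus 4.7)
The natural approach is to define the extended operator by duality. For $T$ in the specified subset of $\Swd(\RR^{d_y})$ and $f\in\Sw(\RR^{d_\xi})$, the plan is to set
\[
    \langle A_{y\xi}T,\, f\rangle := \langle T,\, {}^tA_{y\xi}f\rangle,
\]
and interpret the right-hand side as the distributional pairing whose existence is granted by Theorem \ref{thm:distpairing}. The argument then consists of three steps: identifying the transpose, verifying the wave front compatibility condition, and establishing continuity.

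The first step is to compute the wave front set of ${}^tA_{y\xi}f$. By the computation used in the proof of Lemma \ref{lem:extend}, the transpose is itself a $\SG$-oscillatory integral: for $f\in\Sw(\RR^{d_\xi})$, the roles of the base and integration variables are interchanged, giving ${}^tA_{y\xi}f = I_{\varphi_{\xi y}}(a_{\xi y}f) \in \Swd(\RR^{d_y})$, and Theorem \ref{thm:osciwf} yields the (uniform in $f$) bound
\[
    \WF({}^tA_{y\xi}f) \subseteq \SP_{\varphi_{\xi y}}.
\]
The second step is then immediate: the hypothesis on $T$ is precisely the contrapositive of the condition $(y,p)\in\WF(T)\cap(\RR^{d_y}\times\SSS^{d_y-1}) \Rightarrow (y,-p) \notin \WF({}^tA_{y\xi}f)$, so Theorem \ref{thm:distpairing} applies and $\langle T,\, {}^tA_{y\xi}f\rangle$ is unambiguously defined.

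Finally, continuity in $f$ — which ensures $A_{y\xi}T \in \Swd(\RR^{d_\xi})$ — follows by combining the continuity of ${}^tA_{y\xi}\colon\Sw(\RR^{d_\xi})\to\Swd(\RR^{d_y})$ provided by Theorem \ref{thm:oscidef} with the continuity of the wave front pairing under the fixed bound above. Continuity of $T\mapsto A_{y\xi}T$ on its domain then follows by a symmetric argument, after equipping the source space with the natural H\"ormander-type topology that combines $\Swd$-convergence with a uniform asymptotic wave front constraint compatible with $\SP_{\varphi_{\xi y}}$; agreement with the original $A_{y\xi}$ on $\Sw(\RR^{d_y})$ is automatic since in that case the pairing reduces to the $\Swd$--$\Sw$ duality. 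The main technical obstacle is the careful set-up of this global wave front pairing and its continuity, in the present setting where the wave front set lives on $\partial(\BB^d\times\BB^d)$ rather than on the classical cone bundle; this is essentially a global analogue of H\"ormander's extension theorem adapted to the framework developed in the preceding sections.
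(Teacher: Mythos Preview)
Your proposal is correct and follows essentially the same route as the paper: define the extension by duality via $\langle A_{y\xi}T,\,f\rangle=\langle T,\,{}^tA_{y\xi}f\rangle$, identify ${}^tA_{y\xi}f=I_{\varphi_{\xi y}}(a_{\xi y}f)$ as in Lemma~\ref{lem:extend}, bound its wave front set by $\SP_{\varphi_{\xi y}}$ using Theorem~\ref{thm:osciwf}, and invoke Theorem~\ref{thm:distpairing}. Your discussion of continuity (in $f$ and in $T$) is more explicit than the paper's, which simply records that the pairing is well-defined and leaves the continuity implicit.
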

\begin{proof}
Let $f\in\Sw\!\left(\RR^{d_\xi}\right)$, $T\in\Swd(\RR^d)$. We define
\begin{equation}
	\label{eq:FIOdef}
	\langle A_{y \xi}T,\, f\rangle=\langle T,\, {^t\!A_{y \xi}}f\rangle=\langle T,\, I_{\varphi_{\xi y}}\!\left(a_{\xi y}f\right)\rangle. 
\end{equation}
Since, by Theorem \ref{thm:osciwf},
\[
	\WF(I_{\varphi_{\xi y}}\!\left(a_{\xi y}f\right))\subset \SP_{\varphi_{\xi y}} \, ,
\]
Theorem \ref{thm:distpairing} allows to conclude that \eqref{eq:FIOdef} gives indeed a well-defined operator, in terms of
pairing of distributions.
\end{proof}

\begin{remark}
Indeed, combining this result with Theorem \ref{thm:tpfwf} allows us to extend the operator defined in Example \ref{ex:kgsol}.
\end{remark}

%\nocite{mthesis}

\bibliographystyle{amsalpha}
%\addcontentsline{toc}{section}{References}
%\bibliography{biblio}

\begin{thebibliography}{Zah11}

\bibitem[And04]{AndPhD} G.D. Andrews,
\emph{A Closed Class of $\SG$ Fourier Integral Operators with Applications}.
PhD thesis, Imperial College, London (2004).

\bibitem[AF78]{AF78} K. Asada, D. Fujiwara,
\textit{On some oscillatory integral transformations in $L^2(\RR^n)$.}
Japan. J. Math. (N.S.) \textbf{4} (1978), 299--361.

\bibitem[BC11]{BaCo1} {U. Battisti, S. Coriasco}, 
\emph{Wodzicki Residue for Operators on Manifolds with Cylindrical Ends}. 
{Ann. Global Anal. Geom.} \textbf{40}, 2 (2011), 223--249.

\bibitem[CaRo06]{CaRo} M. Cappiello, L. Rodino, 
\emph{$\SG$-pseudodifferential operators and Gelfand-Shilov spaces}. 
{Rocky Mountain J. Math.} \textbf{36}, 4 (2006), 1117--1148.

\bibitem[CNR10]{CNR10} {E. Cordero, F. Nicola, L. Rodino},
\emph{On the global boundedness of Fourier integral operators}.
Ann. Global Anal. Geom. \textbf{38}, 4 (2010), 373--398. 

\bibitem[CNR09]{CNR07} E.~Cordero, F.~Nicola, L.~Rodino,
 {\it  Boundedness of Fourier integral Operators on $\mathcal{F} L^p$ spaces}. 
    Trans. Amer. Math. Soc.  {\bf 361}  (2009), 6049--6071.    

\bibitem[Cor95]{cordes}
H. O. Cordes, \emph{The Technique of Pseudodifferential Operators.} 
Cambridge Univ. Press, 1995.

\bibitem[Cor99a]{cor}
S.~Coriasco, \emph{Fourier integral operators in $\SG$ classes I}. 
Rend. Sem. Mat. Univ. Pol. Torino \textbf{57}, 4 (1999), 249–302.

\bibitem[Cor99b]{Co99a} \bysame,
 {\it Fourier integral operators in SG classes. II. Application to SG hyperbolic Cauchy problems}.
Ann. Univ. Ferrara, Sez. VII, Sc. Mat.  {\bf 44}  (1998), 81--122. 

\bibitem[CJT09]{CJT2} S. Coriasco, K. Johansson, J. Toft, 
\emph{Global Wave Front Set of Modulation Space types}.
Preprint, in arXiv:0912.3366 (2009).

\bibitem[CJT12a]{CJT1} \bysame, 
\emph{Local wave front sets of Banach and Fr\'echet types, and pseudodifferential
operators}. 
Appeared online in {Monatsh. Math.} (2012), DOI 10.1007/s00605-012-0392-y.

\bibitem[CJT12b]{CJT3} \bysame, 
\emph{Propagation properties of global wave front sets of Banach and Fr\'echet types for evolution operators}. 
In preparation.

\bibitem[CM03]{coma}
S.~Coriasco, L.~Maniccia, 
\emph{{Wave front set at infinity and hyperbolic linear operators with multiple characteristics.}} 
Ann. Global Anal. Geom. \textbf{24}, 4 (2003), 375--400.

\bibitem[CM12]{CoMa12}
\bysame,
\emph{On the Spectral Asymptotics of Operators on Manifolds with Ends}.
Preprint, in arXiv:1202.2846 (2012)

\bibitem[CR12]{CR08} S.~Coriasco, M.~Ruzhansky,
{\it Global $L^p$ continuity of Fourier integral operators}.
To appear in Trans. Amer. Math. Soc. (2012).

\bibitem[Dui96]{Du96}
J.~J. Duistermaat,
\newblock {\em Fourier Integral Operators}.
\newblock Birkh\"{a}user, Boston, 1996.

\bibitem[DH72]{DuHo72}
J.~Duistermaat, L.~H\"{o}rmander,
\newblock {Fourier Integral Operators II}.
\newblock {\em Acta Math.}, \textbf{128} (1972), 183--269.

\bibitem[ES97]{ES} Y. Egorov, B.-W. Schulze, 
\emph{Pseudo-Differential Operators, Singularities, Applications.} 
Birkh{\"a}user, 1997.

\bibitem[Esk70]{Es70} G.~I.~Eskin,
  {\it Degenerate elliptic pseudodifferential operators of principal type}. 
  Math. USSR Sbornik {\bf 11} (1970), 539--585.

\bibitem[GS94]{grsj} A.~Grigis, J.~Sjöstrand, 
\emph{{Microlocal analysis for differential operators}}. 
Cambridge University Press, 1994.

\bibitem[H\"or71]{Ho71}
L.~H\"{o}rmander,
\newblock {Fourier Integral Operators I}.
\newblock {\em Acta Math.}, {\bf 127} (1971), 79--183.

\bibitem[Hör03]{hoerm1} \bysame, 
\emph{{The analysis of linear partial differential operators}}.
Classics in mathematics, Springer, Berlin, 2003.

\bibitem[Mel94]{mel} R.~Melrose, 
\emph{{Spectral and scattering theory for the Laplacian on asymptotically Euclidean spaces}}.
Spectral and scattering theory (Sanda, 1992), 85–130, Lecture Notes in Pure and Appl. Math., 
161, Dekker, New York, 1994.

\bibitem[Mel04]{melskript}
\bysame, \emph{{Lecture Notes for Graduate Analysis}}, cited 2012.
Available online at http://www.core.org.cn/NR/rdonlyres/Mathematics/18-155Fall-2004/734BCA72-5878-49BB-BE17-D45DCBDE423A/0/lecture\_notes.pdf, Fall 2004.

\bibitem[NR10]{rodino2} F. Nicola, L. Rodino, 
\emph{{Global pseudodifferential calculus on Euclidean spaces.}} 
 Birkh\"auser, Basel, 2010.

\bibitem[Par72]{Parenti} C. Parenti, 
\emph{Operatori pseudodifferenziali in $\RR^n$ e applicazioni}.
{Ann. Mat. Pura Appl.} \textbf{93} (1972), 359--389.

\bibitem[PTT08]{PTT1} {S. Pilipovi\'c, N. Teofanov, J. Toft},
\emph{Micro-local analysis in Fourier Lebesgue and modulation spaces. Part II}. 
Preprint, in arXiv:0804.1730 (2008).

\bibitem[RS75]{rs2} M. Reed, B. Simon, 
\emph{{Methods of modern mathematical physics II}}.
Acad. Press, New York, 1975.

\bibitem[RS06]{RS06a} M.~Ruzhansky, M.~Sugimoto,  
\textit{Global $L^2$ boundedness theorems for a class of Fourier integral operators.} 
Comm. Partial Differential Equations \textbf{31} (2006), 547--569. 

\bibitem[Sch86]{Sc} E. Schrohe, \emph{Spaces of weighted Symbols and Weighted
Sobolev Spaces on Manifolds}. In H. O. Cordes, B. Gramsch, and
H. Widom, editors, Proceedings, Oberwolfach, number 1256 in Springer
LMN, New York, pages 360-377, 1986.

\bibitem[Sch11]{mthesis} R. Schulz, 
\emph{{Produkte temperierter Distributionen}}.
Master's thesis, Georg-August-Universität Göttingen, 2011.

\bibitem[SSS91]{SSS91} A. Seeger, C.D. Sogge, E.M. Stein, 
\textit{Regularity properties of Fourier integral operators.}
Ann. of Math. \textbf{134} (1991), 231--251.

\bibitem[Sog93]{So93} C.D. Sogge, 
\textit{Fourier integrals in classical analysis.} 
Cambridge University Press, 1993.

\bibitem[Zah11]{zahn} J. Zahn, 
\emph{{The wave front set of oscillatory integrals with inhomogeneous phase function}}.
J. Pseudo-Differ. Oper. Appl. \textbf{2} (2011), 101-113.

\end{thebibliography}

\providecommand{\bysame}{\leavevmode\hbox to3em{\hrulefill}\thinspace}
\providecommand{\MR}{\relax\ifhmode\unskip\space\fi MR }
% \MRhref is called by the amsart/book/proc definition of \MR.
\providecommand{\MRhref}[2]{%
  \href{http://www.ams.org/mathscinet-getitem?mr=#1}{#2}
}
\providecommand{\href}[2]{#2}

\end{document}